\def\ps@headings{%
  \def\@evenhead{\hfil\scriptsize\normalfont\leftmark}%
  \def\@oddhead{\scriptsize\normalfont\rightmark\hfil}%
  \def\@evenfoot{\hfil\thepage\hfil}%
  \def\@oddfoot{\hfil\thepage\hfil}%
}
\numberwithin{equation}{section}
\newtheorem{thm}{Theorem}[]
\newtheorem{lemma}{Lemma}[section]
\newtheorem{notation}{Notation}[section]
\theoremstyle{remark}
\newtheorem{remark}{Remark}
\theoremstyle{definition}
\newcommand{\smc}{\sum\cdots\sum}
\newcommand{\sumstar}{\sideset{}{^*}\sum}
\newcommand{\barr}{\overline}
\newcommand{\nn}{\nonumber}
\newcommand{\Fm}{\mathfrak{m}}
\newcommand{\BC}{{\mathbb {C}}}
 \newcommand{\BN}{{\mathbb {N}}}
 \newcommand{\bR}{{\mathbb {R}}}
 \newcommand{\BZ}{{\mathbb {Z}}}
\newcommand{\CC}{{\mathcal {C}}}
 \newcommand{\CH}{{\mathcal {H}}}
\newcommand{\CI}{{\mathcal {I}}} 
\newcommand{\CM}{{\mathcal {M}}} \newcommand{\CN}{{\mathcal {N}}}
\newcommand{\cC}{{\mathcal {C}}}
 \newcommand{\CX}{{\mathcal {X}}}
 \newcommand{\GL}{{\mathrm{GL}}}
\newcommand{\fm}{{\mathfrak{m}}}
\newcommand{\fc}{{\mathfrak{c}}} \newcommand{\fd}{{\mathfrak{d}}}
\renewcommand{\Re}{{\ \mathrm{Re}\ }}
\newcommand{\SL}{{\mathrm{SL}}}
\title{On shifted convolution sums of $\GL(3)$-Fourier coefficients with an average over shifts}
 \author{Ritwik Pal}
\address{Indraprastha Institute of Information Technology, Delhi, India}
\email{ritwik.1729@gmail.com}
 \author{Sampurna Pal}
\address{Theoretical Statistics and Mathematics Unit, Indian Statistical Institute, Kolkata, WB, India}
\email{sampu.andul@gmail.com}
\begin{document}

\begin{abstract}
 Let $F$ be a Hecke-Maass cusp form for $\SL_3(\mathbb{Z})$ and $A(m,n)$ be its normalized Fourier coefficients. Let $V$ be a smooth function, compactly supported on $[1,2]$ and satisfying $V(y)^{j} \ll_j y^{-j}$ for any $j \in \mathbb{N} \cup \{0\}$. In this article we prove a power-saving upper bound for the `average' shifted convolution sum 
\begin{equation*}\label{MainEquation}
\sum_{h}\sum_{n}A(1,n)A(1,n+h)V\left(\frac{n}{N}\right)V\left(\frac{h}{H}\right),
\end{equation*}
for the range $N^{1/2-\varepsilon} \geq H \geq N^{1/6+ \varepsilon}$, for any $\varepsilon >0$. This is an improvement over the previously known range $N^{1/2-\varepsilon} \geq H \geq N^{1/4+ \varepsilon}$.
\end{abstract}
\subjclass[2010]{11F66 (primary), 11M41 (secondary)}
\keywords{Shifted convolution sum, Delta method, $L$-functions}
\maketitle
\section{Introduction}
Convoluted summation of two distinct sequences has been a long, well studied theme in Analytic Number theory. One of the avenues in this theme is the study of shifted convolution sum problems
\begin{equation*}
    \sum_{n \leq N} a(n)b(n+h),
\end{equation*}
 for two arithmetic functions $a(n)$ and $b(n)$. When the sequences come from the coefficients of automorphic $L$-functions, its importance appears due to its implications on to the problems on equidistribution (QUE), sub-convexity problems of the associated $L$-functions (see    \cite{Blomer_shifted_IMRN}, \cite{blomer_spectral_2008}, \cite{Harcos_add}, \cite{holowinsky_sieve_2009},  \cite{kowalski_rankin-selberg_2002}, \cite{luo_mass_2003}, \cite{michel_subconvexity_2004}, \cite{sarnak_estimates_2001}) and etcetera. The study of shifted convolution sums on automorphic coefficients was started in (see \cite{ingham1927}) for the sequence $d(n)d(n+h)$, giving its asymptotic expression. Later, subsequent improvements on its error term was made in \cite{Estermann1931}, \cite{heathbrownfourth} and \cite{Des_iwa_additivediv}. The article of \cite{Des_iwa_additivediv} was first to invoke the spectral theory of automorphic forms for shifted convolution sums.
 For a holomorphic modular form $f$, considering a `smooth-version' of shifted convolution sum
     \[\sum_{n}\lambda_f(n)\lambda_f(n+h)V\left(\frac{n}{N}\right),\]
    where $V$ is some smooth function, the first power-saving upper bound was shown in \cite{good_1983}.  W. Duke, J. Friedlander, H. Iwaniec \cite{duke_quadratic_1994} first took an alternate way through their `delta-method' approach. For shifted convolution sum of Maa\ss~ forms, M. Jutila in \cite{jutila_additive_1996_1}, \cite{jutila_additive_1997} proved upper bounds through some remarkable approaches using the spectral theory of automorphic forms with some crucial inputs from earlier works by H. Iwaniec. It is worth mentioning here that so far to treat the shifted convolution sum problems for cuspidal automorphic coefficients, one considers either the spectral theory approach or the delta-method approach.   %

 Shifted convolution sum problems for $\GL(3) \times \GL(2)$ coefficients were considered in \cite{Pittd3n} and
     \cite{Munshi_shifted_duke}. For $\GL(3)\times \GL(3)$ shifted convolution sums it is a hard open problem to obtain cancellation only in the shifted convolution sum with a fixed shift $h$. So one considers an additional average over the shifts:
\begin{equation*}
    \sum_{h\leq H}\sum_{n \leq N} a_{\pi_1}(n) a_{\pi_2}(n+h),
\end{equation*}
where $\pi_1, \pi_2$ are two $\GL(3)$-forms. The aim is to show some non-trivial upper bounds for these sums while the size of $H$ is as small as possible in comparison to $N$. The first result in this direction was proved in \cite{Baier_Browning_Marasingha_Zhao_2012}. They
        proved asymptotic estimate with power-saving error term for the shifted convolution sum 
        $$\sum_{h\leq H}\sum_{N< n\leq 2N}d_3(n)d_3(n+h),$$
        for $N^{1/6+\varepsilon}\leq H \leq N^{1-\varepsilon}$.
        However, their use of results in moment of the Riemann zeta function is not available in the cuspidal case.

Let $F$ be a Hecke-Maass cusp form for $\SL_3(\mathbb{Z})$ and $A(m,n)$ be its normalized Fourier coefficients. In this article, we consider the `average' shifted convolution sum 
\begin{equation}\label{MainEquation}
    S':=\sum_{h}\sum_{n}A(1,n)A(1,n+h) V\left(\frac{n}{N}\right)V\left(\frac{h}{H}\right),
\end{equation}
 for some smooth function $V$ that is compactly supported on $[1,2]$ and satisfy $V(y)^{j} \ll_j y^{-j}$ for any $j \in \mathbb{N} \cup \{0\}$. Ramanujan-Petersson bound on average implies that it is trivially bounded by $(NH)^{1+\varepsilon}$. With application of delta method \cite{harun-Singh} proved some non-trivial cancellation when $H\geq N^{1/2+ \varepsilon}$. Thereafter, recently \cite{dasgupta2024secondmomentgl3standard} made a significant improvement and showed that a non-trivial upper bound for the range $H \geq N^{1/4+ \varepsilon}$. Our main aim for this article is to prove a non-trivial upper bound for even smaller $H$ with respect to $N$, i.e., $H \geq N^{1/6+ \varepsilon}$. 
\begin{thm}\label{maintheorem}
  Let $S'$ be as in \eqref{MainEquation}. Then for any $H\leq N^{1/2-\varepsilon}$, we have 
    \begin{align}
        \sum_{h}\sum_{n}A(1,n)A(1,n+h)V\left(\frac{h}{H}\right)V\left(\frac{n}{N}\right)\ll_{F,\varepsilon}  N^{\varepsilon}\left(N^{1/4}H^{5/2}+H^{5/8}N^{17/16}+N^{9/8}H^{1/4}\right).
    \end{align}
\end{thm}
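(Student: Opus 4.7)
The strategy is to start with a smooth Duke--Friedlander--Iwaniec delta method to detect the equation $m = n + h$, with conductor $Q \asymp \sqrt{N}$ (natural since $|m-n-h| \ll N$ on the support). This rewrites
\begin{equation*}
S' \;=\; \frac{1}{Q}\int \sum_{q \leq Q}\frac{g(q,x)}{q}\sum_{a\,(q)}^{\ast}\mathcal{N}(q,a,x)\,\mathcal{M}(q,a,x)\,\mathcal{H}(q,a,x)\,dx,
\end{equation*}
where $\mathcal{N},\mathcal{M},\mathcal{H}$ are the smoothed Fourier-weighted character sums over $n$, $m$, $h$ respectively, each carrying a factor $e_q(\pm a\cdot)$ and a mild smooth oscillation $e(\pm \cdot\, x/qQ)$.

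Next I dualize each sum by its natural transform. For $\mathcal{H}$, Poisson summation modulo $q$ converts the length-$H$ sum into a dual sum of length $q/H$ supported on frequencies $h^{\ast} \equiv -a \pmod{q}$; in particular the regime $q < H$ is forced to the zero frequency and contributes through Ramanujan-sum cancellation exploiting the extra $h$-average, while the bulk lives in $H \leq q \leq Q$. For $\mathcal{N}$ and $\mathcal{M}$ I apply $\GL(3)$ Voronoi summation, replacing each length-$N$ sum by a dual sum over divisor pairs $(n_1,n_2)$ with $n_1 n_2^2 \asymp q^3/N$, weighted by $A(n_2,n_1)/(n_1 n_2)$, a Kloosterman sum mod $q/n_1$, and a $\GL(3)$ Bessel-type kernel whose phase I read off via stationary phase.

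After these transforms the $a\pmod{q}$ sum collects the three Kloosterman factors into a single complete character sum modulo $q$ which evaluates essentially as a Ramanujan sum in the dual variables $h^{\ast}$, $n_1 n_2^2$, $m_1 m_2^2$. At this point one has a weighted multidimensional sum modulated by an oscillating phase. To extract further cancellation I apply Cauchy--Schwarz, pulling one of the $\GL(3)$ dual sums (say on the $m$-side) outside and bounding $S'$ by the square root of a smooth second moment in the remaining $\GL(3)$ variable. Opening the square and executing the inner arithmetic sum yields a diagonal contribution and an off-diagonal contribution; the off-diagonal is treated by a further Poisson summation in the surviving linear variable ($q$ or $h^{\ast}$), where the stationary-phase analysis of two superimposed $\GL(3)$ Bessel integrals brings out additional oscillation. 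I expect the three terms $N^{1/4}H^{5/2}$, $H^{5/8}N^{17/16}$, $N^{9/8}H^{1/4}$ in the stated bound to arise respectively from the diagonal of this second moment, the off-diagonal after the final Poisson, and the low-modulus/zero-frequency remnant from the initial $h$-Poisson.

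The principal obstacle is this final Cauchy--Schwarz plus Poisson step: the variables must be chosen so that the character sums remain tractable (no intractable hyper-Kloosterman structure survives) while still yielding enough cancellation in the new dual variable to compensate the Cauchy--Schwarz loss. This balance is precisely what fixes the cutoff at $H \geq N^{1/6+\varepsilon}$; pushing below requires new savings that appear not to be available within this framework. A secondary technical burden is the small-modulus range $q < H$, where the DFI weight $g(q,x)$ does not vanish and one must rely on the $h$-average to recover cancellation, and the need to track uniformly the stationary-phase asymptotics of the two $\GL(3)$ Bessel transforms across the ranges of $q$, $h^{\ast}$ and the dual Voronoi variables.
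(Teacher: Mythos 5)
Your high-level skeleton (DFI detector, Poisson in $h$, $\GL(3)$ Voronoi in $m$ and $n$, a Cauchy/duality step, Poisson in the remaining dual variable) matches the broad shape of the paper, but three load-bearing ideas are missing, and without them the argument cannot reach the exponent $1/6$ --- it stalls well before the stated bound.

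First, you fix the DFI conductor at $Q\asymp\sqrt N$. The paper uses Munshi's conductor-lowering and takes $C=\sqrt{N/K}$ with a free parameter $K\gg 1$, which shrinks the modulus range at the price of the extra analytic oscillation $e(x(m-n-h)/qC)$ of size $\sim K$. The final bound is obtained by balancing several competing terms in $K$ and setting $K=\sqrt N/H$; with $K$ frozen at $1$ there is no parameter to optimize and the terms of the theorem cannot even be reproduced.

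Second, and this is the paper's advertised novelty: after Voronoi the $x$-integral carries the phase $\frac{2\sqrt C(\sqrt n-\sqrt m)}{q\sqrt x}$, which is \emph{not} linear in $n,m$. The paper performs a substitution $x\mapsto y$ so that the phase becomes $e(y(n-m))$ and then separates the non-linear dependence $(\sqrt n+\sqrt m)^{-2}$ hidden in the weight via a Mellin--Barnes integral (Lemma \ref{linearizationlemma}, Appendix \ref{appen.lin}). This linearization is what lets the subsequent Poisson summation in the $\GL(3)$ dual variable be controlled by repeated integration by parts (first-derivative bound), saving the full factor $K$ rather than $\sqrt K$, and it also kills all analytic oscillation downstream. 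Your sketch instead keeps the $\GL(3)$ Bessel phase and proposes to ``read off'' its stationary phase; that is essentially the route of the prior works and loses a square root, which is exactly the obstacle the paper is designed to overcome.

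Third, a single Cauchy--Schwarz followed by one more Poisson is not enough. The paper explicitly notes (Remark in Section \ref{Sectionq_1nonzero}) that this yields at best $H\geq N^{1/5+\varepsilon}$. Reaching $N^{1/6+\varepsilon}$ requires the iterated ``Cauchy's inequality ad infinitum'' scheme of \cite{ALM}: repeatedly Cauchy in the $q$-type variable, Poisson, renormalize the character sum, and pass to the limit, gaining a geometric-series worth of savings $\big(C/(H^{3/2}\sqrt K)\big)^{\sum 2^{-j}}$. Your proposal contains no trace of this iteration, nor of the delicate coprimality bookkeeping (Notations \ref{firstnotation}, \ref{secondnotation}, \ref{notsplitmh1h2}) and the character-sum input (Lemma B.2 of \cite{ALM}) needed to make it run. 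As written, your proof would reproduce (roughly) the range of \cite{dasgupta2024secondmomentgl3standard} but not the improvement claimed by the theorem.

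A smaller inaccuracy: after the $a\bmod q$ sum is executed you assert the three Kloosterman factors collapse to a Ramanujan sum; in the paper the $a$-sum is absorbed into the shift variable $h$ (since $a\equiv h\bmod q$), and what survives from Voronoi are genuine Kloosterman sums $S(\bar h,\pm m_2;q/m_1)$, which are then squared via duality and only later, after Poisson in $m$, reduce by reciprocity to shorter exponential sums. The Ramanujan-sum structure you describe appears only in the zero-frequency cases.
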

\begin{remark}In particular, this bound is a power-saving improvement over the trivial bound $(NH)^{1+\varepsilon}$ for $H\geq  N^{1/6+\varepsilon}$. 
\end{remark}
Let us now briefly discuss the route of the methods of this article.
We begin by a well-known approach of separating the oscillations in (\ref{MainEquation}) by the delta method of Duke-Friedlander-Iwaniec along with the conductor lowering mechanism of Munshi to arrive at 
\begin{align}S'\asymp \int_{x\sim 1}\frac{1}{C}\sum_{q\sim C}\frac{1}{q}\ \sumstar_{a\bmod{q}} \sum_{h\sim H}\sum_{n\sim N}\sum_{m\sim N}A(n)A(m)e\left(\frac{a(m-n-h)}{q}\right)e\left(\frac{x(m-n-h)}{qC}\right),\label{Sjustafterdfi}\end{align}
which is trivially of the size $N^2H$.
We choose the conductor of the delta symbol expansion (Lemma \ref{delta}) to be $C=\sqrt{N/K}$, where $K\gg 1$, which introduces an extra analytic oscillation roughly of the size $K$ in terms of $e\left(\frac{x(m-n-h)}{qC}\right)$. Then we apply the Poisson summation formula on the $h$-sum and the Voronoi-type summation formula for $\SL(3,\mathbb{Z})$ on the $n$-sum and the $m$-sum to arrive at 
\begin{equation}\label{Safterdualiation}
 S'\asymp\frac{H}{K}\int\limits_{x\sim 1}\frac{1}{C^2}\sum_{q\sim C}\sum_{h\sim \frac{C}{H}}\sum_{n\sim  \frac{N^2}{C^3} }\sum_{m\sim  \frac{N^2}{C^3} }A(n)A(m)S(\overline h, n;q)S(\overline h, m;q) e\left(\frac{2\sqrt{C}(\sqrt{n}-\sqrt{m})}{q\sqrt{x}}\right).
\end{equation} At this point, $S'$ can be trivially bounded by $N^{3/2}K^{3/2}$. Comparing it with the trivial bound in (\ref{Sjustafterdfi}), we note that by dualization, we have saved $\frac{N^2H}{N^{3/2}K^{3/2}}\sim \frac{\sqrt{N}H}{K^{3/2}}.$

At this point, we intend to apply the duality principle as in \cite{ALM}, \cite{dasgupta2024secondmomentgl3standard} and \cite{pal2023secondmomentdegreelfunctions}  to interchange the order of the summations. 
But preceding that,  we will `linearize' the phase function $\frac{2\sqrt{Cn}}{q\sqrt{x}}$ in (\ref{Safterdualiation}), which can be considered a novelty of this paper. It has been one of the crucial inputs to get an extra-saving on the upper bound of $S'$. 

Looking closely at the $x$-integral
\begin{align}
\int_{x\sim 1}e\left(\frac{2\sqrt{C}(\sqrt{n}-\sqrt{m})}{q\sqrt{x}}\right) dx,\label{xintsketch}
\end{align}
we observe that this integral is negligibly small unless $$\sqrt{n}-\sqrt{m}\ll \sqrt{C} \iff |n-m|\ll \sqrt{C}\cdot \frac{N}{C^{3/2}}\sim CK. $$
In that range we can absorb the above term into the smooth function and `treat' it like $1$. So in principle, this integral acts like $\delta(|n-m|\ll CK)$ and we should be able to `replace' the above integral by 
\begin{align}CK\int_{y\sim \frac{1}{CK}}e((n-m)y).\label{yintsketch}\end{align}
The advantage of this linearization of the phase function is two-fold. Firstly, when we apply the Poisson summation formula on the $n$-sum after the application of the duality principle, we can estimate the resulting integral transform by the first derivative bound instead of stationary phase analysis. This results in a saving of the full size of the oscillation $K$ instead of its square-root. Secondly, treating the integral transform by the first derivative bound leaves no analytic oscillation for the subsequent steps, which reduces the overall conductor and makes the integral transforms arising from the Poisson summation formulas at the succeeding steps much simpler. 

However, significant challenge arises from the smooth function of the $y$-integral (\ref{yintsketch}), which originates from the smooth function ($g(q,x)$) of the $x$-integral (\ref{xintsketch}) as it involves the variables $m,n,q$ and $y$ in a non-separable manner. We treat this in Section \ref{subsec.xint} and Appendix \ref{appen.lin} through Mellin-Barnes formula and make it conducive for the application of the AM-GM inequality and the duality principle.  

After the linearization of the phase function and the application of the AM-GM inequality, we arrive at 
\begin{equation}
    S'\asymp\frac{H}{C}\int_{y\sim \frac{1}{CK}}\sum_{q\sim C}\sum_{h\sim C/H}\Big|\sum_{m\sim  N^2/C^3 }A(m)e(my)S(\overline h, m;q)\Big|^2.
\end{equation}
Then if we apply the duality principle, we can bound $S'$ by 
\begin{align}
    S'\ll \frac{H}{C} \Big(\sum_{m\sim N^2/C^3}|A(n)|^2\Big)\cdot \Delta\ll HK^2\Delta,
\end{align}
where \begin{align}\Delta=\sup_{||\alpha||_2=1}\sum_{m\sim  \frac{N^2}{C^3} }\Big|\int_{y\sim \frac{1}{CK}}\sum_{q\sim C}\sum_{h\sim \frac{C}{H}}\alpha(q,h,x)e(my)S(\overline h, m;q)\Big|^2.\label{Deltaatintrofirst}\end{align}
At this point, $\Delta$ can be trivially bounded by $N^2/HCK$ and $S'\ll \frac{N^2K}{C}$. We then open up the absolute square and apply the Poisson summation formula on the $m$-sum
\begin{align*}
    T_\fm=\sum_{m\sim N^2/C^3}S(\bar h_1, m;q_1)S(\bar h_2, m;q_2)e(m(y_1-y_2)).
\end{align*} Due to our linearization of the phase function, the resulting integral transform is of the form 
$$\int_{y\sim  \frac{1}{CK} }e(y(y_1-y_2)e\left(-\frac{my}{q_1q_2}\right)dy,$$
where the dual variable is also denoted by $m$. We treat it by repeated integration by parts (Lemma \ref{repeatedint}), which imply that the integral is negligibly small outside the range 
$$|C^2(y_1-y_2)-m|\ll C^5/N^2 \text{ and } |m|\ll C/K.$$
Inside this range, the $y$-integral can be treated as a mildly-oscillatory(i.e., $x^{j}V^j(x)\ll_{\varepsilon} N^{j\varepsilon}$) function, essentially making the variables $y_1$ and $y_2$ to be free. This condition can be considered as a restriction on the length of either the $y_1$-integral or the $y_2$-integral, instead of a restriction on the $m$-sum. At the end, we have 
\begin{align}
    T_\fm\asymp \frac{N^2}{C^3}\sum_{m\ll C/K} \CC\cdot  W\Big(\frac{C^2(y_1-y_2)-m}{C^5/N^2}\Big),
\end{align}
where $$\CC=\frac{1}{q_1q_2}\sum_{\beta \bmod q_1q_2}S(\bar h_1,\beta, q_1)S(\bar h_2, \beta, q_2)e_{q_1q_2}(m\beta).$$
At the zero frequency ($m=0$),
we compute that (see \ref{Deltaatintrofirst}) $\Delta \ll C$ and it leads to (see (\ref{Sdiagonal})) 
\begin{align}S'_{m=0}\ll HN^{1/2}K^{3/2},\label{S_sketch_m0}\end{align}
which is smaller than the trivial bound if $K\ll N^{1/3}$. 

For the nonzero frequencies ($m\neq 0$), by reciprocity as in \cite{pal2023secondmomentdegreelfunctions}, we evaluate that 
$$\CC\asymp e\left(\frac{q_1\bar q_2}{mh_2}\right)e\left(\frac{\bar q_1q_2}{mh_1}\right).$$
At this point, for $m\neq 0$, $S$ can be bounded above be $\frac{N^{3/2}}{K^{1/2}}$. Now applying Cauchy's inequality on $\Delta$, keeping only the $q_2$-sum inside, we bound $\Delta$ by 
\begin{align}
    \Delta\ll \frac{N^2}{C^3}S_0^{1/2}S_1^{1/2},\label{DeltaOneCauchy}
\end{align}
where 
\begin{align}
    S_0=&\int_{y_1}\sum_{h_1\sim \frac{C}{H}}\sum_{q_1\sim C}|\alpha_1|^2\sum_{h_2\sim \frac{C}{H}}\int_{y_2} \sum_{m\ll \frac{C}{K}} W\left(\frac{C^2(y_1-y_2)-m}{C^5/N^2}\right)
    \ll\frac{C}{HK^3},
\end{align}
and 
\begin{align}
    S_1=&\int_{y_1}\sum_{h_1\sim \frac{C}{H}}\sum_{h_2\sim \frac{C}{H}}\int_{y_2} \sum_{m\ll \frac{C}{K}} W\left(\frac{C^2(y_1-y_2)-m}{C^5/N^2}\right) \sum_{q_2\sim C}\alpha_2\sum_{q_3\sim C}\alpha_{3}\cdot S_q, \nonumber \\
    \text{where}~~~~~~S_q:=&\sum_{q_1\sim C} e\left(\frac{q_1(\bar q_2-\bar q_3)}{mh_2}\right)e\left(\frac{\bar q_1(q_2-q_3)}{mh_1}\right)
    \text{ and } S_q\ll \frac{C^3}{HK^3}.\nonumber
\end{align}
In the diagonal case $q_2=q_3$, we trivially bound $S_1\ll \frac{N}{K^4H}$. Hence, we bound $S$ by 
\begin{align}S'_{q_2=q_3}\ll \frac{N^{5/4}}{K^{1/4}}.\label{S_sketch_q2=q3} \end{align}
When $q_2\neq q_3$, we apply the Poisson summation formula on the $q_1$-sum $S_q$ and get 
\begin{align}
    S_q=\frac{C}{mh_1}\sum_{q_1^*\ll \frac{mh_1h_2}{C}}S((q_2-q_3), (\bar q_2-\bar q_3)h_1\bar h_2-q_1^*\bar{h_2};mh_1)\delta(\bar q_2-\bar q_3\equiv -q_1^*\overline{h_1}\bmod h_2)\nonumber,
\end{align}
where the dual variable is denoted by $q_1^*$. At the zero frequency $q_1^*=0$, the Kloosterman sum splits into a Kloosterman sum modulo $m$ and a Ramanujan sum modulo $h_1$. So we get $ S_q\ll \frac{C}{\sqrt{m}}\delta(q_2\equiv q_3\bmod h_1 h_2)$ and consequently, we can bound $S_1$ by 
$$S_1\ll \frac{C^3}{N^2}\cdot \frac{C^{1/2}}{K^{1/2}}\cdot C\cdot \frac{C}{H}\left(\frac{C}{(C/H)^2}+1\right)\ll \frac{N^{1/4}H}{K^{11/4}}+\frac{N^{3/4}}{K^{13/4}H}.$$
As $K\leq N^{1/3}$, the second term is smaller than the contribution of $S_1$ from the case $q_2=q_3$. Thus, the contribution of the first term in $S'$ can be bounded by 
\begin{align}
    S'_{q_1^*=0}\ll HK^2\cdot \frac{N^2}{C^3}\cdot \left(\frac{C}{HK^3}\cdot \frac{N^{1/4}H}{K^{11/4}}\right)^{1/2}\ll HN^{7/8}K^{3/8}. \label{S_sketchq_1=0}
\end{align}
For the nonzero frequencies $q_1^*\neq 0$, we have $ S_q\ll \sqrt{mh_1}$ and $S_1\ll \frac{C^3}{K^{7/2}H^{3/2}}$. Consequently we have $S_{q_1^*\neq 0}\ll \frac{N^{3/2}}{H^{1/4}K^{3/4}}$.
\begin{remark}
    At this point, with the choice of $K=\frac{N^{5/9}}{H^{10/9}}$, we have 
    $$S'\ll N^{\varepsilon}(N^{13/12}H^{7/12}+N^{10/9}H^{5/18}).$$
    This bound is already non-trivial in the range $N^{1/5+\varepsilon}\leq H\leq N^{1/2-\varepsilon}$, which would have been an improvement over the existing best known range of  $N^{1/4+\varepsilon}\leq H\leq N^{1/2-\varepsilon}$ as in \cite{dasgupta2024secondmomentgl3standard}. 
\end{remark}

To get a finer estimate for the nonzero frequencies, we apply infinite Cauchy's inequality as in \cite{ALM}. Here, we apply Cauchy's inequality for $j$-many times on $S_1$ and get 
$$S_1\ll \prod_{i=1}^{j-1}S_{i,1}^{1/2^i}\cdot S_{j}^{1/2^{j}},$$
where in every step $S_{j-1}\mapsto S_{j}$, for $j\geq 2$, we first apply Cauchy's inequality on $S_{j-1}$ keeping only the $q_{j+1}$-sum inside. Very crudely, after the application of the Poisson summation formula on $q_{j-1}$, $S_{j-1}$ is of the form 
$$S_{j-1}\rightsquigarrow\cdots \sum_{q_{j-1}^*}\cdots \sum_{q_j}\alpha_j\sum_{q_{j+1}}\alpha_{j+1}\cdots .$$
Once we apply the Cauchy's inequality on $S_{j-1}$ keeping only the $q_{j+1}$-sum inside, we get $S_{j-1}\leq S_{1,j-1}^{1/2}S_j^{1/2}$ where 
$$S_{1,j-1}\rightsquigarrow \cdots \sum_{q_{j-1}^*}\cdots \sum_{q_j}|\alpha_j|^2~~~\text{ and }~~~S_{j}\rightsquigarrow\cdots \sum_{q_{j-1}^*}\cdots \sum_{q_j}\Big|\sum_{q_{j+1}}\alpha_{j+1}\cdots\Big|^2 .$$ We bound $S_{1,j-1}$ trivially and in $S_j$ we open the absolute square and apply Poisson summation formula on $q_j$ and repeat the process. At every step, the variable $q_j$ is of the size $C/h_2$ and the arithmetic conductor is of the size $mh_1$. Then in every step, due to the Poisson summation formula, we save $\frac{C}{\sqrt{mh_1}h_2}\sim  \frac{H^{3/2}\sqrt{K}}{C}$. Thus, after $j$-many applications of the Cauchy's inequality, we can bound $S_1$ by $$S_1\ll \frac{C^3}{K^{7/2}H^{3/2}} \cdot \Big(\frac{C}{H^{3/2}\sqrt{K}}\Big)^{\sum_{i=1}^j2^{-j}}.$$
Thus, by applying Cauchy's inequality ad infinitum, we get $S_1 \ll \frac{C^4}{K^4H^3}.$
Hence, we derive 
\begin{align}
    S'_{q_1^*\neq 0}\ll HK^2\cdot \frac{N^2}{C^3}\cdot \left(\frac{C}{HK^3}\cdot \frac{C^4}{K^4H^3}\right)^{1/2} \ll \frac{N^{7/4}}{K^{5/4}H}.\label{S_sketch_q1nonzero}
\end{align}
Finally, from (\ref{S_sketch_m0}), (\ref{S_sketch_q2=q3}), (\ref{S_sketchq_1=0}) and (\ref{S_sketch_q1nonzero}) and choosing $K=\frac{\sqrt{N}}{H}$, we derive Theorem \ref{maintheorem}. 

\section{Preliminaries}
\subsection{$\GL(3)$ Maa\ss~ forms}
Let $F$ be a normalized Hecke-Maa\ss~ cusp form for $\SL(3,\BZ)$ with Fourier coefficient $A(n_1,n_2)$. Then the $L$-function associated to $F$ can be defined by the Dirichlet series 
    \begin{align}
        L(F,s):=\sum_{n_2=1}^{\infty}A(1,n_2)n_2^{-s}, ~~~\text{for}~~\Re(s)>1.\label{Lfunctiondefn}
    \end{align}

\begin{lemma}[Ramanujan bound on average, \cite{molteni_upper_2002}]\label{rambound} 
Let the notations be as in above, the Fourier coefficients $A(n_1, n_2)$ satisfy
    \begin{equation}
        \sum_{n_1^2n_2\ll N}|A(n_1,n_2)|^2\ll_{F, \varepsilon} N^{1+\varepsilon}.
    \end{equation}
\end{lemma}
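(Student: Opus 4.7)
The plan is to deduce this bound from Rankin--Selberg theory on $\GL(3)\times \GL(3)$. For $F$ a Hecke--Maa\ss~cusp form on $\SL(3,\BZ)$, standard unfolding arguments give the Dirichlet series identity
\begin{equation*}
    D(s):=\sum_{n_1,n_2\geq 1}\frac{|A(n_1,n_2)|^2}{(n_1^2 n_2)^{s}}\ =\ \frac{L(s,F\times \widetilde{F})}{\zeta(3s)},
\end{equation*}
valid for $\Re(s)>1$. Since $F$ is cuspidal on $\GL(3)$, the Rankin--Selberg $L$-function $L(s,F\times \widetilde{F})$ is meromorphic on $\BC$ with a unique pole, simple at $s=1$, of residue a positive multiple of $\|F\|^{2}$. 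As $\zeta(3s)$ is nonzero at $s=1$, the quotient $D(s)$ inherits a simple pole at $s=1$ and is holomorphic in a punctured neighborhood of it.

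With this meromorphic input the bound follows from Rankin's trick, bypassing Perron and contour shifts entirely. For any real $\sigma>1$, using $(N/(n_1^2 n_2))^{\sigma}\geq 1$ whenever $n_1^2 n_2\leq N$, one obtains
\begin{equation*}
    \sum_{n_1^2 n_2\leq N}|A(n_1,n_2)|^2\ \leq\ N^{\sigma}\sum_{n_1,n_2\geq 1}\frac{|A(n_1,n_2)|^2}{(n_1^2 n_2)^{\sigma}}\ =\ N^{\sigma}\,D(\sigma).
\end{equation*}
Choose $\sigma=1+1/\log N$. Then $N^{\sigma}=eN$, while the simple pole at $s=1$ yields $D(\sigma)=c_F\log N+O_F(1)\ll_F \log N$. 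Combining, the left-hand side is $\ll_F N\log N\ll_{F,\varepsilon} N^{1+\varepsilon}$, as asserted.

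The only genuine analytic input is the existence and precise location of the pole of $L(s,F\times \widetilde{F})$; this is classical for cuspidal automorphic representations of $\GL(3)$ via Jacquet--Piatetski-Shapiro--Shalika theory, and so constitutes the main (and really only) obstacle. Note that no functional equation, convexity bound, nor analytic continuation beyond a neighborhood of $s=1$ is needed for this qualitative upper bound; those deeper inputs would be required only if one wanted to upgrade the conclusion to a sharp asymptotic of the form $\sum_{n_1^2 n_2\leq N}|A(n_1,n_2)|^2\sim c_F N$ with a power-saving error term, which is the quantitative statement actually proved in the cited work of Molteni.
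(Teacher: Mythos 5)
Your argument is correct, and it is worth noting that the paper itself gives no proof of this lemma: it is quoted as a black box from Molteni's work. Your route — the Rankin--Selberg identity $\sum_{n_1,n_2}|A(n_1,n_2)|^2(n_1^2n_2)^{-s}=L(s,F\times\widetilde F)/\zeta(3s)$ for $\Re(s)>1$, the simple pole of $L(s,F\times\widetilde F)$ at $s=1$ from Jacquet--Piatetski-Shapiro--Shalika, and Rankin's trick with $\sigma=1+1/\log N$ — is the classical proof of Ramanujan on average and even yields the slightly stronger bound $\ll_F N\log N$, which certainly implies the stated $\ll_{F,\varepsilon}N^{1+\varepsilon}$. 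The only point you gloss over is why the Dirichlet series itself converges at real $\sigma>1$ (so that $D(\sigma)$ is the value of the continued function there); this is standard, either as part of the JPSS theory or by Landau's lemma applied to the nonnegative-coefficient series $D(s)$, whose only real singularity to the right of the zeros of $\zeta(3s)$ is the pole at $s=1$. The difference from the cited source is one of scope rather than substance: Molteni's theorem is engineered to give such bounds with polynomial (indeed explicit) dependence on the analytic conductor, uniformly over families, by an inductive method that avoids relying on residue computations; for the present lemma the implied constant is allowed to depend on $F$, so your fixed-form Rankin--Selberg argument fully suffices and is arguably the more transparent justification.
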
 
\begin{lemma}\label{anotherramanujanbound}
The Fourier coefficients $A(n_1,n_2)$ satisfies the following: 
    $$\sum_{n_2\sim \frac{N}{n_1^2}}|A(n_1,n_2)|^2\leq \frac{N^{1+\varepsilon}}{n_1^2}\sum_{d|n_1}\frac{|A(n_1/d,1)|^2}{d}.$$
\end{lemma}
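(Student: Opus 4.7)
The plan is to reduce the statement to Lemma \ref{rambound} via the Hecke multiplicativity relation for $\GL(3)$ Fourier coefficients. The relevant identity, obtained by M\"obius inversion of the standard Hecke relation $A(m,1)A(1,n) = \sum_{d \mid \gcd(m,n)} A(m/d, n/d)$, reads
\begin{equation*}
A(n_1, n_2) \;=\; \sum_{d \mid \gcd(n_1, n_2)} \mu(d)\, A(n_1/d,\, 1)\, A(1,\, n_2/d).
\end{equation*}

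Applying Cauchy--Schwarz to this divisor sum and using $\tau(n_1) \ll n_1^{\varepsilon} \ll N^{\varepsilon}$ (which is valid since the outer sum is nonempty only when $n_1 \leq \sqrt{N}$), I would bound
\begin{equation*}
|A(n_1, n_2)|^2 \;\ll\; N^{\varepsilon} \sum_{d \mid \gcd(n_1, n_2)} |A(n_1/d, 1)|^2\, |A(1, n_2/d)|^2.
\end{equation*}
Summing over $n_2 \sim N/n_1^2$ and swapping the order of summation, the constraint $d \mid n_2$ is handled by writing $n_2 = dm$, which yields
\begin{equation*}
\sum_{n_2 \sim N/n_1^2} |A(n_1, n_2)|^2 \;\ll\; N^{\varepsilon} \sum_{d \mid n_1} |A(n_1/d, 1)|^2 \sum_{m \sim N/(n_1^2 d)} |A(1, m)|^2.
\end{equation*}

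The inner sum over $m$ is a sum of squares of degree-one Fourier coefficients in a dyadic range, so Lemma \ref{rambound} (specialized to $n_1 = 1$) gives $\sum_{m \sim N/(n_1^2 d)} |A(1, m)|^2 \ll N^{1+\varepsilon}/(n_1^2 d)$. Substituting this bound and absorbing the $n_1^{\varepsilon}$ factor into $N^{\varepsilon}$ yields the claimed estimate. No step presents a genuine obstacle; the only point requiring care is justifying the dyadic version of Lemma \ref{rambound} for $A(1,m)$, which follows immediately since the lemma is stated for $n_1^2 n_2 \ll N$ and thus for the $n_1 = 1$ specialization in any dyadic window.
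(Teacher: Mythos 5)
Your proposal is correct and follows essentially the same route as the paper's own proof: M\"obius-inverted Hecke relation, Cauchy--Schwarz across the $O(N^{\varepsilon})$-term divisor sum, swapping the $d$- and $n_2$-sums with the substitution $n_2 = dm$, and then applying Lemma~\ref{rambound} (in its $n_1=1$ specialization) to the inner sum. The only cosmetic difference is that you explicitly flag the dyadic-window specialization of Lemma~\ref{rambound}, a point the paper leaves implicit.
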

\begin{proof}
Using the standard Hecke-relation, we derive  
    \begin{align*}
        \sum_{n_2\sim \frac{N}{n_1^2}}|A(n_1,n_2)|^2=&  \sum_{n_2\sim \frac{N}{n_1^2}}\left|\sum_{d|(n_1,n_2)}\mu(d)A(n_1/d,1)A(1,n_2/d)\right|^2\\
        \leq &N^{\varepsilon} \sum_{n_2\sim \frac{N}{n_1^2}}\sum_{d|(n_1,n_2)}|A(n_1/d,1)|^2|A(1,n_2/d)|^2\\
        \leq & N^{\varepsilon}\sum_{d|n_1}|A(n_1/d,1)|^2\sum_{n_2\sim \frac{N}{n_1^2d}}|A(1,n_2)|^2
        \leq \frac{N^{1+\varepsilon}}{n_1^2}\sum_{d|n_1}\frac{|A(n_1/d,1)|^2}{d}. \qedhere
    \end{align*}
\end{proof}

\subsection{Voronoi summation formula for $\SL(3,\mathbb{Z})$:}
	 Let $F$ be an $\SL(3,\mathbb{Z})$ Maass form with $(m,n)$th Fourier coefficient $A(m,n)$ and let $\tilde{F}$ be its dual form with Fourier coefficients $A(n,m)$. Then we have a summation formula for $A(1,m)$ twisted by additive characters. We precisely follow the expression of Corollary 3.7 of  \cite{Goldfeld2015-qu}, which we summarize in the lemma below.
	\begin{lemma}[Voronoi type summation formula]\label{voronoi}Let $\psi(x)\in \mathbb{C}_c^{\infty}(0,\infty)$ and let $a,\bar{a},q\in \mathbb{Z}$ with $(a,q)=1$. Then
	 	\begin{equation*}\label{key}
			\begin{split}
				\sum_{m=1}^{\infty}A(1,m)e\bigg(\frac{m\bar{a}}{q}\bigg)\psi(m)
				=&\frac{q\pi^{-5/2}}{4i}\sum_{\pm}\sum_{m_1|q}\sum_{m_2>0}\frac{A(m_2,m_1)}{m_1m_2} S(a,\pm m_2;qm_1^{-1})\Psi_{0,1}^{\pm}\bigg(\frac{m_2m_1^2}{q^3}\bigg).
			\end{split}
		\end{equation*}	
	\end{lemma}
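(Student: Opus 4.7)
The plan is to derive the identity by Mellin inversion combined with the functional equation of the additively twisted $L$-function attached to $F$, following the Goldfeld--Li approach.

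First, I would represent $\psi$ via its Mellin transform $\tilde\psi(s):=\int_0^\infty \psi(x)x^{s-1}\,dx$, so that $\psi(m)=\frac{1}{2\pi i}\int_{(\sigma)}\tilde\psi(s)\,m^{-s}\,ds$ for $\sigma$ sufficiently large. Interchanging summation and integration (justified by absolute convergence of the Dirichlet series \eqref{Lfunctiondefn} for $\Re(s)>1$), the left-hand side of the claimed identity becomes
\[
\frac{1}{2\pi i}\int_{(\sigma)}\tilde\psi(s)\,D(s)\,ds,\qquad D(s):=\sum_{m\geq 1}A(1,m)\,e\!\left(\frac{m\bar a}{q}\right)m^{-s}.
\]
The task now reduces to producing a functional equation $s\leftrightarrow 1-s$ for $D(s)$ that exposes the dual sum over $(m_1,m_2)$.

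Second, I would decompose the additive character into Dirichlet characters. Splitting $m=m_1 m'$ with $m_1$ recording the common factor of $m$ with $q$, and applying Gauss sum orthogonality on the coprime piece,
\[
e\!\left(\frac{m'\bar a}{q/m_1}\right)=\frac{1}{\varphi(q/m_1)}\sum_{\chi\bmod q/m_1}\tau(\bar\chi)\,\chi(m')\,\bar\chi(a), \qquad (m',q/m_1)=1,
\]
and then separating by the conductor of $\chi$, one rewrites $D(s)$ as a finite linear combination, indexed by divisors $m_1\mid q$ and primitive Dirichlet characters $\chi$ modulo divisors of $q/m_1$, of the twisted $\GL(3)$ $L$-functions $L(s,F\otimes\chi)$.

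Third, I would invoke the $\GL(3)$ functional equation
\[
L(s,F\otimes\chi)=c_\chi(s)\,L(1-s,\widetilde F\otimes\bar\chi)
\]
for each such twist, where $c_\chi(s)$ is a product of the archimedean factors $G_\pm(s)$ (the even and odd parts of the gamma factor) with explicit $\chi$-dependent local constants. Substituting and regrouping, the sum over $\chi$ on the dual side collapses: the Gauss sums $\tau(\bar\chi)$ combine with the character values $\bar\chi(m_2)$ appearing from the dual Dirichlet expansion into the Kloosterman sum $S(a,\pm m_2;q/m_1)$, via the identity $\sum_{\chi\bmod r}\tau(\chi)\bar\chi(m)\chi(a)=\varphi(r)\,e(ma/r)$ applied at each conductor, while the dual Fourier coefficients line up as $A(m_2,m_1)/(m_2 m_1^2)^{1-s}$. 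The sign $\pm$ records the parity piece $G_\pm(s)$ from which the given term originates. Finally, substituting the resulting identity for $D(s)$ back into the Mellin--Barnes integral and shifting the contour to $\Re(s)=1-\sigma$ allows one to identify the remaining $s$-integral as the transform $\Psi^{\pm}_{0,1}$ of Goldfeld evaluated at $y=m_2 m_1^2/q^3$, with the accumulated constants collapsing to the prefactor $q\pi^{-5/2}/(4i)$. The main obstacle in this plan is the combinatorial step: one must track contributions with $\gcd(m,q)>1$, show that only primitive characters survive at each conductor, and verify that the Gauss sums reassemble precisely as $S(a,\pm m_2;q/m_1)$ rather than some twisted variant. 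Justifying the contour shift is routine, requiring convexity bounds for $L(s,F\otimes\chi)$ in vertical strips together with the rapid decay of $\tilde\psi$ arising from repeated integration by parts using the compact support of $\psi$.
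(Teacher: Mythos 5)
You should first be aware that the paper does not prove Lemma \ref{voronoi} at all: it is quoted directly from Corollary 3.7 of \cite{Goldfeld2015-qu}, so there is no internal argument to compare against, and your sketch has to be judged on its own terms.

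On its own terms, the analytic skeleton (Mellin inversion, contour shift, identification of the shifted $s$-integral with $\Psi^{\pm}_{0,1}$ at $y=m_2m_1^2/q^3$) is fine, but the step you yourself flag as ``the main obstacle'' is a genuine gap, not a verification left to the reader, and as described it would fail for composite $q$. Two concrete points. First, the functional equation of $L(s,F\otimes\chi)$ for a primitive character $\chi$ only ever produces the coefficients $A(1,m_2)$ (equivalently $A(m_2,1)$ of $\widetilde F$) on the dual side; the coefficients $A(m_2,m_1)$ with $m_1\mid q$ appearing in the lemma cannot be generated by Gauss-sum manipulations and contour shifts alone. They arise from recombining the terms with $(m,q)>1$ and the contributions of imprimitive characters by means of the $\GL(3)$ Hecke relations, and this recombination is the real content of the formula for general modulus: the character-decomposition proof was originally carried out only under restrictions on the modulus (Goldfeld--Li), the general case was first proved by Miller--Schmid by an entirely different (automorphic distribution) method, and the later character-based proofs for arbitrary $q$ (K{\i}ral--Zhou, Zhou) require the Hecke relations as an essential extra input, which your plan never invokes. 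In particular, the assertion that ``only primitive characters survive at each conductor'' is not correct: imprimitive characters do contribute and must be folded in through conductor-lowering identities. Second, the mechanism you cite for producing the Kloosterman sum is not the right one: the displayed identity $\sum_{\chi\bmod r}\tau(\chi)\bar\chi(m)\chi(a)=\varphi(r)\,e(ma/r)$ only reassembles additive characters (which is exactly why the $\GL(2)$ Voronoi formula has $e(\mp \bar a n/q)$ and no Kloosterman sum). For $\GL(3)$ the sum $S(a,\pm m_2;qm_1^{-1})$ comes from the factor $\tau(\chi)^3$, essentially a cube of Gauss sums, sitting in the root number of $F\otimes\chi$, combined with orthogonality; since your $c_\chi(s)$ is left unspecified, the claimed reassembly cannot even be checked. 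So the proposal is a reasonable road map, but the composite-modulus combinatorics together with the Hecke relations --- precisely what it omits --- are where the actual work lies, which is presumably why the paper simply cites the result.
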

	In the above lemma,   $\Psi_{0,1}^{\pm}\big(\frac{m_2m_1^2}{q^3}\big)=\Psi_0\big(\frac{m_2m_1^2}{q^3}\big)\pm\frac{\pi^{-3}q^3}{m_1^2m_2i}\Psi_1\big(\frac{m_2m_1^2}{q^3}\big)$ consists of four terms. But we will only estimate $\Psi_0(x)$ with the help of the following lemma by \cite{Li} (Lemma 6.1) and consider only the term $\Psi_0(x)$. The estimate of $\Psi_1(x)$ is quite similar, so are of the remaining three terms.
	\begin{lemma}\label{Voronoipsi}
		Suppose $\psi(x)$ is a smooth function compactly supported on $[X,2X]$ and  $\Psi_0(x)$ is defined as above, then for any fixed integer $K\geq1$ and $xX\gg1$, we have
		\begin{equation*}\label{key}
			\begin{split}
\Psi_0(x)=&2\pi^4xi\int\limits_{0}^{\infty}\psi(y)\sum_{j=1}^{K}\frac{c_j\cos(6\pi x^{1/3}y^{1/3})+d_j\sin(6\pi x^{1/3}y^{1/3})}{(\pi^3xy)^{j/3}}dy+O\left((xX)^{\frac{-K+2}{3}}\right),
			\end{split}
		\end{equation*}		
		where $c_j$ and $d_j$ are constants depending on $\alpha_i$, in particular, $c_1=0,\ d_1 = \frac{-2}{\sqrt{3\pi}}$.
	\end{lemma}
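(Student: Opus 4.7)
The plan is to start from the Mellin--Barnes representation of the Voronoi kernel. From the definition arising in the proof of the $\SL(3,\BZ)$ Voronoi summation formula (as in Goldfeld--Li), the transform $\Psi_0$ admits the shape
$$\Psi_0(x)=\int_0^\infty \psi(y)\,K_0(xy)\,dy,$$
where the kernel $K_0(u)$ has an inverse Mellin representation of the form
$$K_0(u)=\frac{1}{2\pi i}\int_{(\sigma)}(\pi^3 u)^{-s}\prod_{i=1}^3\frac{\Gamma\!\left(\frac{s+\alpha_i}{2}\right)}{\Gamma\!\left(\frac{1-s-\alpha_i}{2}\right)}\,ds,$$
with $\alpha_1,\alpha_2,\alpha_3$ the archimedean Langlands parameters of $F$. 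Thus it suffices to produce an asymptotic expansion for $K_0(u)$ as $u\to\infty$, and to insert it into the $y$-integral; under the hypothesis $xX\gg 1$, the argument $u=xy$ is uniformly large on the support $[X,2X]$.

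Next, I would apply Stirling's asymptotic expansion
$$\log\Gamma(z)=\left(z-\tfrac12\right)\log z - z + \tfrac12\log(2\pi)+\sum_{k=1}^{M}\frac{B_{2k}}{2k(2k-1)\,z^{2k-1}}+O(|z|^{-2M-1})$$
to each of the six gamma factors (after using the Legendre duplication formula, if convenient, to rewrite each ratio). Collecting the leading behaviour, the integrand takes the form $e^{\Phi(s)}A(s)$, where the phase $\Phi(s)$ grows like $3s\log s - s\log(\pi^3 u)$ (up to an explicit linear term), and $A(s)$ is a smooth moderate-growth amplitude admitting its own formal expansion in inverse powers of $s$ drawn from the Stirling tails. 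A routine analysis locates a complex-conjugate pair of saddles $s_\pm$ with $|s_\pm|\asymp u^{1/3}$; deforming the contour to the steepest descent paths through $s_\pm$ and adding the two conjugate contributions produces, to leading order,
$$u^{1/3}\bigl(\alpha\cos(6\pi u^{1/3})+\beta\sin(6\pi u^{1/3})\bigr),$$
with the specific values $c_1=0$ and $d_1=-2/\sqrt{3\pi}$ falling out once the Stirling leading constants are tracked.

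I would then extend the saddle-point expansion to $K-1$ further orders by combining the higher Stirling terms with the Taylor expansion of $\Phi$ at each saddle and grouping by powers of $u^{-1/3}$. This produces the series indexed by $j=1,\dots,K$ in the lemma, with coefficients $c_j,d_j$ depending polynomially on the $\alpha_i$ and on Bernoulli numbers. The prefactor $2\pi^4 x i$ is obtained by collecting constants from the Mellin measure, the leading Stirling term, and the rescaling $u\mapsto xy$. For the error, I would replace the Stirling expansion of each $\Gamma$ by its $O(|s|^{-2M-1})$ remainder, and use the exponential decay of the integrand off of the steepest-descent path. The main saddle contribution of the remainder is then of size $u^{1/3}(u^{1/3})^{-K-1}=u^{(-K+2)/3}$, and integration against $\psi$ (supported on $[X,2X]$) yields the stated error $O((xX)^{(-K+2)/3})$.

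The hard part is the third step, namely carrying the saddle-point expansion uniformly up to order $K$ with an error of the claimed shape and, in particular, verifying the explicit values $c_1=0$ and $d_1=-2/\sqrt{3\pi}$. This requires a careful combinatorial bookkeeping of the Stirling coefficients across three gamma ratios and their interaction at the saddle; the remaining pieces (contour deformation, amplitude estimates on the descent contour, and the trivial $y$-integration at the end) are standard and pose no difficulty.
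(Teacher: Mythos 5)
First, a point of reference: the paper does not prove this lemma at all --- it is quoted verbatim as Lemma 6.1 of \cite{Li}, so the ``paper's proof'' is a citation, and the route you sketch (Mellin--Barnes representation of the kernel, Stirling's expansion of the gamma ratios, stationary phase/saddle point at height $\asymp u^{1/3}$) is indeed the standard mechanism behind that result. So your overall strategy is the right one.

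As a proof, however, the proposal has genuine gaps. (i) Your starting representation is not the actual definition of $\Psi_0$: writing $\Psi_0(x)=\int_0^\infty\psi(y)K_0(xy)\,dy$ with kernel $\frac{1}{2\pi i}\int_{(\sigma)}(\pi^3u)^{-s}\prod_i\Gamma\bigl(\tfrac{s+\alpha_i}{2}\bigr)/\Gamma\bigl(\tfrac{1-s-\alpha_i}{2}\bigr)\,ds$ gets the shifts and the Mellin-convolution normalization wrong, and those shifts are exactly what fix the powers of $u$ and the constants. This already shows in your leading-order claim: a saddle contribution of size $u^{1/3}$ is not consistent with the lemma, whose $j=1$ term makes $\Psi_0(x)\asymp x\,(\pi^3xX)^{-1/3}X\asymp(xX)^{2/3}$, i.e.\ an effective kernel of size $u^{2/3}$ in the correct (Mellin-convolution) normalization; with your stated kernel the saddle actually yields yet another power. (ii) The substance of the statement --- the uniform expansion to order $K$ with error $O\bigl((xX)^{(-K+2)/3}\bigr)$ and the explicit values $c_1=0$, $d_1=-2/\sqrt{3\pi}$ --- is precisely what you defer as ``careful combinatorial bookkeeping''; nothing in the proposal pins these down, and the prefactor $2\pi^4xi$ is likewise asserted rather than derived. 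So the write-up is a correct plan of attack but does not yet establish the lemma; to make it a proof you would need to start from the exact definition of $\Psi_0$ in Goldfeld--Li (or simply cite \cite{Li}, Lemma 6.1, as the paper does) and carry the saddle-point expansion through with the constants and error term tracked.
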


We also note that the oscillatory part $e(3x^{1/3}y^{1/3})$ is independent of the sum over $j$ and the non-oscillatory terms $(\pi^3xy)^{-j/3}$ decrease with increasing $j$ provided $xy\gg1$. So $\sum_{j=1}^{K}(\pi^3xy)^{-j/3}$ is asymptotic to the term $(\pi^3xy)^{-1/3}$, i.e., $j=1$. Hence, we take $K$ sufficiently large so that the error term $O\big((xX)^{\frac{-K+2}{3}}\big)$ can be dropped from further consideration. For such $K$, we will only consider only the term $j=1$. 

 For $xy\ll 1$, as the term $e(3x^{1/3}y^{1/3})$ is non-oscillatory, it can be absorbed into the smooth function of $\psi(y)$. Thus, we will consider 
\begin{equation}\label{estimateofpsi}
	\Psi_0(x)\asymp \sum_{\pm} 2\pi^4c_{\pm}xi\int\limits_{0}^{\infty} \psi(y)\frac{ e(\pm 3x^{1/3}y^{1/3})}{(\pi^3xy)^{1/3}}\ \ dy +O(T^{-A})
	.\end{equation}
\subsection{Poisson summation formula}
Let $f(x)$ be a compactly supported smooth function and $C(n)$ be a periodic function modulo $q$. Then by the Poisson summation formula, we get 
\begin{align}
    \sum_{n\in \BZ}C(n)f(n)
    =& \frac{1}{q}\sum_{n\in \BZ}\sum_{b\bmod q}C(b)e_q(nb)\int_\bR f(y)e\left(-\frac{ny}{q}\right)dy.\label{Poissonsum}
\end{align}
In particular, if $C(n)=e_q(an)$, we have 
\begin{align}
    \sum_{n\in \BZ}e_q(an)f(n)
    =&\sum_{\substack{n\in \BZ\\n\equiv -a\bmod q}}\int_\bR f(y)e\left(-\frac{ny}{q}\right)dy.\label{Poissonadditive}
\end{align}
\subsection{Stationary Phase Analysis}
To treat the oscillatory integrals of one variable, we will invoke the following lemmas.
     When the phase function does not have a stationary point, we will use the following lemma from \cite{munshi2015Jams}. 
    \begin{lemma}\label{repeatedint}
     Let $g(x)$  be a compactly supported smooth function supported in $[a,b]$ satisfying $g^j(x)\ll_{a,b,j} 1$. Let $f(x)$ be a real valued smooth function satisfying $|f'(x)|\geq \Theta_f$ and $|f^{(j)}(x)|\ll \Theta_f$ for $j\geq 2$. Then for any $j\in \BN$, we have 
     \begin{align}
         \int_{a}^be(f(x))g(x)dx\ll_{a,bj,\varepsilon} \Theta_f^{-j+\varepsilon}. 
     \end{align}
    \end{lemma}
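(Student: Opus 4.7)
The approach is the classical non-stationary phase argument: repeated integration by parts using the differential operator that lowers the oscillation of $e(f(x))$. Since $g$ vanishes together with all its derivatives at $a$ and $b$ (being compactly supported in $[a,b]$), boundary terms at every stage will be zero, which keeps the argument clean and removes the usual headache of endpoint contributions. No use of Lemma~\ref{voronoi} or any other earlier input is needed; this is a purely analytic statement about oscillatory integrals.

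Concretely, I would introduce
\[
D\phi(x) \;:=\; -\frac{1}{2\pi i}\frac{d}{dx}\!\left(\frac{\phi(x)}{f'(x)}\right),
\]
so that the identity $e(f(x)) = \frac{1}{2\pi i f'(x)}\frac{d}{dx}e(f(x))$ combined with a single integration by parts yields $\int_a^b e(f)g\,dx = \int_a^b e(f)\,Dg\,dx$. Iterating $j$ times, and using the support condition on $g$ at each stage to discard boundary terms, one gets
\[
\int_a^b e(f(x))g(x)\,dx \;=\; \int_a^b e(f(x))\,D^j g(x)\,dx,
\]
so the problem reduces to the pointwise estimate $\sup_{x\in[a,b]}|D^j g(x)| \ll_{a,b,j} \Theta_f^{-j}$.

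To establish this I would prove by induction on $j$ that $D^j g$ is a finite $\BZ$-linear combination of monomials of the form
\[
\mathcal{T}(x) \;=\; \frac{g^{(\alpha)}(x)\,f^{(b_1)}(x)\cdots f^{(b_r)}(x)}{f'(x)^{m}},
\]
with all $b_i \ge 2$, all exponents non-negative, and subject to the invariant $m - r = j$. The induction step is a one-line computation: applying $D$, the contribution $\phi'/f'$ leaves $r$ unchanged and raises $m$ by $1$, while the contribution $-\phi f''/(f')^2$ raises $r$ by $1$ and $m$ by $2$; in either case $m-r$ goes up by exactly one. Given this invariant, the hypotheses $|f'(x)| \ge \Theta_f$, $|f^{(b)}(x)| \ll_b \Theta_f$ for $b \ge 2$, and $g^{(\alpha)}(x) \ll_{a,b,\alpha} 1$ yield
\[
|\mathcal{T}(x)| \;\ll\; \frac{\Theta_f^{\,r}}{\Theta_f^{\,m}} \;=\; \Theta_f^{\,r-m} \;=\; \Theta_f^{-j},
\]
with the implied constant depending only on $j, a, b$.

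Multiplying by the length of the interval gives $\bigl|\int_a^b e(f)g\,dx\bigr| \ll_{a,b,j} \Theta_f^{-j}$, which is stronger than the claimed $\Theta_f^{-j+\varepsilon}$: for $\Theta_f \ge 1$ the factor $\Theta_f^{\varepsilon}$ harmlessly absorbs the $j$-dependent constant, whereas if $\Theta_f \le 1$ the trivial bound $\int|g| \le b-a$ already beats $\Theta_f^{-j+\varepsilon}$ outright. The only genuine obstacle is the bookkeeping in the inductive expansion of $D^j g$ (verifying the monomial shape and the invariant $m-r = j$), but this is a purely combinatorial check and introduces no analytic difficulty; it is exactly this clean invariant that lets a single argument handle all $j\in\BN$ uniformly.
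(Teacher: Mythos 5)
Your argument is correct and is exactly the standard non-stationary phase proof (repeated integration by parts with the operator $D$, the invariant $m-r=j$, and the trivial treatment of $\Theta_f\le 1$); the paper itself gives no proof, quoting the lemma from the cited source, and that source's argument is the same one you reproduce. Your bound $\Theta_f^{-j}$ is in fact slightly stronger than the stated $\Theta_f^{-j+\varepsilon}$, and the only nitpick is that in the induction step the term $\phi'/f'$ also produces pieces with an extra $f''$ (from differentiating $(f')^{-m}$), but these too raise $m-r$ by exactly one, so the invariant and the conclusion stand.
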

    \begin{remark}
    Frequently in this paper, we will mention that an oscillatory integral is negligibly small by repeated integration by parts if the first derivative of its phase function is bigger than $(N^{\varepsilon})$ for any $\varepsilon>0$ throughout the support of the smooth function $u(x)$. That is actually a direct consequence of this lemma.   
\end{remark}
    When the phase function have a unique stationary point, we will use the following lemma from \cite{Blomer_Khan_Young_2013} by Blomer, Khan, and Young. So, we restate Proposition 8.2 of \cite{Blomer_Khan_Young_2013} below.  
	\begin{lemma}\label{StationaryPhase1}
		Let $0<\delta<1/10, \Theta_g,\Theta_f,\Omega_g,L,\Omega_f>0$ and let $Z:=\Omega_f+\Theta_f+\Theta_g+L+1$ and we also assume that 	
		\begin{equation}\label{8.7}
			\Theta_f\geq Z^{3\delta},\ \ L\geq \Omega_g\geq \frac{\Omega_fZ^{\delta/2}}{\Theta_f^{1/2}}
			.\end{equation}
		Let $g(x)$ be a compactly supported smooth function with support in a length $L$ and satisfying the derivative $g^{(j)}(x)\ll \Theta_g\Omega_g^{-j}$ and let $x_0$ be the unique point such that  $f'(x_0)=0$, where $f(x)$ is a smooth function satisfying 
		\begin{equation}\label{8.8}
			f''(x)\gg \Theta_f\Omega_f^{-2},\ \ f^{(j)}(x)\ll \Theta_f \Omega_f^{-j},\ \ \ \forall j\in \mathbb{N}
			.\end{equation} 
		Then the oscillatory integral $I=\int\limits_{-\infty}^{\infty}g(x)e(f(x))dx$ would have the asymptotic expression (for arbitrary $A>0$)
		\begin{equation}\label{8.9}
			I=\frac{e(f(x_0))}{\sqrt{f''(x_0)}}\sum_{n\leq 3\delta^{-1}A}p_n(x_0)+O_{A,\delta}(Z^{-A}),
		\end{equation}
		where \begin{equation}\label{8.10}
			\begin{split}p_n(x_0)&=\frac{e^{\pi i/4}}{n!}\bigg(\frac{i}{2f''(x_0)}\bigg)^n G^{(2n)}(x_0),\\ \text{ where } G(x)&=g(x)e(f(x)-f(x_0)-f''(x_0)(x-x_0)^2/2).\end{split}
		\end{equation}
		Each $p_n$ is a rational function in derivatives of $f$ satisfying 
		\begin{equation}\label{8.11}
			\frac{d^j}{dx_0^j}p_n(x_0)\ll \Theta_g(\Omega_g^{-j}+\Omega_f^{-j})((\Omega_g^2\Theta_f/\Omega_f^2)^{-n}+\Theta_f^{-n/3})
			.\end{equation}
	\end{lemma}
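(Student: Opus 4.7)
The plan is to localize around $x_0$ and then evaluate the resulting Gaussian-type integral by a Fourier-analytic (Parseval) computation, which is the standard route to such stationary phase asymptotic expansions.

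I would first introduce a smooth cutoff $w(x)$ that equals $1$ on $|x-x_0| \leq c\,\Omega_f Z^{\delta}/\Theta_f^{1/2}$ and is supported in $|x-x_0| \leq 2c\,\Omega_f Z^{\delta}/\Theta_f^{1/2}$, with $w^{(j)} \ll (\Theta_f^{1/2}/(\Omega_f Z^{\delta}))^j$. The hypothesis $\Omega_g \geq \Omega_f Z^{\delta/2}/\Theta_f^{1/2}$ guarantees that this cutoff scale sits comfortably inside the amplitude scale $\Omega_g$, while $\Theta_f \geq Z^{3\delta}$ provides room for integration by parts. Writing $I = I_{\mathrm{near}} + I_{\mathrm{far}}$ with $I_{\mathrm{near}} = \int g\,w\,e(f)\,dx$, on the support of $g(1-w)$ one has $|x-x_0| \gg \Omega_f Z^{\delta}/\Theta_f^{1/2}$, so the lower bound $f''(x) \gg \Theta_f/\Omega_f^2$ combined with Taylor's theorem yields $|f'(x)| \gg Z^{\delta}\,\Theta_f^{1/2}/\Omega_f$. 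Applying Lemma \ref{repeatedint}, each integration by parts gains a factor $\ll Z^{-\delta/2}$ by the stated hypotheses on $\Omega_g$ and $\Theta_f$, so $I_{\mathrm{far}} \ll Z^{-A}$ for any $A$.

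For $I_{\mathrm{near}}$, I would Taylor-expand the phase as $f(x) - f(x_0) = \tfrac{1}{2}f''(x_0)(x-x_0)^2 + R(x)$ and absorb the cubic-and-higher remainder into the amplitude, setting $G(x) := g(x)\,w(x)\,e(R(x))$, which agrees with \eqref{8.10} up to the harmless cutoff $w$. Shifting the variable by $x_0$ and applying the Fresnel Fourier transform $\widehat{e(\alpha x^2/2)}(u) = |\alpha|^{-1/2}e^{\pi i \mathrm{sgn}(\alpha)/4}\,e(-u^2/(2\alpha))$ together with Parseval gives
\[
I_{\mathrm{near}} = \frac{e(f(x_0))\,e^{\pi i/4}}{\sqrt{f''(x_0)}} \int_{\mathbb R} \widehat G(u)\, e\!\left(\frac{-u^2}{2f''(x_0)}\right) du.
\]
Expanding the remaining oscillation as a power series $e(-u^2/(2f''(x_0))) = \sum_n (n!)^{-1}(-i\pi u^2/f''(x_0))^n$, interchanging sum and integral, and using $\int u^{2n}\widehat G(u)\,du = (2\pi i)^{-2n} G^{(2n)}(0)$ (after shifting back to $x_0$), the $n$-th term becomes exactly $p_n(x_0)$ in \eqref{8.10}.

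The principal obstacle, and the technical heart of the proof, is controlling the truncation error to $O(Z^{-A})$ and deriving the derivative bound \eqref{8.11}. By Faà di Bruno applied to $e(R(x))$, the derivative $G^{(2n)}(x_0)$ is a polynomial in the derivatives of $R$ at $x_0$, each satisfying $|R^{(j)}(x_0)| \ll \Theta_f \Omega_f^{-j}$ for $j\geq 3$ (while $R$, $R'$, $R''$ vanish at $x_0$), combined with derivatives of $g$ at the $\Theta_g \Omega_g^{-j}$ scale. The two competing amplitude scales produce the factor $\Omega_g^{-j}+\Omega_f^{-j}$ in \eqref{8.11}, and grouping derivatives of $R$ in pairs at $x_0$ produces the saving $(\Omega_g^2\Theta_f/\Omega_f^2)^{-n}+\Theta_f^{-n/3}$, with the two summands reflecting whether cubic-type or quartic-and-higher terms of $R$ dominate the count. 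Under the hypotheses $\Theta_f \geq Z^{3\delta}$ and $\Omega_g \geq \Omega_f Z^{\delta/2}/\Theta_f^{1/2}$, both saving factors are $\ll Z^{-\delta n}$, so truncating the series at $n = \lceil 3A/\delta\rceil$ yields the stated error $O_{A,\delta}(Z^{-A})$. The combinatorial bookkeeping in the Faà di Bruno step is the hardest piece; once \eqref{8.11} is in hand, the truncation error estimate and the overall asymptotic \eqref{8.9} follow immediately.
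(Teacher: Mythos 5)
The paper does not prove this lemma at all: it is restated verbatim (up to normalization) from Proposition~8.2 of \cite{Blomer_Khan_Young_2013} and used as a black box, so there is no internal proof to compare against. Your proposal reconstructs the standard proof of the stationary-phase expansion, which is essentially the argument behind the cited result: localize at scale $\Omega_f Z^{\delta}/\Theta_f^{1/2}$ around $x_0$, dispose of the far range by repeated integration by parts (Lemma~\ref{repeatedint}) using $|f'(x)|\gg Z^{\delta}\Theta_f^{1/2}/\Omega_f$ there, and evaluate the near range by Parseval against the Fresnel transform of the quadratic phase, expanding the remaining factor $e\bigl(-u^2/(2f''(x_0))\bigr)$ so that the $n$-th term matches $p_n(x_0)$; the hypotheses \eqref{8.7} enter exactly where you use them, giving a fixed power of $Z$ saved per integration by parts and making each successive term of the expansion smaller by $Z^{-\delta}$, so truncation at $n\asymp 3A/\delta$ is admissible. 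One minor slip: the cutoff scale $\Omega_fZ^{\delta}/\Theta_f^{1/2}$ is not guaranteed to sit inside $\Omega_g$ (it may exceed $\Omega_g$ by up to $Z^{\delta/2}$); this is immaterial because $w\equiv 1$ near $x_0$, so $w$ does not affect $G^{(2n)}(x_0)$, and in the far range only the comparison of the derivatives of $gw$ with $|f'|$ matters.

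Two substantive caveats. First, the step you yourself flag as the technical heart is where all the work lies: one cannot simply interchange the infinite exponential series with the $u$-integral; one should apply Taylor's theorem with remainder and bound $\int |u|^{2M+2}|\widehat G(u)|\,du$ via high-order derivative bounds on $G$, which is the same Fa\`a di Bruno bookkeeping that yields \eqref{8.11}. Your outline asserts this rather than carries it out, so as written it is a correct plan rather than a complete proof. Second, carried out in the $e(x)=e^{2\pi i x}$ normalization of the restatement, your Parseval computation produces the $n$-th term with constant $\bigl(i/(4\pi f''(x_0))\bigr)^n$ rather than $\bigl(i/(2f''(x_0))\bigr)^n$; the latter form belongs to the $e^{ih}$ normalization of the original statement in \cite{Blomer_Khan_Young_2013}. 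This mismatch is a harmless artifact of the restatement (the paper only uses the size of the leading term), but a careful execution of your argument would surface it.
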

	\begin{remark}
		As observed in \cite{Blomer_Khan_Young_2013}, from (\ref{8.7}) and (\ref{8.11}), in the asymptotic expression (\ref{8.10}), every term is smaller than the preceding term. So it is enough to consider the leading term in the asymptotic provided we verify \ref{8.7}. 
	\end{remark}

\subsection{$\delta$-method of Duke-Friedlander-Iwaniec}
To separate the oscillations, we will employ the circle method, specifically the $\delta$-method of Duke, Friedlander and Iwaniec (chapter $20$, \cite{Iwaniec2022-qo}) with the conductor lowering mechanism of Munshi. Here the $\delta$-symbol represents the function
	\begin{equation}\label{deltasymboldefn}
		\delta(n):\mathbb{Z}\rightarrow\{0,1\},\ \  \text{such that }\ \ \delta(n)=\begin{cases}
			0 \text{ if } n\neq 0\\
			1\text{ if } n=0
		\end{cases}
		.\end{equation}
	\begin{lemma}\label{delta} \cite{munshi2022Gl3Gl2} 
    We have
		\begin{equation*}
			\delta(n)=\frac{1}{C}\sum_{1\leq q\leq C}\frac{1}{q}\ \sideset{}{^*}\sum_{a\bmod q}e\bigg(\frac{an}{q}\bigg)\int\limits_{-\infty}^{\infty}g(q,x)e\bigg(\frac{nx}{qQ}\bigg)dx,
		\end{equation*} 
		where the sum over $a$ is over the reduced residue class of $q$ (signified by $*$)  and for any $\alpha>1$, $g(q,x)$ satisfies
		\begin{align}
				&	g(q,x)=1+O\bigg(\frac{1}{qC}\bigg(\frac{q}{C}+|x|\bigg)^{\alpha}\bigg),\ \ \ g(q,x)\ll|x|^{-\alpha},\label{gqxshape}\\
                & x^j\frac{\partial ^j}{\partial x^j}g(q,x)\ll \log C\min\left\{\frac{C}{q},\frac{1}{|x|}\right\},\label{gqxderivative}\\
                & \int_{\bR} (|g(q,x)|+|g(q,x)|^2)dx\ll_{\varepsilon} C^{\varepsilon}.\label{gqxintegral}
			\end{align}
	\end{lemma}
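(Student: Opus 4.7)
The plan is to follow the Duke--Friedlander--Iwaniec construction of the $\delta$-symbol as presented in Chapter~20 of \cite{Iwaniec2022-qo}, incorporating Munshi's conductor-lowering modification. First I would fix an even smooth auxiliary function $w\in C_c^\infty(\mathbb R)$ supported on $[-2,-1]\cup[1,2]$, normalised so that the leading constant of $\sum_{q\ge 1}w(q/C)/q$ takes the desired value. Expanding the Ramanujan sum $c_q(n)=\sum_{a\bmod q}^{*}e(an/q)=\sum_{d\mid(q,n)}d\,\mu(q/d)$ and reordering the summations in the model expression
\[
\Sigma(n)\;=\;\sum_{q\ge 1}\frac{c_q(n)}{q}\Bigl(w(q/C)-w(n/qC)\Bigr)
\]
gives an explicit closed form via Möbius inversion: for $n=0$ the subtraction vanishes and the first summand recovers the normalising constant, while for $n\neq 0$ the two pieces cancel exactly. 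This is the classical DFI identity that detects $n=0$.

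Next, to convert the discrete subtraction into the integral form in the statement, I would apply Fourier inversion $w(u)=\int_{\mathbb R}\hat w(\xi)e(\xi u)\,d\xi$ to both occurrences of $w$. After rescaling the inner variable, the oscillation $e(nx/(qQ))$ emerges naturally, and the residual smooth factor defines $g(q,x)$ as a convolution-type expression in $w$ and $\hat w$. Munshi's conductor-lowering corresponds to introducing an extra free parameter into this rescaling so that the $q$-sum can be truncated at a length shorter than the Dirichlet threshold $\sqrt N$ at the cost of longer support in $x$; this is where the two parameters $C$ and $Q$ in the display separate.

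With the explicit formula for $g(q,x)$ in hand, its four properties follow by direct derivative analysis of the defining integral. Estimate \eqref{gqxshape} near $x=0$ is a Taylor expansion of the smooth integrand, giving $g(q,x)=1+O((q/C+|x|)^\alpha/(qC))$, while the tail bound $g(q,x)\ll|x|^{-\alpha}$ comes from $\alpha$-fold integration by parts against the oscillatory factor. Estimate \eqref{gqxderivative} is obtained by differentiating under the integral sign $j$ times: each $\partial_x$ either contributes a factor $\ll C/q$ from the oscillation or $\ll 1/|x|$ from extra integrations by parts, and multiplying by $x^j$ produces the claimed $\min(C/q,1/|x|)$ bound. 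Finally \eqref{gqxintegral} is an immediate consequence of the pointwise estimate $g(q,x)\ll\min(1,|x|^{-\alpha})$, which is $L^1$- and $L^2$-integrable on $\mathbb R$ uniformly in $q$.

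The main obstacle is justifying the cancellation in $\Sigma(n)$: the individual pieces $\sum_q c_q(n)w(q/C)/q$ and $\sum_q c_q(n)w(n/qC)/q$ are each only conditionally (or not) convergent, so the Möbius rearrangement that shows they differ by exactly $\delta(n)$ requires a careful interchange of summations that genuinely exploits the compact support of $w$. This is the heart of the DFI construction and the only non-routine step; once it is carried out and $g(q,x)$ is written down in closed form, all the stated bounds reduce to differentiation under the integral sign.
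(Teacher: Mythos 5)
The paper itself gives no proof of this lemma; it is quoted from \cite{munshi2022Gl3Gl2} and Chapter 20 of \cite{Iwaniec2022-qo}, so your proposal has to be measured against the standard DFI construction — and there it has a genuine flaw. The identity you place at the heart of the argument is wrong as stated: the model expression $\Sigma(n)=\sum_{q\ge1}\frac{c_q(n)}{q}\bigl(w(q/C)-w(n/(qC))\bigr)$ does not detect $n=0$, and its two pieces do not cancel exactly for $n\neq 0$. Take $n=1$ and $w$ supported away from the origin: then $w(1/(qC))=0$ for every $q\ge1$, so $\Sigma(1)=\sum_q \mu(q)w(q/C)/q$, which is small but not zero. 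The correct starting point is $\delta(n)=\sum_{d\mid n}\bigl(w(d)-w(|n|/d)\bigr)$ with $w(0)=0$ and $\sum_{d\ge1}w(d)=1$, where the exact cancellation for $n\neq0$ comes from the divisor involution $d\mapsto|n|/d$ (no M\"obius inversion is involved). Only after this does one expand the condition $d\mid n$ by additive characters and regroup the fractions into reduced form $a/q$; this forces, for each $q$, a complete sum over the multiples $d=qr$, namely $\Delta_q(n)=\sum_{r\ge1}\frac{1}{qr}\bigl(w(qr)-w(|n|/(qr))\bigr)$, and it is precisely this $r$-sum that your model expression drops — which is why your claimed ``exact cancellation via M\"obius inversion'' fails. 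The weight $g(q,x)$ is then defined, essentially, through the normalized Fourier transform of $\Delta_q$ regarded as a function of a real variable, and the properties \eqref{gqxshape}--\eqref{gqxintegral} are established by Poisson summation (Euler--Maclaurin) in $r$ for the specific choice of $w$ supported in $[C/2,C]$; they cannot be obtained by differentiating a ``convolution-type expression in $w$ and $\hat w$'' built from an identity that is not valid.

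Two smaller points. First, the simultaneous appearance of $C$ and $Q$ in the displayed formula is a typo carried in the statement (the rest of the paper uses $e\bigl(x(m-n-h)/(qC)\bigr)$ throughout), so they are the same parameter; your reading that Munshi's conductor lowering is ``where $C$ and $Q$ separate'' misinterprets this. In this paper the conductor lowering consists of choosing $C=\sqrt{N/K}<\sqrt{N}$ and exploiting the induced oscillation of the $x$-integral, not of inserting a second parameter inside the delta identity. Second, granting \eqref{gqxshape} your deduction of \eqref{gqxintegral} is fine, but the derivative bound \eqref{gqxderivative}, with its $\log C\min\{C/q,1/|x|\}$ shape, does not follow from generic differentiation under the integral sign; it requires the explicit formula for $g(q,x)$ coming from the construction above, as carried out in the cited sources.
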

    In particular, $g(q,x)$ is supported in $[-N^{\varepsilon},N^{\varepsilon}]$ with negligible error term. When $q\gg C^{1-\varepsilon}$ or  $|x|\gg C^{-\varepsilon}$, by (\ref{gqxderivative}), we have $$x^jg^j(q,x)\ll_{\varepsilon} C^{\varepsilon}.$$ This is the generic scenario. When $q\ll C^{1-\varepsilon}$ and $|x|\ll C^{-\varepsilon}$, by (\ref{gqxshape}), $g(q,x)$ can be taken to be $1$ with a negligible error term. This is the non-generic scenario. 
\subsection{Duality principle}
\begin{lemma}[Duality principle]\label{dualitylemma}
		Let $\phi: \mathbb{Z}^2\rightarrow \BC$. For any sequence of complex numbers $\{a_m\}_{m \in \mathbb{N}}$, we have
        \begin{align*}
			\sum_n\left|\sum_m a_m\phi(m,n)\right|^2\ll \left(\sum_m |a_m|^2\right)\sup_{\|b\|_2=1}\sum_m\left|\sum_n b(n)\phi(m,n)\right|^2,
		\end{align*}
		where the supremum is taken over all sequences of complex numbers $\{b(n)\}_{n \in \mathbb{N}}$ such that $$\|b\|_2=\sqrt{\sum_n|b(n)|^2}=1.$$
	\end{lemma}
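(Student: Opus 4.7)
The plan is to prove the duality principle by reformulating it as an operator-norm identity and then extracting the bound through a single application of the Cauchy--Schwarz inequality. Set $S_n := \sum_m a_m \phi(m,n)$, so the left-hand side becomes $\|S\|_2^2 = \sum_n |S_n|^2$; if $\|S\|_2 = 0$ the claim is trivial, so I assume $\|S\|_2 > 0$. To sidestep any convergence issue I first establish the inequality for finitely supported sequences $\{a_m\}$ and $\{b(n)\}$, where every swap of summation is vacuously legal, and then pass to the general case by approximation, using that both sides behave monotonically under truncation.

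The key step is to dualize the squared $\ell^2$-norm of $S$ against a well-chosen unit vector. Take
\[
b(n) := \overline{S_n}/\|S\|_2, \qquad \text{so that } \|b\|_2 = 1,
\]
and observe
\[
\sum_n |S_n|^2 \;=\; \|S\|_2 \sum_n b(n) S_n \;=\; \|S\|_2 \sum_m a_m \Big(\sum_n b(n)\phi(m,n)\Big),
\]
the last equality being just a rearrangement of a finite sum. Since the left-hand side is a nonnegative real, we may square both sides in absolute value and apply Cauchy--Schwarz to the outer $m$-sum:
\[
\Big|\sum_m a_m \sum_n b(n)\phi(m,n)\Big|^2 \;\leq\; \Big(\sum_m |a_m|^2\Big)\Big(\sum_m \Big|\sum_n b(n)\phi(m,n)\Big|^2\Big).
\]
Combining these, $\big(\sum_n |S_n|^2\big)^2 \leq \|S\|_2^2\,\big(\sum_m |a_m|^2\big)\big(\sum_m|\sum_n b(n)\phi(m,n)|^2\big)$. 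Dividing through by $\|S\|_2^2 = \sum_n |S_n|^2$ and then replacing the bracket involving $b$ by its supremum over all unit vectors yields exactly the statement of the lemma.

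The main obstacle is almost entirely cosmetic, namely handling convergence for general square-summable sequences; once the inequality is proved for finitely supported $\{a_m\}$, one extends it to the full case by noting that the left-hand side is the limit of its truncations and the right-hand side only grows under enlargement of the support. Conceptually, the whole argument is nothing more than the self-duality of $\ell^2$ combined with the identity $\|T\|_{\mathrm{op}} = \|T^\ast\|_{\mathrm{op}}$ for the Hilbert-space operator with kernel $\phi(m,n)$, and the Cauchy--Schwarz calculation above is precisely the standard proof of that identity specialized to this kernel.
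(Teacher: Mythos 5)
Your proof is correct: choosing the extremal unit vector $b(n)=\overline{S_n}/\|S\|_2$, rearranging the finite double sum, and applying Cauchy--Schwarz in $m$ is precisely the standard duality (i.e.\ $\|T\|=\|T^{*}\|$) argument, and your reduction to finitely supported sequences disposes of any convergence issues, which in the paper's application are vacuous anyway since all sums there are finite. The paper states this lemma without proof, as a standard principle, so there is nothing to compare against; your argument is the canonical one and fills that gap correctly.
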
    
\subsection{Character sum bound}
For the Kloosterman sum $S(a,b,c)$, we use the Weil's bound 
\begin{align}
    S(a,b,c)\ll & c^{1/2+\varepsilon}(a,b,c)^{1/2}\nonumber\\
    \ll& c^{1/2+\varepsilon}\sum_{d|c}d^{1/2}\cdot\delta(d|a)\cdot \delta(d|b),\label{weilsbound}
\end{align}
and for the Ramanujan sum $\fc_c(a)$, we use the following bound 
\begin{align}
    \fc_{c}(a)\ll (a,c)\ll \sum_{d|c}d\cdot \delta(d|a). \label{ramanujanbound}
\end{align}
\section{Delta method and the first Dualization}
\subsection{Separation of oscillations}
Let us recall the definition of $S'$ from \eqref{MainEquation}. We rewrite the expression as 
$$S'=\sum_{h}\sum_{n}A(1,n)\sum_{m}A(1,m)V\left(\frac{n}{N}\right)V\left(\frac{m}{N}\right)V\left(\frac{h}{H}\right)\delta(m-n-h),$$ 
where the delta symbol refers to the Kronecker-delta symbol. Then we appeal to Lemma \ref{delta} and apply the expansion formula for delta symbol. 
  We split the $q$-sum into a dyadic partition of length $Q$.  Finally, we get 
\begin{align}S'\ll_{\varepsilon} C^{\varepsilon} \sup_{Q\ll C}|S(Q)|,\label{S'defn}\end{align}
where 
\begin{align}
    S(Q)=&\int_{-\infty}^{\infty} \frac{1}{C}\sum_{q}\frac{1}{q}V\left(\frac{q}{Q}\right)g(q,x)\ \\
    &\times \sumstar_{a\bmod q} \sum_{h} e\left(\frac{-ah}{q}\right)e\left(\frac{-xh}{qC}\right)V\left(\frac{h}{H}\right)\label{hsum}\\
    &\times \sum_{n}A(1,n)e\left(\frac{-an}{q}\right)e\left(\frac{-xn}{qC}\right)V\left(\frac{n}{N}\right)\label{nsum}\\
     &\times \sum_{m}A(1,m)e\left(\frac{am}{q}\right)e\left(\frac{xm}{qC}\right)V\left(\frac{m}{N}\right)\label{msum}.
\end{align}
To keep it tidy, we will write $S(Q)$ as $S$ by dropping the dependence on $Q$ from writing.  
\subsection{Poisson summation formula ($h$-sum)}
\begin{lemma}
We have,
\begin{align}
    \sumstar_{a\bmod q} \sum_{h} e\left(\frac{-ah}{q}\right)e\left(\frac{-xh}{qC}\right)V\left(\frac{h}{H}\right)\asymp H\sum_{|h|\ll \frac{QN^{\varepsilon}}{H}}\sumstar_{\substack{a\bmod q\\ a\equiv h\bmod q}}W(\cdots)+O(N^{-2025}).\label{hsumafterpoisson}
\end{align}
\end{lemma}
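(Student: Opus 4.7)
The plan is a standard Poisson-summation argument applied to the inner $h$-sum, treating $V(h/H)\,e(-xh/(qC))$ as the smooth weight and $e(-ah/q)$ as the arithmetic weight. Using formula (\ref{Poissonadditive}) with $f(h) := V(h/H)\,e(-xh/(qC))$ and with $-a$ in place of $a$, I would write
\begin{equation*}
\sum_{h} e\!\left(\frac{-ah}{q}\right) e\!\left(\frac{-xh}{qC}\right) V\!\left(\frac{h}{H}\right)
= \sum_{\substack{h \in \mathbb{Z} \\ h \equiv a \bmod q}} \int_{\mathbb{R}} V(y/H)\, e\!\left(\frac{-xy}{qC} - \frac{hy}{q}\right) dy.
\end{equation*}
The substitution $y \mapsto Hy$ pulls out a factor $H$ and yields
\begin{equation*}
I_h(q,x) := \int_{\mathbb{R}} V(y)\, e\!\left( -\frac{H(h + x/C)\, y}{q} \right) dy,
\end{equation*}
which is essentially the Fourier transform of $V$ at frequency $\lambda := H(h + x/C)/q$.

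Since $V \in C^\infty_c([1,2])$ with $V^{(j)}(y) \ll_j 1$, repeated integration by parts (a direct application of Lemma \ref{repeatedint}) gives $I_h(q,x) \ll_A |\lambda|^{-A}$ for any $A > 0$, so $I_h(q,x)$ is negligible unless $|\lambda| \ll N^{\varepsilon}$. By (\ref{gqxshape})--(\ref{gqxderivative}) the function $g(q,x)$ is effectively supported in $|x| \ll N^{\varepsilon}$, and since $C \gg 1$ the contribution $|x/C|$ is harmless. Hence $I_h(q,x)$ is negligible unless $H|h|/q \ll N^{\varepsilon}$, i.e.\ unless $|h| \ll qN^{\varepsilon}/H \leq QN^{\varepsilon}/H$. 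Choosing $A$ sufficiently large controls the tail by $O(N^{-2025})$, as required.

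It remains to rearrange the double sum: swapping the order of summation between $a$ and $h$, one observes that for each $h$ in the admissible range, the congruence $h \equiv a \bmod q$ with $(a,q)=1$ singles out the unique reduced residue class $a \equiv h \bmod q$, available precisely when $(h,q)=1$. This converts the left-hand side into
\begin{equation*}
H \sum_{|h| \ll QN^{\varepsilon}/H} \sumstar_{\substack{a \bmod q \\ a \equiv h \bmod q}} I_h(q,x) + O(N^{-2025}),
\end{equation*}
which is exactly the asserted identity with $W(\cdots) := I_h(q,x)$. I do not anticipate any serious obstacle here; the only point worth a moment's care is the uniformity of the truncation range $|h| \ll QN^{\varepsilon}/H$ across the effective $x$-support of $g(q,x)$, which follows immediately from (\ref{gqxshape})--(\ref{gqxderivative}).
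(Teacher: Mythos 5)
Your argument is correct and follows essentially the same route as the paper: Poisson summation in the additive form of \eqref{Poissonadditive}, a change of variables $y\mapsto Hy$ to extract the factor $H$, repeated integration by parts via Lemma~\ref{repeatedint} to truncate the dual $h$-variable at $|h|\ll QN^{\varepsilon}/H$, and the observation that the congruence $a\equiv h\bmod q$ selects the unique reduced class. The only cosmetic difference is that the paper absorbs the flat phase $e(-xh/(qC))$ into the smooth weight \emph{before} applying Poisson (using $|x|\ll N^{\varepsilon}$ and $H\ll C$, so $xh/(qC)\ll N^{\varepsilon}$), whereas you carry it through Poisson and observe afterward that the $x/C$ shift in the dual frequency is $O(N^{\varepsilon}/C)$ and hence negligible for the truncation — these are equivalent.
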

\begin{proof}
We will apply the Poisson summation formula on the $h$-sum in (\ref{hsum}). As $x\ll C^{\varepsilon}$ and $\ h\ll H,$
if we choose $H\ll C$, the exponential term $e\left(\frac{-xh}{qC}\right)$ can be absorbed into the smooth function for all $Q$.  Now  by the Poisson summation formula (\ref{Poissonadditive}), we get 
\begin{align}
    S_{\CH}=\sum_{h}e\left(\frac{-ah}{q}\right)V\left(\frac{h}{H}\right)
    =& H\sum_{\substack{h\in \BZ\\h\equiv a \bmod q}}\int_{-\infty}^{\infty} e(-hHy/q)V\left(y\right)dy .
\end{align}
As $y^jV^j(y)\ll 1$, by Lemma \ref{repeatedint}, the integral is negligibly small unless $h\ll \frac{QN^{\varepsilon}}{H}.$
In that range, the $y$-integral can be absorbed into the smooth functions of $q$ and $h$.   
\end{proof}
\begin{remark}
    The sum over $h\ll \frac{QN^{\varepsilon}}{H}$ actually denotes a  smooth sum $W(h)$ supported in $[-\frac{QN^{\varepsilon}}{H}, \frac{QN^{\varepsilon}}{H}]$ which satisfies $y^jW^j(y)\ll_{j} 1$. However, for brevity we have dropped the weight function from writing.
\end{remark}
\begin{remark}
    Going further, we can entirely replace the $a$ variable with the $h$ variable as $a\equiv h\bmod q $. 
\end{remark}
\subsection{Voronoi summation formula ($m$ and $n$-sum)}
To dualize the $m$-sum
$$S_\CM:=\sum_{m}A(m)e\left(\frac{hm}{q}\right)\psi(m),\ \ \text{where } \psi(y)=e\left(\frac{xm}{qC}\right)V\left(\frac{m}{N}\right),$$
we employ the Voronoi-type summation formula (Lemma \ref{voronoi}). Thus, we immediately get 
\begin{equation}
    S_\CM=\frac{q\pi^{3/2}}{2}\sum_{\pm}\sum_{m_1|q}\sum_{m_2>0}\frac{A(m_2,m_1)}{m_1m_2}S(\overline h,\pm m_2; qm_1^{-1}) \Psi^{\pm}\left(\frac{m_2m_1^2}{q^3}\right).
\end{equation}
Now by using the expression (\ref{estimateofpsi}) and a change of variable in the $y$-integral ($y\mapsto yN$), we have 
\begin{align}\label{PsiAsympMsum}
     \Psi^{\pm}\left(\frac{m_2m_1^2}{q^3}\right)\asymp N^{2/3} \left(\frac{m_2m_1^2}{q^3}\right)^{2/3}\CI_{\CM}, 
\end{align}
where 
\begin{align}
   \CI_{\CM}:= \int_{0}^{\infty}e\left(\frac{xNy}{qC}\pm \frac{3(m_2m_1^2Ny)^{1/3}}{q}\right)V(y) dy .\label{integralmsum}
\end{align}
Thus, after applying \eqref{PsiAsympMsum} and \eqref{integralmsum} we have
\begin{align}
    S_{\CM}\asymp \frac{N^{2/3}}{q}\sum_{\pm}\sum_{m_1|q}\sum_{m_2>0}\frac{A(m_2,m_1)}{(m_1^2m_2)^{1/3}}\cdot m_1\cdot S(\overline h,\pm m_2; qm_1^{-1})\cdot \CI_\CM+O(N^{-2025}).\label{msumaftervoronoi}
\end{align}

Similarly we apply Voronoi summation formula on the $n$-sum and get 
\begin{align}
    S_{\CN}\asymp \frac{N^{2/3}}{q}\sum_{\pm}\sum_{n_1|q}\sum_{n_2>0}\frac{A(n_2,n_1)}{(n_1^2n_2)^{1/3}}\cdot n_1\cdot S(\overline h,\pm n_2; qn_1^{-1})\cdot \CI_\CN+O(N^{-2025}),\label{nsumaftervoronoi}
\end{align}
where 
\begin{align}
   \CI_{\CN}:= \int_{0}^{\infty}e\left(-\frac{xNz}{qC}\pm \frac{3(n_2n_1^2Nz)^{1/3}}{q}\right)V(z) dz .\label{integralnsum}
\end{align}
Here all the choices of $\pm$ are allowed. Finally putting \eqref{hsumafterpoisson}, \eqref{msumaftervoronoi} and \eqref{nsumaftervoronoi} together we have 
\begin{align}
    S\asymp  \frac{H}{CQ}\sum_{q}V\left(\frac{q}{Q}\right) \sum_{|h|\ll \frac{QN^{\varepsilon}}{H}}\cdot \int_{-\infty}^{\infty}g(q,x)S_\CM\cdot S_\CN.\label{Sbeforelinear}
\end{align}

\section{Treating the  integral transforms}
Now, we look at the triple integral in \eqref{Sbeforelinear}:
 \begin{align}
     \mathcal{I}(m,n,q):=&\int_{-\infty}^{\infty}g(q,x) \cdot \CI_\CM\cdot \CI_\CN\cdot  dx\nonumber\\
     =& \int_{x}g(q,x)\int_{y\sim 1}e\left(\frac{xNy}{qC}\pm \frac{3(m_2m_1^2Ny)^{1/3}}{q}\right)\int_{z\sim 1}e\left(-\frac{xNz}{qC}\pm \frac{3(n_2n_1^2Nz)^{1/3}}{q}\right)dzdydx.\label{Itriple integral}
 \end{align}
Let us recall that by (\ref{gqxshape}), $g(q,x)$ is essentially supported in $|x|\leq C^{\varepsilon}$.  We will consider the case for when $x$ is positive because the contribution from negative $x$ is exactly symmetrical to the positive $x$. Further, since the contribution of $S$ for the part of $|x|\leq N^{-2028}$ is at most $O(N^{-2025})$ and thus negligibly small, we will only consider the case for that $x>N^{-2028}$. We split the integral into dyadic intervals of length $X$ where $N^{-2028}<X<C^{\varepsilon}$. So we write 
\begin{align}
    \CI(m,n,q)=&2\sum_{\substack{N^{-2028}\leq X\leq N^{\varepsilon}\\\text{dyadic sum }}
    }\CI(m,n,q,X)+O(N^{-2025})\label{Imnq},\\ \text{ where }\CI(m,n,q,X):=&\int_{x\sim X}g(q,x) \cdot \CI_\CM\cdot \CI_\CN\cdot  dx.\label{I(m,n,q,X)defn}
\end{align}
 Let us note that in the $\CI_\CM$-integral or the $\CI_\CN$-integral, the oscillation related to the $x$-integral: $e(xN/qC)$ is of the size $$\frac{xNy}{qC}\sim \frac{XN}{QC}.$$ 
 Then according to the size of $\frac{xN}{qC}$, we will split our analysis into two cases. When $\frac{XN}{QC}\leq N^{\varepsilon}\iff X\leq \frac{N^{\varepsilon}QC}{N}$, $e(ay)$ can be considered as a flat function with no oscillation and we can treat $\CI_\CM$ and $\CI_\CN$-integrals by repeated integration by parts. But when $X\geq \frac{N^{\varepsilon}QC}{N}$, we will treat the above integral by stationary phase analysis. 
 \subsection{$X\leq \frac{QCN^{\varepsilon}}{N}$}
 When $X\leq \frac{qCN^{\varepsilon}}{N}$, we can absorb the term $e(\frac{xNY}{qC})$ into the smooth function $V(y)$ . By repeated integration by parts (Lemma \ref{repeatedint}), the integral 
 \begin{align*}
   \CI_{\CM}:= \int_{0}^{\infty}e\left(\frac{xNy}{qC}\pm \frac{3(m_2m_1^2Ny)^{1/3}}{q}\right)V(y) dy 
\end{align*}
 is negligibly small unless $(m_1^2m_2)\ll \frac{N^{\varepsilon}Q^3}{N}$.
  In this range, we can treat the $\CI_\CM$-integral as a smooth function. Similarly, $\CI_\CN$-integral is negligibly small unless $(n_1^2n_2)\ll \frac{N^{\varepsilon}Q^3}{N}$ and in that range, $\CI_\CN$ is a smooth function. Hence, for $N^{-2028}\ll X\ll \frac{N^{\varepsilon}QC}{N}$, we have  
  \begin{align}
      |\CI(m,n,q,X)|\ll X\cdot \delta\left((m_1^2m_2)\ll \frac{N^{\varepsilon}Q^3}{N}\right)\cdot \delta\left((n_1^2n_2)\ll \frac{N^{\varepsilon}Q^3}{N}\right).
  \end{align}
Here, we have used (\ref{gqxintegral}) to bound the $L^1$ norm of the $x$-integral. 

At this point, we evaluate $S$ (\ref{msum}). By Lemma \ref{rambound}, $S_\CM$ can be bounded by 
\begin{align*}
    S_{\CM}\ll &\frac{N^{2/3}}{q}\Bigg|\underset{n_1^2n_2\ll q^3N^{\varepsilon}/N}{\sum_{n_1|q}\sum_{n_2>0}}\frac{A(n_2,n_1)}{(n_1^2n_2)^{1/3}}\cdot n_1\cdot S(\overline h,\pm n_2; qn_1^{-1})\cdot \CI_\CN\Bigg|
    \ll \frac{N^{2/3}}{\sqrt{q}}\cdot \frac{q^{3/2}}{\sqrt{N}}\cdot \frac{q^{1/2}}{N^{1/6}}\sim q^{3/2}. 
\end{align*}
Similarly, $S_\CN\ll q^{3/2}$. Hence, 
\begin{align}
    S_{X\leq \frac{QC}{N}}\ll &\sup_{X\ll \frac{QC}{N}}\frac{H}{CQ}\sum_{q\sim Q}\sum_{|h|\ll \frac{QN^{\varepsilon}}{H}}\cdot X\cdot S_\CM\cdot S_\CN
    \ll\frac{Q^5}{N}.\label{SXsmall}
\end{align}
At this stage, for the benefit of further computations we introduce a new notation, $K:=N/C^2$. This implies that $C=\sqrt{N/K}$ and thus we have 
\begin{align}S'_{X\text{ small}}\ll \frac{N^{3/2+\varepsilon}}{K^{5/2}}.\label{SwhenXsmall}\end{align}
 For the rest of the paper, we will assume $X\geq \frac{QCN^{\varepsilon}}{N}$.
 \\
\subsection{$X\geq \frac{QCN^{\varepsilon}}{N}$} When $X\geq \frac{QCN^{\varepsilon}}{N}$, we will now evaluate $I_\CM$ and $I_\CN$. We recall that
$$\CI(m,n,q,x)=\int g(q,x)V(x)\cdot \CI_\CM\cdot \CI_\CN ~dx.$$
 
\begin{lemma}\label{I(m,n,q,X)lemma} We have
   \begin{align}\CI(m,n,q,X)\asymp&\frac{QC}{NX}\cdot V\left(\frac{m_1^2m_2}{X^{3}N^2/C^3}\right)V\left(\frac{n_1^2n_2}{X^{3}N^2/C^3}\right)\nonumber\\&\times  \int_{x}e\left(\frac{2\sqrt{C}(\sqrt{n_1^2n_2}-\sqrt{m_1^2m_2})}{q\sqrt{x}}\right) 
    \times g(q,x)V\left(\frac{x}{X}\right) dx +O(N^{-2025}).\label{I(m,n,q,X)eqn}\end{align}
\end{lemma}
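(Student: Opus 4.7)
The plan is to evaluate the inner integrals $\CI_\CM$ and $\CI_\CN$ separately by stationary phase analysis, then substitute back into (\ref{I(m,n,q,X)defn}). The key observation is that in the regime $X\geq QCN^\varepsilon/N$, each integral has a phase whose main oscillation, driven by the $xN/(qC)$ term, has size at least $N^\varepsilon$, which makes Lemma \ref{StationaryPhase1} applicable with a genuine stationary point.

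For $\CI_\CM$, write the phase as $f(y)=\frac{xNy}{qC}\pm\frac{3(m_1^2m_2Ny)^{1/3}}{q}$, so $f'(y)=\frac{xN}{qC}\pm\frac{(m_1^2m_2N)^{1/3}}{qy^{2/3}}$. Since $x,y>0$, the $+$ sign yields $|f'(y)|\geq xN/(qC)\gg N^\varepsilon$ throughout $[1,2]$, so by Lemma \ref{repeatedint} this branch contributes $O(N^{-A})$. For the $-$ sign, the unique solution of $f'(y_0)=0$ is $y_0=(m_1^2m_2)^{1/2}C^{3/2}/(x^{3/2}N)$, which lies in the support $[1,2]$ precisely when $m_1^2m_2\asymp X^3N^2/C^3$, yielding the cutoff $V\!\left(\tfrac{m_1^2m_2}{X^3N^2/C^3}\right)$. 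A direct substitution gives
$$f(y_0)=-\tfrac{2\sqrt{m_1^2m_2\,C}}{q\sqrt{x}},\qquad f''(y_0)\asymp\tfrac{(m_1^2m_2N)^{1/3}}{q}\asymp\tfrac{XN}{QC}.$$
With $\Omega_f,\Omega_g,L\sim 1$, $\Theta_g\sim 1$, $\Theta_f\sim XN/(QC)\geq N^\varepsilon$, the hypotheses (\ref{8.7}) are satisfied, and Lemma \ref{StationaryPhase1} delivers
$$\CI_\CM\asymp\sqrt{\tfrac{qC}{XN}}\,V\!\left(\tfrac{m_1^2m_2}{X^3N^2/C^3}\right)e\!\left(-\tfrac{2\sqrt{m_1^2m_2\,C}}{q\sqrt{x}}\right)+O(N^{-A}).$$

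An entirely analogous analysis of $\CI_\CN$, in which the sign of the $xN/(qC)$-term is flipped, selects the $+$ sign of the cube-root term (the $-$ sign giving a non-stationary phase handled by Lemma \ref{repeatedint}) and produces
$$\CI_\CN\asymp\sqrt{\tfrac{qC}{XN}}\,V\!\left(\tfrac{n_1^2n_2}{X^3N^2/C^3}\right)e\!\left(+\tfrac{2\sqrt{n_1^2n_2\,C}}{q\sqrt{x}}\right)+O(N^{-A}).$$
Multiplying these two asymptotics, using $q\sim Q$ so that the magnitude prefactor becomes $\frac{qC}{XN}\asymp\frac{QC}{XN}$, and inserting the product into (\ref{I(m,n,q,X)defn}), we recover the claimed formula, where the $x$-integration against $g(q,x)V(x/X)$ survives because both this weight and the residual phase $2\sqrt{C}(\sqrt{n_1^2n_2}-\sqrt{m_1^2m_2})/(q\sqrt{x})$ still depend on $x$. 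The principal technical point is verifying the parameter inequalities (\ref{8.7}) uniformly in the relevant ranges and checking that the subleading terms in (\ref{8.10})--(\ref{8.11}) can be absorbed into the smooth weights; both reduce directly to the standing hypothesis $X\geq QCN^\varepsilon/N$.
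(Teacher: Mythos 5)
Your argument is correct and follows the same route as the paper: split the $\pm$ branches, kill the non-stationary branch of each of $\CI_\CM,\CI_\CN$ by repeated integration by parts (Lemma \ref{repeatedint}), and apply Lemma \ref{StationaryPhase1} to the stationary branch, with the prefactors multiplying to $QC/(XN)$ and the stationary-point condition $y_0\in\mathrm{supp}\,V$ giving the cutoffs $V(m_1^2m_2/M_0)$, $V(n_1^2n_2/M_0)$. You have simply spelled out the explicit computations of $y_0$, $f(y_0)$, $f''(y_0)$, and the verification of (\ref{8.7}) that the paper leaves implicit.
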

\begin{proof}
We start with the expression of $\CI(m,n,q,X)$ derived from (\ref{Itriple integral}) and (\ref{I(m,n,q,X)defn}). If we write 
 $a=\frac{xN}{qC} \text{ and } b=\left(\frac{m_2m_1^2N}{q^3}\right)^{1/3},$
 then the phase function $f(y)$ of the oscillatory integral is $f(y)=ay\pm 3by^{1/3}$. 
As $X$ is taken to be positive and as $a\geq N^{\varepsilon}$ implies that $a+by^{-2/3}\geq N^{\varepsilon}$, the `+' case would give negligible contribution by repeated integration by parts (Lemma \ref{repeatedint}). Hence, we will only consider `-' case. In that scenario, by the stationary phase analysis (Lemma \ref{StationaryPhase1}) we get
\begin{align}
     \CI_\CM
     \asymp &\sqrt{\frac{QC}{XN}}e\left(-\frac{2\sqrt{m_1^2m_2C}}{q\sqrt{x}}\right)V\left(\frac{m_1^2m_2}{X^{3}N^2/C^3}\right)+O(N^{-2025}).
\end{align}
We similarly evaluate $\CI_\CN$, where the $-$ case will be negligibly small by Lemma \ref{repeatedint} and the $+$ case will have a stationary point. Hence, through a similar evaluation, we get 
\begin{align*}
    \CI_\CN\asymp &\sqrt{\frac{QC}{XN}}e\left(\frac{2\sqrt{n_1^2n_2C}}{q\sqrt{x}}\right)V\left(\frac{n_1^2n_2}{X^{3}N^2/C^3}\right)+O(N^{-2025}).
\end{align*}
This concludes the proof of the lemma. 
\end{proof}
\subsection{The $x$-integral}\label{subsec.xint}
We start with recalling the consideration $ X\geq \frac{QCN^{\varepsilon}}{N}$. Let us denote the dyadic size of $m_1^2m_2$ and $n_1^2n_2$ by $\mathbf{M_0(X):=X^{3}N^2/C^3}.$ In the $x$-integral of (\ref{I(m,n,q,X)eqn}), i.e.,
\begin{align}
   \CI_{\CX}:= \int_{x}e\left(\frac{2\sqrt{C}(\sqrt{n_1^2n_2}-\sqrt{m_1^2m_2})}{q\sqrt{x}}\right) 
    \times g(q,x)V\left(\frac{x}{X}\right) dx,\label{I_cX}
\end{align}
our aim will be to transform the structure of the phase function such that it would be a linear function of $n_1^2n_2$ and $m_1^2m_2$. 

\begin{lemma}\label{linearizationlemma}
    Let $Y_0:=\frac{C^2}{QX^2N}$. Let $s=c_1+it_1$ and $z=c_2+it_2$ for some $c_1,c_2>1$. Let $V(x)$ be some smooth function supported in $[1,2]$ satisfying $V^j(x)\ll N^{j\varepsilon}$ and $V_0(x/X):=g(q,x)V(x/X)$. Then 
    \begin{align}
    \CI_\CX\asymp &\frac{1}{2\pi^2}\int\limits_{|t_1|\ll N^{\varepsilon}}\int\limits_{|t_2|\ll N^{\varepsilon}} \hat V_0(s)\frac{\Gamma(-2s+2+z)\Gamma(-z)}{\Gamma(-2s+2)} \nonumber\\
    & \times \frac{X}{Y_0} \int e(y(n_1^2n_2-m_1^2m_2))V\left(\frac{y}{Y_0}\right)dy~~~ V(\cdots ) ~~ds dz+O(N^{-A}).\label{Linearizationmaineqn}
\end{align}
\end{lemma}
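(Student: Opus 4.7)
The aim is to rewrite $\CI_\CX$ so that the phase becomes linear in $n := n_1^2n_2$ and $m := m_1^2m_2$, at the cost of a double Mellin--Barnes contour integral that separates the $n$- and $m$-dependence. The first step is the substitution $y = \frac{2\sqrt C}{q\sqrt x(\sqrt n+\sqrt m)}$, which (after using the identity $\sqrt n - \sqrt m = (n-m)/(\sqrt n + \sqrt m)$) transforms the integral into
$$\CI_\CX = \frac{8C}{q^2(\sqrt n+\sqrt m)^2}\int e(y(n-m))\,V_0\!\left(\frac{4C}{Xq^2(\sqrt n+\sqrt m)^2 y^2}\right)\frac{dy}{y^3}.$$
The oscillatory factor is now $e(y(n-m))$, linear in $n-m$, and the new variable $y$ has the natural scale $Y_0^* = 2\sqrt C/(q\sqrt X(\sqrt n+\sqrt m)) \asymp Y_0$.

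To separate the remaining $(n,m,q)$-dependence still hidden in the argument of $V_0$, I would apply Mellin inversion $V_0(t) = \frac{1}{2\pi i}\int_{(c_1)}\hat V_0(s)\,t^{-s}\,ds$. Exchanging orders of integration extracts a factor $X^s(4C)^{1-s}q^{2s-2}(\sqrt n + \sqrt m)^{2s-2}$ together with $y^{2s-3}$ inside the $y$-integral; rescaling $y \mapsto Y_0 u$ and absorbing the bounded factor $u^{2s-3}$ into a new smooth bump lets us rewrite the inner integral as $(X/Y_0)\int e(y(n-m))V(y/Y_0)\,dy$, where the new $V$ is smooth with derivatives bounded by $N^{O(\varepsilon)}$ uniformly for $|\Im s|\ll N^\varepsilon$. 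The last nonseparated factor $(\sqrt n + \sqrt m)^{2s-2} = n^{s-1}(1+\sqrt{m/n})^{2s-2}$ is resolved by the standard Mellin--Barnes expansion
$$(1+w)^{2s-2} = \frac{1}{2\pi i}\int_{(c_2)}\frac{\Gamma(-2s+2+z)\Gamma(-z)}{\Gamma(-2s+2)}w^z\, dz,$$
valid for $c_2$ in an appropriate strip. This produces precisely the gamma kernel in \eqref{Linearizationmaineqn} together with factors $n^{s-1-z/2}m^{z/2}$; these, combined with the original cutoffs $V(n/M_0(X))V(m/M_0(X))$ and powers of $q$, are absorbed into the single smooth weight denoted $V(\cdots)$.

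The contour truncation to $|t_1|,|t_2|\ll N^\varepsilon$ is routine. Since $t^j V_0^{(j)}(t) \ll N^{j\varepsilon}$ (using \eqref{gqxderivative}), repeated integration by parts in the Mellin transform gives $\hat V_0(s) \ll_j (1+|t_1|)^{-j} N^{j\varepsilon}$, and Stirling's formula yields exponential decay of the gamma kernel in $|t_2|$ away from $|t_2|\lesssim |t_1|$. The tails contribute $O(N^{-A})$ for any $A>0$.

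The main technical obstacle is to verify that the catch-all weight $V(\cdots)$, which aggregates the $s,z$-dependent power-law factors in $n,m,q$ together with the original cutoffs $V(n/M_0(X))V(m/M_0(X))$, remains a genuine smooth function of $(n,m,q)$ with all partial derivatives bounded by $N^{O(\varepsilon)}$ uniformly on the truncated contours. This uniformity is essential for the subsequent Poisson summations and the duality argument to proceed. The verification is routine but lengthy, and is presumably the content of Appendix \ref{appen.lin}.
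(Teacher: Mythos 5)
Your proof takes essentially the same route as the paper's: the substitution $y=\frac{2\sqrt C}{q\sqrt x(\sqrt n+\sqrt m)}$ to linearize the phase (noting $\sqrt n-\sqrt m=(n-m)/(\sqrt n+\sqrt m)$), Mellin inversion of $V_0$, a Mellin--Barnes expansion of the remaining $(\sqrt n+\sqrt m)^{2s-2}$ factor to separate $n$ from $m$, truncation of both contours via rapid decay of $\hat V_0$ and Stirling's formula, and absorption of the residual power-law factors into a catch-all smooth weight of the sort the paper allows. The paper packages the Mellin inversion and the Mellin--Barnes step together as Lemma \ref{MellinBarnesLemma} in Appendix \ref{appen.lin} (applied with $a=b=-2$) and inserts the artificial bump $V(y/Y_0)$ before invoking that lemma rather than after, but these are cosmetic reorderings; your sketch is correct and matches the paper's argument.
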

\begin{proof}
We note that 
$$\frac{\partial^j}{\partial x^j}g(q,x)V(x/X)\ll _jX^{-j}N^{j\varepsilon}.$$
Thus, we will redefine the smooth weight function of (\ref{I_cX}) as $$V_0(x/X):=g(q,x)V(x/X),$$
where $V_0(x)$ is supported in $[1,2]$ and satisfies $V_0^j(x)\ll N^{j\varepsilon}$. So we start with 
\begin{align}
    \CI_{\CX}:= \int_{x}e\left(\frac{2\sqrt{C}(\sqrt{n_1^2n_2}-\sqrt{m_1^2m_2})}{q\sqrt{x}}\right) 
    V_0\left(\frac{x}{X}\right) dx,
\end{align}
for any smooth $V_0(x)$, supported in $[1,2]$ and satisfies $V_0^j(x)\ll N^{j\varepsilon}$.
Then we can change the variable $x$ to $y$, where 
$$x=\frac{Y^2X}{y^2}, ~~~\text{where}~~~ \mathbf{Y:=\frac{2\sqrt{C}}{q\sqrt{X}(\sqrt{n_1^2n_2}+\sqrt{m_1^2m_2})}}, ~~dx=-2Y^2Xy^{-3}dy.$$
 With this change of variable, the $\CI_\CX$-integral transforms into 
\begin{align}
    \CI_\CX=-2Y^2X\int_{y}e(y(n_1^2n_2-m_1^2m_2))\cdot y^{-3}\cdot V_0\left(\frac{Y^2}{y^2}\right)dy.\label{IXfirstafterlinear}
\end{align}
Though the new phase function is linear in $m_1^2m_2$ and $n_1^2n_2$, we have $(\sqrt{n_1^2n_2}+\sqrt{m_1^2m_2})$ present inside the term $Y$, and we cannot separate them directly to apply A.M.-G.M. inequality.

Now, we recall the dyadic ranges of $q, m_1^2m_2, n_1^2n_2$ and $x$
$$q\sim Q,~~m_1^2m_2, n_1^2n_2\sim M_0(X)=\frac{X^3N^2}{C^3},~~x\sim X.$$
Thus, 
$$Y=\frac{2\sqrt{C}}{q\sqrt{X}(\sqrt{n_1^2n_2}+\sqrt{m_1^2m_2})}\sim \frac{\sqrt{C}C^{3/2}}{Q\sqrt{X}X^{3/2}N}\sim \frac{C^2}{QX^2N}\sim Y_0, ~~\text{where}~~\mathbf{Y_0:=\frac{C^2}{QX^2N}}.$$
So, we have $c_1<\frac{Y}{Y_0}<c_2$ for some absolute constants $c_1,c_2>0$. As the smooth function $V_0(x)$ was supported in $[1,2]$, we also get 
$y\sim Y\sim Y_0$. So, we can artificially introduce a compactly supported smooth function $V(x)$ (identically $1$ in the range $x\in [c_1,c_2]$ and satisfying $v^{(j)}(x)\ll N^{j\varepsilon}$) into the integral in the form $V(y/Y_0)$:
\begin{align}
\CI_\CX=   -2X\int \frac{1}{y^3}e(y(n_1^2n_2-m_1^2m_2))\times Y^2V_0\left(\frac{Y^2}{y^2}\right)\times  V\left(\frac{y}{Y_0}\right)dy+O(N^{-A}).\label{IXsecondafterlinear}
 \end{align}

 But term of the form $(\sqrt{n_1^2n_2}+\sqrt{m_1^2m_2})$ is present inside the smooth function $V(Y^2/y^2)$ in (\ref{IXfirstafterlinear}). Hence, we would apply the Mellin-Barnes representation (Lemma \ref{MellinBarnesLemma}) to separate them. 
 In the notation of Lemma \ref{MellinBarnesLemma}, let $A=\sqrt{n_1^2n_2}, \ B=\sqrt{m_1^2m_2},\  \CX=A+B$. Then we have
$$Y^2V_0\left(\frac{Y^2}{y^2}\right)=\frac{4C}{q^2X(A+B)^2}V_0\left((A+B)^{-2}\cdot \frac{4C}{q^2Xy^2}\right)=\frac{4C}{q^2X}\cdot \CX^{-2}V_0(\CX^{-2}Z),$$
where $Z=\frac{4C}{q^2Xy^2}$. 
Then by Lemma \ref{MellinBarnesLemma}, we get 
\begin{align*}
    Y^2V_0\left(\frac{Y^2}{y^2}\right)=&\frac{4C}{q^2X}\cdot \CX^{-2}V_0(\CX^{-2}Z)\\
    =& \frac{-1}{4\pi^2}\int\limits_{|t_1|\ll N^{\varepsilon}}\int\limits_{|t_2|\ll N^{\varepsilon}} \hat V_0(s)\frac{\Gamma(-2s+2+z)\Gamma(-z)}{\Gamma(-2s+2)}\nonumber\\
    &\times\left(\frac{4C}{q^2X}\right)^{1-s}Y_0^{2s}M_0^{z/2+s-1-z/2} \cdot \frac{y^{2s}}{Y_0^{2s}}\cdot \frac{(n_1^2n_2)^{z/2}}{M_0^{z/2}}\cdot \frac{(m_1^2m_2)^{s-1-z/2}}{M_0^{s-1-z/2}}dsdz+O_A(N^{-A}).
\end{align*}
Here $s=c_1+it_1$ and $z=c_2+it_2$ for some suitably chosen $c_1 $ and $c_2$. We recall that
$$q\sim Q, ~~m_1^2m_2,n_1^2n_2\sim M_0,~~y\sim Y_0.$$As $t_1\ll N^{\varepsilon}$ and $t_2\ll N^{\varepsilon}$, the following terms can be absorbed into their respective smooth functions: $\left(\frac{q}{Q}\right)^{2s},\ \left(\frac{y}{Y_0}\right)^{2s},\ \left(\frac{n_1^2n_2}{M_0}\right)^{z/2}$ and $\left(\frac{m_1^2m_2}{M_0}\right)^{s-1-z/2}$.  
We denote them by $V(\cdots)$ and write 
\begin{align}
    Y^2V_0\left(\frac{Y^2}{y^2}\right)&=\frac{-1}{4\pi^2}\int\limits_{|t_1|\ll N^{\varepsilon}}\int\limits_{|t_2|\ll N^{\varepsilon}} \hat V_0(s)\frac{\Gamma(-2s+2+z)\Gamma(-z)}{\Gamma(-2s+2)}\left(\frac{4C}{Q^2X}\right)^{1-s} Y_0^{2s}(M_0)^{s-1}\cdot V(\cdots )dsdz\nn\\
    &= Y_0^2   \frac{-1}{4\pi^2}\int\limits_{|t_1|\ll N^{\varepsilon}}\int\limits_{|t_2|\ll N^{\varepsilon}} \hat V_0(s)\frac{\Gamma(-2s+2+z)\Gamma(-z)}{\Gamma(-2s+2)} \cdot  V(\cdots )ds dz.\nn
\end{align}
Here we have used the fact that $Y_0=\frac{2\sqrt{C}}{Q\sqrt{X}M_0}$. Once we put this expression back into (\ref{IXsecondafterlinear}) and absorb $\frac{Y_0^3}{y^3}$ into the smooth function $V(y/Y_0)$, we get our required expression of $\CI_\CX$, 
concluding the proof of the lemma.
\end{proof}
Putting (\ref{Linearizationmaineqn})into (\ref{Imnq}) and putting it back into (\ref{Sbeforelinear}), we get 
\begin{align}
    S\ll &\frac{HN^{\varepsilon}}{CQ}\sum_{\pm}\sum_{\substack{\frac{QCN^{\varepsilon}}{N}\leq X\leq N^{\varepsilon}\\\text{dyadic }} } \frac{N^{4/3}}{Q^2M_0^{2/3}}\cdot  \frac{Q^2X^2}{C} \int\limits_{|t_1|\ll N^{\varepsilon}}\int\limits_{|t_2|\ll N^{\varepsilon}} \hat V_0(s)\frac{\Gamma(-2s+2+z)\Gamma(-z)}{\Gamma(-2s+2)}\nonumber \\& \int_{y\sim Y_0}\sum_{q\sim Q} \sumstar_{h\ll \frac{QN^{\varepsilon}}{H}} \times {\sum_{m_1|q}\sum_{m_2>0}}A(m_2,m_1)m_1\cdot S(\overline h,\pm m_2; qm_1^{-1})e(m_1^2m_2y)V\left(\frac{m_1^2m_2}{M_0}\right)\nonumber\\
    &\times {\sum_{n_1|q}\sum_{n_2>0}}\overline A(n_2,n_1)m_1\cdot S(\overline h,\pm n_2; qn_1^{-1})e(-n_1^2n_2y)V\left(\frac{n_1^2n_2}{M_0}\right)V(\cdots )dy  ds dz.
\end{align}
We note that the double integral over $s$ and $z$ is bounded by $N^{\epsilon}$, i.e.,
$$ \int\limits_{|t_1|\ll N^{\varepsilon}}\int\limits_{|t_2|\ll N^{\varepsilon}} \hat V_0(s)\frac{\Gamma(-2s+2+z)\Gamma(-z)}{\Gamma(-2s+2)}dsdz\ll N^{\varepsilon}.$$
Though the smooth functions of other variables might depend on $s$ and $z$, in reality, our smooth functions $V(x)$ are arbitrary, only satisfying the decay condition $V^j(x)\ll N^{j\varepsilon}$. Thus, we can replace the above double integral by its upper bound. Then using AM-GM inequality, we can bound $S$ by 
\begin{align}
    &\sup_{\frac{QCN^{\varepsilon}}{N}\leq X\leq N^{\varepsilon}} \frac{HN^{4/3+\varepsilon}X^2}{C^2QM_0^{2/3}}\cdot\int_{y\sim Y_0}\sum_{q\sim Q}\sumstar_{h\ll \frac{QN^{\varepsilon}}{H}} \Bigg|\underset{m_1^2m_2\sim M_0}{\sum_{m_1|q}m_1\sum_{m_2>0}}A(m_2,m_1)\cdot S(\overline h, m_2; qm_1^{-1})e(m_1^2m_2y)\Bigg|^2.\nonumber\\
\end{align}
Here, we have only considered the $+$ case as the other cases follow similar analysis and lead to the same bound. 
As $m_1$ runs over the divisors of $q$, i.e., $O(Q^{\varepsilon})$-many elements, by Cauchy's inequality, we can write 
$\big|\sum_{m_1|q}F(m_1)\big|^2\leq C^{\varepsilon}\sum_{m_1|q}|F(m_1)|^2 $. 
Then we make a change of variable $q\mapsto qm_1$ in order to exchange the order of the $q$ and the $m_1$-sum and take a smooth dyadic partition of the $h$-sum. Finally, we get 
\begin{lemma}
\begin{align}
    S\ll_{\varepsilon} &\sup_{\frac{QCN^{\varepsilon}}{N}\leq X\leq N^{\varepsilon}}\sup_{H'\ll\frac{QN^{\varepsilon}}{H}}  \frac{1}{M_0}\cdot\frac{HN^{2+\varepsilon}X^3}{C^3Q}\sum_{m_1\ll Q}m_1^2\nonumber\\
    &\times \int_{y\sim Y_0}\sum_{q\sim Q/m_1}\sumstar_{\substack{h\sim H\\(h,q)=1}} \Bigg|\sum_{m_2\sim M_0/m_1^2}A(m_2,m_1)\cdot S(\overline h, m_2; q)e(m_1^2m_2y)\Bigg|^2.\label{S.before.large.sieve}
\end{align}
\end{lemma}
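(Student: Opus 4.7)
The plan is to simply consolidate the computations already set up in the preceding subsections: substitute the linearized form of the $x$-integral (Lemma \ref{linearizationlemma}) into $\mathcal{I}(m,n,q,X)$ from Lemma \ref{I(m,n,q,X)lemma}, feed the result back into the expression \eqref{Sbeforelinear} for $S$, and finally combine the Voronoi-dualized $m$- and $n$-sums via AM--GM so that only a single square survives.

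First I would insert the output of Lemma \ref{linearizationlemma} into \eqref{I(m,n,q,X)eqn}, replacing the $x$-integral $\mathcal{I}_\mathcal{X}$ by
\[
\frac{X}{Y_0}\int e\bigl(y(n_1^2 n_2 - m_1^2 m_2)\bigr)V(y/Y_0)\,dy,
\]
weighted by $\hat V_0(s)\,\Gamma(-2s+2+z)\Gamma(-z)/\Gamma(-2s+2)$ and a product of smooth cutoffs in $m_1^2 m_2$, $n_1^2 n_2$, and $q$. Feeding this back through the expressions \eqref{msumaftervoronoi}, \eqref{nsumaftervoronoi} for $S_\mathcal{M}$, $S_\mathcal{N}$, and combining with the prefactors $N^{2/3}/q$ and the weights from $M_0 = X^3 N^2/C^3$, yields a sum indexed by $q, h, m_1, m_2, n_1, n_2$ with a $y$-integral over $y\sim Y_0$ and a $(s,z)$-integral over $|t_1|,|t_2|\ll N^\varepsilon$. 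The $(s,z)$-integral factors out of the $(q,h,m,n,y)$-sum because the residual smooth cutoffs $V(\cdots)$ depend on $s,z$ only through factors like $(q/Q)^{2s}, (y/Y_0)^{2s}, (m_1^2 m_2/M_0)^{s-1-z/2}, (n_1^2 n_2/M_0)^{z/2}$, each of which merely contributes an $N^\varepsilon$-smooth weight along its respective support. Hence one may replace the $(s,z)$-integral by its trivial bound, using Stirling for the Gamma ratio and rapid decay of $\hat V_0$ on vertical lines, to gain a factor $N^\varepsilon$.

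Next, I apply AM--GM to the product $|S_\mathcal{M} \cdot S_\mathcal{N}|$: since the $m$- and $n$-side sums are structurally identical (same dyadic range $M_0$, same Kloosterman sums $S(\bar h, \cdot\,; qm_1^{-1})$ and $S(\bar h, \cdot\,; qn_1^{-1})$, and signs that may be absorbed by flipping $h\mapsto -h$ if needed), bounding the product by $\tfrac12(|S_\mathcal{M}|^2 + |S_\mathcal{N}|^2)$ reduces to a single sum of squares. The outer divisor sum $\sum_{m_1\mid q}$ has length $O(Q^\varepsilon)$, so Cauchy--Schwarz on that sum (at cost $N^\varepsilon$) pulls the square inside:
\[
\Bigl|\sum_{m_1\mid q} m_1 \!\!\sum_{m_1^2 m_2\sim M_0}\!\!A(m_2,m_1)\,S(\bar h, m_2; qm_1^{-1})\,e(m_1^2 m_2 y)\Bigr|^2
\ll N^\varepsilon \sum_{m_1\mid q} m_1^2 \bigl|\cdots\bigr|^2.
\]

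Finally, I swap the order of the $q$- and $m_1$-sums and change variables $q\mapsto qm_1$, which replaces $q\sim Q$ by $q\sim Q/m_1$ and the modulus $qm_1^{-1}$ of the Kloosterman sum by $q$; the coprimality $(h,qm_1)=1$ is preserved by restricting to $(h,q)=1$. A smooth dyadic partition in $h$ replaces the range $|h|\ll QN^\varepsilon/H$ by a sup over dyadic pieces $h\sim H'\ll QN^\varepsilon/H$, at the expense of a $\log N$. Collecting all prefactors $\tfrac{HN^\varepsilon}{CQ}\cdot \tfrac{N^{4/3}}{Q^2 M_0^{2/3}}\cdot \tfrac{Q^2 X^2}{C}$ and simplifying using $M_0 = X^3 N^2/C^3$ gives the overall constant $\tfrac{1}{M_0}\cdot \tfrac{HN^{2+\varepsilon}X^3}{C^3 Q}$ appearing in \eqref{S.before.large.sieve}. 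The only delicate point is verifying that the $(s,z)$-dependence of the smooth weights is genuinely harmless, i.e.\ that the uniform bound $V^{(j)}(x)\ll_j N^{j\varepsilon}$ survives uniformly in $|t_1|, |t_2|\ll N^\varepsilon$; this follows because in each factor $(x/X_0)^{s}$ the imaginary part $t_i$ enters only through a bounded phase on the dyadic support, which is already absorbed into the $N^\varepsilon$ loss.
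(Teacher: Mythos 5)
Your proposal is correct and follows essentially the same route as the paper: substitute the linearization lemma and Lemma \ref{I(m,n,q,X)lemma} into \eqref{Sbeforelinear}, pull the Mellin--Barnes $(s,z)$-integral out at a cost of $N^\varepsilon$ since the residual $s,z$-dependence in the smooth weights remains $N^\varepsilon$-benign, apply AM--GM to $|S_\CM\cdot S_\CN|$, Cauchy--Schwarz on the $O(Q^\varepsilon)$-term divisor sum over $m_1\mid q$, change variables $q\mapsto qm_1$, and take a smooth dyadic partition in $h$. The prefactor bookkeeping $\frac{HN^{4/3+\varepsilon}X^2}{C^2QM_0^{2/3}} = \frac{1}{M_0}\cdot\frac{HN^{2+\varepsilon}X^3}{C^3Q}$ checks out using $M_0=X^3N^2/C^3$.
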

\section{ Duality principle and the second dualization}
In order to exchange the order of the summations, similar to \cite{ALM}, we apply the duality principle  (Lemma \ref{dualitylemma}) in (\ref{S.before.large.sieve}).  Then using Lemma \ref{anotherramanujanbound} we get the following upper bound of $S$
\begin{align}
     S\ll &\sup_{\frac{QCN^{\varepsilon}}{N}\leq X\leq N^{\varepsilon}}\sup_{H'\ll\frac{QN^{\varepsilon}}{H}}\cdot \frac{HN^{2+\varepsilon}X^3}{C^3Q}
     \cdot \sum_{m_1\leq Q}\sum_{d|m_1}\frac{|A(m_1/d,1)|^2}{d}\cdot \Delta,\label{Sjustafterlargesieve}
\end{align}
where the dual norm $\Delta$ is defined by
\begin{equation}
    \Delta:=\sup_{||\alpha(y,h,q)||_2=1}\sum_{m_2\sim\frac{M_0}{m_1^2}}\Bigg|\int_{y\sim Y_0}\sum_{q\sim Q/m_1} \ \sumstar_{h\sim H'}\alpha(y,q,h)S(\overline h, m_2; q)e(m_1^2m_2y)\Bigg|^2.
\end{equation}
Now we want to dualize the $m_2$-sum with the Poisson summation formula. So we open up the absolute square of $\Delta$ and get 
\begin{align}
    \Delta=\sup_{||\alpha||_2=1}\int\limits_{y_1\sim Y_0}\sum_{q_1\sim\frac{Q}{m_1}} \sum_{h_1\sim H'}\alpha(y_1,q_1,h_1)\int\limits_{y_2\sim Y_0}\sum_{q_2\sim\frac{Q}{m_1}} \sum_{h_2\sim H'}\overline \alpha(y_2, q_2,h_2) \cdot T_m,\label{Delta.before.Poisson}
\end{align}
where 
\begin{align}
    T_m:=\sum_{m_2}S(\overline h_1, m_2; q_1)S(\overline h_2, m_2; q_2) e(m_1^2m_2(y_1-y_2))V\left(\frac{m_2}{M_0/m_1^2}\right). \label{S_m}
\end{align}
\subsection{Poisson summation formula ($m_2$-sum)}
We can observe that the phase function of the exponential term in (\ref{S_m}) is linear in $m_2$. This is the direct consequence of the liniearization of the phase function in Lemma \ref{linearizationlemma}. As a result, the Poisson summation on the $m_2$-sum would dissolve any remaining analytic oscillation and would provide us with a restriction in terms of the $m$-sum (dual variable) and the integral over $y_1$ and $y_2$. So we apply the Poisson summation formula on the $m_2$-sum,  to derive the following lemma.\ref{linearizationlemma}
\begin{lemma}
    Let $T_m$ be as defined in (\ref{S_m}). Then we have 
    \begin{align}
    T_m\asymp \frac{M_0}{m_1^2}\sum_{m\ll Q^2Y_0N^{\varepsilon}}\mathcal{C}\cdot W\left(\frac{Q^2(y_1-y_2)-m}{Q^2/M_0}\right)+ O_A(N^{-A}), 
\end{align}
where 
\begin{align}
    \mathcal{C}=\frac{1}{q_1q_2}\sum_{\beta \bmod q_1q_2}S(\overline h_1, \beta;q_1)S(\overline h_2, \beta; q_2) e\left(\frac{\beta m}{q_1q_2}\right), \label{charsum1}
\end{align}
and $W(x)$ is some smooth function supported in $[-N^{\varepsilon}, N^{\varepsilon}]$. 
\end{lemma}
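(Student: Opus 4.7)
The key observation is that the coefficient function $m_2 \mapsto S(\bar{h}_1, m_2; q_1) S(\bar{h}_2, m_2; q_2)$ is periodic in $m_2$ modulo $q_1 q_2$, so the Poisson summation formula (\ref{Poissonsum}) applies directly with modulus $q = q_1 q_2$. Writing
$$f(m_2) = e(m_1^2 m_2 (y_1 - y_2))\, V\!\left(\frac{m_2}{M_0/m_1^2}\right)$$
and $C(m_2) = S(\bar{h}_1, m_2; q_1) S(\bar{h}_2, m_2; q_2)$, this immediately yields
$$T_m = \frac{1}{q_1 q_2} \sum_{m \in \mathbb{Z}} \left(\sum_{\beta \bmod q_1 q_2} C(\beta)\, e_{q_1 q_2}(m\beta)\right) \int_{\mathbb{R}} f(z)\, e\!\left(-\frac{mz}{q_1 q_2}\right) dz.$$
The inner sum over $\beta$ is exactly $q_1 q_2 \cdot \mathcal{C}$ with $\mathcal{C}$ as in (\ref{charsum1}), so the $(q_1 q_2)^{-1}$ prefactor cancels.

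Next I would analyze the integral transform. A change of variables $z = M_0 u / m_1^2$ brings it into the form
$$\frac{M_0}{m_1^2} \int V(u)\, e\!\left(u\left[M_0(y_1 - y_2) - \frac{m M_0}{m_1^2 q_1 q_2}\right]\right) du.$$
Since the phase is linear in $u$ and $V$ is smooth with support in $[1,2]$ satisfying $V^{(j)}(u) \ll N^{j\varepsilon}$, Lemma \ref{repeatedint} (repeated integration by parts) shows that the integral is negligibly small unless the bracketed coefficient of $u$ is $\ll N^{\varepsilon}$. Recalling $q_1, q_2 \sim Q/m_1$ and hence $m_1^2 q_1 q_2 \sim Q^2$, this condition reads
$$\left|Q^2(y_1 - y_2) - m\right| \ll \frac{Q^2}{M_0} N^{\varepsilon},$$
which I would encode as a smooth weight $W\bigl(\tfrac{Q^2(y_1-y_2) - m}{Q^2/M_0}\bigr)$ supported in $[-N^{\varepsilon},N^{\varepsilon}]$ (after absorbing the dyadic fluctuation of $m_1^2 q_1 q_2/Q^2$ into $W$). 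Within this range the $u$-integral is $O(1)$ up to a bounded smooth factor that can also be absorbed into $W$.

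Finally, the range of $y_1 - y_2 \ll Y_0$ together with the above restriction truncates the $m$-sum to $|m| \ll Q^2 Y_0 N^{\varepsilon}$, giving
$$T_m \asymp \frac{M_0}{m_1^2} \sum_{m \ll Q^2 Y_0 N^{\varepsilon}} \mathcal{C} \cdot W\!\left(\frac{Q^2(y_1-y_2) - m}{Q^2/M_0}\right) + O_A(N^{-A}),$$
as claimed. The only mildly delicate point is the bookkeeping to justify replacing $m_1^2 q_1 q_2$ by $Q^2$ in the normalization of $W$; since both quantities are dyadically of the same size and the weight function is arbitrary subject to a polynomial decay condition, the replacement is harmless and absorbed into the definition of $W$.
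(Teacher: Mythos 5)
Your proposal is correct and follows essentially the same route as the paper: Poisson summation modulo $q_1q_2$ applied to the $m_2$-sum, the same change of variables so that $V$ and a linear phase appear, then Lemma \ref{repeatedint} (repeated integration by parts) to localize $|Q^2(y_1-y_2)-m|\ll N^{\varepsilon}Q^2/M_0$ and truncate $m\ll Q^2 Y_0 N^{\varepsilon}$. The extra remark about absorbing the dyadic fluctuation of $m_1^2 q_1 q_2 / Q^2$ into the weight $W$ is a reasonable bookkeeping point that the paper handles implicitly.
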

\begin{proof}
We will apply the Poisson summation formula (\ref{Poissonsum}) on the $m_2$-sum (\ref{S_m}). We note that $S(\overline h_1, \beta;q_1)S(\overline h_2, \beta; q_2)$ is periodic modulo $q_1q_2$. Hence, by invoking the Poisson summation formula (\ref{Poissonsum}) (we denote the dual variable by $m$), we get 
\begin{align}
    T_m=\frac{M_0}{m_1^2q_1q_2}\sum_{m\in \mathbb{Z}}&\sum_{\beta\bmod q_1q_2}S(\overline h_1, \beta;q_1)S(\overline h_2, \beta; q_2)e\left(\frac{\beta m}{q_1q_2}\right)\nonumber\\&\int_{\bR}e(M_0y(y_1-y_2))e\left(\frac{-mM_0y}{m_1^2q_1q_2}\right)V\left(y\right)dy.\nonumber
\end{align}
We treat the $y$-integral by repeated integration by parts. By Lemma \ref{repeatedint}, the $y$-integral is negligibly small unless 
\begin{align}
    \left|M_0(y_1-y_2)-\frac{mM_0}{m_1^2q_1q_2}\right|\ll N^{\varepsilon}\iff \left|Q^2(y_1-y_2)-m\right|\ll \frac{N^{\varepsilon}Q^2}{M_0}.
\end{align}
In that range, the $y$-integral can be absorbed into the smooth functions. So, we have 
\begin{align}
    \int_{\bR}e(M_0y(y_1-y_2))e\left(\frac{-mM_0y}{m_1^2q_1q_2}\right)V\left(y\right)dy=W\left(\frac{Q^2(y_1-y_2)-m}{Q^2/M_0}\right)+O(N^{-A}),
\end{align}
where $W(x)$ is supported in $[-N^{\varepsilon}, N^{\varepsilon}]$. 
So $W\left(\frac{Q^2(y_1-y_2)-m}{Q^2/M_0}\right)$ also implies an weaker restriction $m\ll Q^2Y_0N^{\varepsilon}$. 
\end{proof}
\begin{remark}
    We will not use $W(\cdots)$ as a smooth function on $m, y_1$ or $y_2$. It will be used to give an effective measure of the support of the variables $m, y_1$ and $y_2$.  
\end{remark}
At this point, we will separate our analysis for `zero frequency' ($m=0$) and `non-zero frequency' ($m\neq 0$).

\subsection{Zero frequency}
When $m=0$, opening up each Kloosterman sums in $\mathcal{C}$ and executing the $\beta$-sum, we get the condition $q_1=q_2$ and the remaining sums simplifies into a Ramanujan sum $\mathfrak{c}_{q_1}(h_1-h_2)$. On the other hand the smooth function $W\left(\frac{Q^2(y_1-y_2)-m}{Q^2/M_0}\right)$ give a restriction in terms of $y_1$ and $y_2$-integral. In a nutshell, we have
\begin{align}
    \mathcal{C}=&\frac{1}{q_1q_2}\sum_{\beta \bmod q_1q_2}S(\overline h_1, \beta;q_1)S(\overline h_2, \beta; q_2)=\delta(q_1=q_2)\cdot \mathfrak{c}_{q_1}(h_1-h_2),
\end{align}
and 
\begin{align}T_{m=0}\asymp \frac{M_0}{m_1^2}\delta(q_1=q_2)\cdot \mathfrak{c}_{q_1}(h_1-h_2)W\left(\frac{(y_1-y_2)}{M_0^{-1}}\right).\label{Smzerofreq}\end{align}
$\mathfrak c_q(x)$, the Ramanujan sum modulo $q$ is bounded by $\mathfrak c_q(x)\ll (x,q)$ (see \cite[(3.5)]{Iwaniec2022-qo}). So for a fixed $h_1$, we write
\begin{align}\label{zerofreqchar}
   &\sum_{h_2\sim H'}\mathfrak{c}_{q_1}(h_1-h_2) \ll \sum_{h_2\sim H'}(q_1,h_1-h_2)
   \ll N^{\varepsilon}(H'+q_1).
\end{align}
We recall $H'\ll\frac{QN^{\varepsilon}}{H}$ and $q_1\sim\frac{Q}{m_1}$. Now for $m=0$, using (\ref{Smzerofreq}) and  (\ref{zerofreqchar}), we  bound $\Delta$ in (\ref{Delta.before.Poisson}) by  
\begin{align*}
    \Delta_{m=0}
    \ll_{\varepsilon} & \frac{M_0}{m_1^2}\sup_{||\alpha||_2=1}\Bigg| \int\limits_{y_1\sim Y_0}\sum_{q_1\sim\frac{Q}{m_1}} \sumstar_{h_1\ll \frac{QN^{\varepsilon}}{H}}|\alpha(y_1,q_1,h_1)|^2\int\limits_{y_2}W(M_0(y_1-y_2))\sumstar_{h_2\ll \frac{QN^{\varepsilon}}{H}}|\mathfrak{c}_{q_1}(h_1-h_2)|\Bigg|\\
    \ll_{\varepsilon}& \frac{1}{m_1^2}\cdot \left(\frac{Q}{m_1}+\frac{N^{\varepsilon}Q}{H}\right). 
\end{align*}
Here, we have used the AM-GM inequality to get $|\alpha_1\alpha_2|\ll |\alpha_1|^2+|\alpha_2|^2$ and only considered the $|\alpha_1|^2$ as the other term follows the same analysis. For brevity, we will often use the notation $\alpha_i$ to represent $\alpha(y_i,q_i,h_i)$. Finally, putting it into (\ref{Sjustafterlargesieve}), we get 
\begin{align}
    S_{m=0}\ll_{\varepsilon} & \sup_{\frac{QCN^{\varepsilon}}{N}\leq X\leq N^{\varepsilon}}\sup_{H'\ll\frac{QN^{\varepsilon}}{H}}\cdot \frac{HN^{2+\varepsilon}X^3}{C^3Q}
     \cdot \sum_{m_1\leq Q}\sum_{d|m_1}\frac{|A(m_1/d,1)|^2}{d}\cdot \Delta\nn\\
     \ll_{\varepsilon}& \sup_{\frac{QCN^{\varepsilon}}{N}\leq X\leq N^{\varepsilon}}\frac{HN^{2+\varepsilon}X^3}{C^3Q}\cdot Q\ll \frac{HN^{2+\varepsilon}}{C^3}\asymp HN^{1/2}K^{3/2}. \label{Sdiagonal}
\end{align}
\begin{remark}
    This bound is smaller than $HN$ as long as $$HN^{1/2}K^{3/2}\ll HN\iff K\ll N^{1/3}.$$
\end{remark}
\subsection{Non-zero frequency}\label{nonzerofreq}
In this section, we will simplify the character sum  (\ref{charsum1})
$$ \mathcal{C}=\frac{1}{q_1q_2}\sum_{\beta \bmod q_1q_2}S(\overline h_1, \beta;q_1)S(\overline h_2, \beta; q_2) e\left(\frac{\beta m}{q_1q_2}\right),$$
for the non-zero frequencies,  i.e., $m\neq 0$ in a method similar to \cite{pal2023secondmomentdegreelfunctions}. But before that we will define a few notations to handle the coprimality issues arising from all the variables that are not pairwise coprime. This would help us to apply the Chinese Remainder Theorem in the required places.  

\begin{notation}\label{firstnotation}
To analyze the character sum $\mathcal{C}$ for $m\neq 0$,we will define a new set of notations. Let $d_q:=(q_1,q_2)$, $v_1:=(q_1/d_q,d_q^{\infty})$ and $v_2:=(q_2/d_q,d_q^{\infty})$, where,
$$(a,b^{\infty}):=\prod_{p|b}\{p^{\nu}\ |\ p^{\nu}|a\text{ and  }p^{\nu+1}\nmid a\}. $$  
So $v_1$ and $v_2$ extracts all the prime power factors from $q_1/d_q$ and $q_2/d_q$ corresponding to the prime factors of $d_q$. Thus, if we denote $u_1:=\frac{q_1}{v_1d_q}$ and $u_2:=\frac{q_2}{v_2 d_q}$, we get 
$$q_1=d_qv_1u_1,\ q_2=d_qv_2u_2,\ (v_1,v_2)=1,\ (u_1,u_2)=1,\ (u_i, d_qv_1v_2)=1. $$

Let $d_q=d_0d_1d_2$ where $(d_0,v_1v_2)=1,\ d_1=(d_q,v_1^\infty),\ d_2=(d_q,v_2^\infty)$. We also observe that $d_1$ and $d_2$ are coprime to $\tilde m=\frac{m}{d_q}$, but $d_0$ may not be. Let $d_m=(\tilde m,d_0^{\infty})$ and $\Fm=\frac{m}{d_qd_m}$. Thus, we write 
$$mh_i=(\Fm h_i)\cdot (d_0d_m)\cdot d_1\cdot d_2.$$ 

So we can split $q_1$ and $q_2$ as $q_i=d_qv_iu_i$ and rewrite the sum over $q_1$ and $q_2$ as
$$\sum_{q_1\sim\frac{Q}{m_1}}\sum_{q_2\sim \frac{Q}{m_1}}\cdots\mapsto \sum_{d_q\leq Q}\underset{(v_1,v_2)=1}{\sum_{v_1|d_q^{\infty}}\sum_{v_2|d_q^{\infty}}}\sum_{u_1\sim \frac{Q}{m_1d_qv_1}}\sum_{u_2\sim \frac{Q}{m_1d_qv_2}}\cdots .$$
\end{notation}
Then as in \cite{pal2023secondmomentdegreelfunctions}, we deduce the following lemma.
\begin{lemma}
We have
\begin{align}
    \CC=\sumstar_{x_0\bmod d_0}~\sumstar_{x_1\bmod d_1}~\sumstar_{x_2\bmod d_2}e\left(\frac{\overline{u_1}u_2A_1}{mh_1}\right)e\left(\frac{\overline{u_2}u_1A_2}{mh_2}\right)U(\cdots),
\end{align}
where, 
\begin{align}\label{defA1A2}
    A_1\equiv \begin{cases}
        v_2\overline{v_1}&\pmod {\Fm h_1}\\
        v_2\overline{v_1}\overline{[1+\bar x_0d_m]}&\pmod {d_0d_m}\\
        v_2\cdot  \overline{\bar x_1+v_1}& \pmod {d_1}\\
        \overline{v_1x_2}&\pmod {d_2}
    \end{cases}~~~~~~\text{and}~~~~~~~
     A_2\equiv \begin{cases}
        v_1\overline{v_2}&\pmod {\Fm h_2}\\
        v_1\overline{v_2}[1+\bar x_0d_m]&\pmod {d_0d_m}\\
        \overline{v_2x_1}&\pmod {d_1}\\
        v_1 \cdot \overline{\bar x_2+v_2}& \pmod {d_2}
    \end{cases}.
\end{align}
\end{lemma}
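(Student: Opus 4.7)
The approach is to open both Kloosterman sums, execute the inner $\beta$-sum by orthogonality to obtain a single congruence condition on the Kloosterman variables, and then decompose that condition via the Chinese Remainder Theorem along the pairwise coprime blocks $u_1, u_2, \Fm h_1, \Fm h_2, d_0d_m, d_1, d_2$ coming from Notation \ref{firstnotation}; reciprocity is then used to transfer denominators from $q_1,q_2$ onto $mh_1,mh_2$. The argument parallels the one in \cite{pal2023secondmomentdegreelfunctions}.

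Concretely, writing $S(\overline{h_i},\beta;q_i)=\sumstar_{a_i\bmod q_i}e((\overline{h_i}a_i+\beta\overline{a_i})/q_i)$ for $i=1,2$ and using $\frac{\overline{a_1}}{q_1}+\frac{\overline{a_2}}{q_2}=\frac{q_2\overline{a_1}+q_1\overline{a_2}}{q_1q_2}$, orthogonality of additive characters modulo $q_1q_2$ collapses the $\beta$-sum and yields
\begin{align*}
\CC=\sumstar_{a_1\bmod q_1}\sumstar_{a_2\bmod q_2}e\left(\frac{\overline{h_1}a_1}{q_1}+\frac{\overline{h_2}a_2}{q_2}\right)\delta\bigl(q_2\overline{a_1}+q_1\overline{a_2}+m\equiv 0\bmod q_1q_2\bigr).
\end{align*}
Substituting $q_i=d_qv_iu_i$ and splitting the congruence componentwise via CRT, on each $u_i$-block $q_{3-i}$ is a unit and the congruence uniquely determines $a_i$. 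Plugging this back into $e(\overline{h_i}a_i/q_i)$ and applying reciprocity to move the $u_i$-part of the denominator onto $mh_i$ produces a factor of the form $e(\overline{u_i}u_{3-i}A_i/(mh_i))$, matching the exponentials in the statement. The residues $x_0,x_1,x_2$ parametrize the degrees of freedom left modulo $d_0,d_1,d_2$ after imposing the congruences on the remaining blocks.

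The explicit form of $A_1, A_2$ on each sub-modulus is then read off by recording which of $v_1,v_2$ or $\bar x_i+v_i$ is invertible there: on $\Fm h_i$ all relevant quantities are units and one obtains the clean $v_{3-i}\overline{v_i}$; on $d_0 d_m$ the common factor $d_m$ of $m$ and $d_0^{\infty}$ forces the corrective factor $[1+\bar x_0 d_m]$ upon expanding the reciprocity relation; on $d_1$ the variable $v_1$ is not a unit (as $d_1\mid v_1^{\infty}$), so reciprocity introduces $\overline{\bar x_1+v_1}$, and symmetrically on $d_2$. The weight $U(\cdots)$ collects the combinatorial coprimality indicators that survive these manipulations. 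The main obstacle is the bookkeeping on the $d_0d_m$-block: the coupling between the $d_m$-part of $m$ and the modulus $d_0d_m$ produces the nontrivial shift $[1+\bar x_0 d_m]$, and verifying it requires carefully tracking which inverses exist and how reciprocity behaves when the modulus and the variable share a common factor. Once the componentwise contributions are recombined by CRT, the claim follows.
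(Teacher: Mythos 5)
Your high-level plan is the same as the paper's (Appendix B): open both Kloosterman sums, execute the $\beta$-sum by orthogonality to get a congruence, decompose by CRT, evaluate the $u_1$- and $u_2$-blocks, apply reciprocity to move the $u_i$ denominators onto $mh_i$, and then read off $A_1,A_2$ on each prime cluster. However, several of the mechanisms you describe are wrong, and the derivation of the actual form of $A_1,A_2$ is not carried out.

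First, the CRT decomposition is misstated. The congruence coming from the $\beta$-sum lives modulo $q_1q_2=d_q^2v_1v_2u_1u_2$, whose coprime blocks are $u_1$, $u_2$, and $d_q^2v_1v_2$; it cannot be decomposed along $\Fm h_1,\ \Fm h_2,\ d_0d_m,\ d_1,\ d_2$, since those are not divisors of $q_1q_2$. The factorization $mh_1h_2 = \Fm h_1h_2\cdot d_0d_m\cdot d_1\cdot d_2$ (with the further split $d_q=d_0d_1d_2$, $d_m=(\tilde m, d_0^\infty)$, $\Fm = m/(d_qd_m)$) only becomes relevant in a second CRT stage applied to the residual character sums \emph{after} reciprocity has moved denominators to $mh_1$ and $mh_2$; the paper does this explicitly, splitting $\CC$ into the four clusters $\CC_h,\CC_0,\CC_1,\CC_2$. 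Second, the explanation of where $[1+\bar x_0 d_m]$ and $\overline{\bar x_1+v_1}$ come from is not correct. They do not arise from "expanding the reciprocity relation" or because "$v_1$ is not a unit." They arise from the $d_q^2v_1v_2$-block of the original CRT: there the $\beta$-sum produces a constrained double sum $\sumstar_{x_1\bmod d_qv_1}\sumstar_{x_2\bmod d_qv_2}$ with the relation $x_1v_2+x_2v_1+\tilde m\equiv 0\pmod{d_qv_1v_2}$, and after solving for one variable, changing variables (e.g.\ $x_{2,1}\mapsto x_1\tilde m$), and \emph{combining} the resulting exponential with the reciprocated $u_1$-block exponential $e(v_2u_2\bar u_1/(mh_1v_1))$ restricted to the relevant sub-modulus, one simplifies expressions like $(1-\overline{x_1v_1+1})/v_1=\overline{\bar x_1+v_1}$ and $(1-d_m\overline{x_0+d_m})/d_m=\overline{1+\bar x_0 d_m}$. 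Without this combination step the shifted inverses never appear. Third, $U(\cdots)$ is not a collection of coprimality indicators: it is the smooth weight absorbing the small reciprocity error terms such as $e(-v_2u_2/(u_1v_1mh_1))$, which are $O(1)$ in size. So while your outline shows you have the right skeleton, the proof of the precise formulas for $A_1$ and $A_2$ is not actually given, and as written the stated mechanisms would not produce them.
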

\begin{proof}
    See Appendix \ref{charsumappen}. 
\end{proof}
\begin{remark}
Without loss of generality, we may assume $v_1\leq v_2$. For the other case, we can just reverse the role of $q_1$ and $q_2$. 
\end{remark}

We recall that
\begin{align}
    \Delta(q,h)=&\sup_{||\alpha||_2=1}\int\limits_{y_1\sim Y_0}\sum_{q_1\sim\frac{Q}{m_1}} \sum_{h_1\sim H'}\alpha(y_1,q_1,h_1)\int\limits_{y_2\sim Y_0}\sum_{q_2\sim\frac{Q}{m_1}} \sum_{h_2\sim H'}\overline \alpha(y_2, q_2,h_2) \cdot T_m\nn\\
    \text{where, }T_m& \asymp \frac{M_0}{m_1^2}\sum_{m\ll Q^2Y_0}\mathcal{C}\cdot W\left(\frac{Q^2(y_1-y_2)-m}{Q^2/M_0}\right)+ O_A(N^{-A}).\label{Deltaaftermpoisson}
\end{align}
Here we have $(d_q,h_1h_2)=1$, $(u_1, h_1)=1$ and $(u_2,h_2)=1$. We also have $(v_1v_2, h_1h_2)=1$ and $(m, u_1u_2)=1$.We further want $(u_2,h_1)=1$, which is not necessarily true. So we enforce that by extracting out the g.c.d. of $u_2$ and $h_1$ as shown below.
\begin{notation}\label{secondnotation}
    Let $d_{h}=(u_2,h_1)$. Then by abuse of notation, we write $h_1$ as $d_h h_1$ and $u_2$ as $d_h u_2$. The new $h_1$ and $u_2$ are co-prime to each other and it infers the following:
$$\sum_{h_1\sim H'}\sum_{u_2\sim \frac{Q}{m_1d_qv_2d_h}}\cdots(h_1, u_2)\rightsquigarrow \sum_{d_h\leq H'}\underset{(h_1,u_2)=1}{\sum_{h_1\sim \frac{H'}{d_h}}\sum_{u_2\sim \frac{Q}{m_1d_qv_2d_h}}}\cdots(d_hh_1, d_hu_2).$$
We also emphasize on the fact that with the new notation of $h_1$ and $u_2$, we have $(mh_1h_2, u_2)=1$. Let $\hat h_2=(h_2, (m h_1)^{\infty})$ and $\eta_2=\frac{h_2}{\hat h_2}$.

For brevity, by the notation $\smc$ we denote
\begin{align}
    \smc:=&\sum_{d_q\leq Q}\sum_{d_h\leq H'}\underset{(v_1,v_2)=1}{\sum_{v_1|d_q^{\infty}}\sum_{v_2|d_q^{\infty}}} \int\limits_{y_1\sim Y_0}\int\limits_{y_2\sim Y_0}\sum_{\tilde m} W\left(\frac{Q^2(y_1-y_2)-\tilde md_q}{Q^2/M_0}\right)\nonumber\\
    & \sumstar_{x_0\bmod d_0}~\sumstar_{x_1\bmod d_1}~\sumstar_{x_2\bmod d_2}~\sum_{h_1\sim \frac{H'}{d_h}}~\sum_{h_2\sim H'}.\label{sumshorthand}
\end{align}
\end{notation}

\section{Analysis of the non-zero frequencies ($m\neq 0$)}\label{Sectionq_1nonzero}

Following the notations defined in Notation \ref{firstnotation}, Notation \ref{secondnotation} particularly the notation $\smc$ (\ref{sumshorthand}), we can rewrite $\Delta(q,h)$ (\ref{Deltaaftermpoisson}) as
\begin{align*}
    \Delta(q,h)=&\sup_{||\alpha||_2=1}\frac{M_0}{m_1^2}\smc\sum_{u_1\sim \frac{Q}{m_1d_qv_1}}\alpha(y_1,d_qv_1u_1,d_hh_1)\\
    & \sum_{u_2\sim \frac{Q}{m_1d_qv_2d_h}}\overline \alpha(y_2,d_qv_2u_2d_h,h_2)e\left(\frac{\overline{u_1}u_2A_1}{mh_1}\right)e\left(\frac{\overline{d_hu_2}u_1A_2}{mh_2}\right).\\
\end{align*}

Now, our aim is to apply the Poisson summation formula on the $u_1$-sum. In order to do so, we get rid of $\alpha(y_1,q_1,h_1)$ by applying the Cauchy's inequality. Hence, we get 
\begin{align}\Delta(q,h)\ll \sup_{||\alpha||_2=1}\frac{M_0}{m_1^2}\cdot S_0^{1/2}\cdot S_1^{1/2},\label{DeltaOneCauchyMain}\end{align}
where 
\begin{align}
S_0&=\smc \hat h_2\sum_{u_1\sim \frac{Q}{m_1d_qv_1}}|\alpha(y_1,d_qv_1u_1,h_1)|^2, \label{S_0defn}
\end{align}
and 
\begin{align}
S_1=& \smc \frac{1}{\hat h_2} \sum_{u_1\sim \frac{Q}{m_1d_qv_1}}\Bigg|\sum_{u_2\sim \frac{Q}{m_1d_qv_2d_h}}\overline \alpha(y_2,d_qv_2u_2d_h,h_2)e\left(\frac{\overline{u_1}u_2A_1}{mh_1}\right)e\left(\frac{\overline{d_hu_2}u_1A_2}{mh_2}\right)\Bigg|^2.\label{S_1}
\end{align}
\begin{remark}
We note that we have multiplied $S_0$ by $\hat h_2$ and divided $S_1$ by $\hat h_2$. This normalization will help us at the later stage. For clarity one may assume $\hat h_2=1$ which is the generic situation. 
\end{remark}
Firstly we derive an upper bound of $S_0$ (\ref{S_0defn}) in the following lemma. 
\begin{lemma}
    Let $S_0$ be as defined in (\ref{S_0defn}). Then we have
    \begin{align}
        S_0\ll_{\varepsilon} \frac{Q^2Y_0H'N^{\varepsilon}}{M_0}.\label{S_0bound}
    \end{align}
\end{lemma}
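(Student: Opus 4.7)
The plan is to bound $S_0$ by observing that inside the big sum only the factor $|\alpha(y_1,d_qv_1u_1,d_hh_1)|^2$ depends on $\alpha$, while every other piece of $\smc\cdot\hat h_2$ is inert with respect to $\alpha$. Accordingly I would estimate the inert factors by crude pointwise bounds (absorbing the dependence on $d_q$ cleanly), and then close with Parseval using $\|\alpha\|_2^2=1$ after unwinding the factorizations $(d_q,v_1,u_1)\leftrightarrow q_1$ and $(d_h,h_1)\leftrightarrow h_1^{\mathrm{orig}}$ by divisor bounds.

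First I would control the joint $(y_2,\tilde m)$-contribution. Since we are in the non-zero frequency regime we have $\tilde m=m/d_q\neq 0$, and the support of $W$ enforces $|Q^2(y_1-y_2)-\tilde m d_q|\ll N^{\varepsilon}Q^2/M_0$. Fixing $y_1\sim Y_0$ and splitting into the two cases $Y_0\geq N^{\varepsilon}/M_0$ and $Y_0<N^{\varepsilon}/M_0$, the number of admissible $\tilde m\neq 0$ times the corresponding $y_2$-measure (intersected with $y_2\sim Y_0$) comes out in both cases to
\[
\int_{y_2\sim Y_0}\sum_{\tilde m\neq 0}|W|\ \ll\ \frac{Q^2Y_0 N^{\varepsilon}}{d_q M_0},
\]
which is exactly the dependence on $d_q$ needed below. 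Next I would trivially bound the remaining $\alpha$-independent factors: $\sum^{*}_{x_0}\sum^{*}_{x_1}\sum^{*}_{x_2}1\leq d_0d_1d_2=d_q$; the number of $v_2\mid d_q^{\infty}$ with $v_2\leq Q/(m_1d_q)$ is $\ll N^{\varepsilon}$ by the usual smooth-number count; and $\sum_{h_2\sim H'}\hat h_2\ll H'N^{\varepsilon}$, using that $\hat h_2\mid(mh_1)^{\infty}$ forces only $\ll N^{\varepsilon}$ admissible values of $\hat h_2\leq H'$, and for each such $\hat h_2$ the number of $h_2\sim H'$ with $(h_2,(mh_1)^{\infty})=\hat h_2$ is $\leq 2H'/\hat h_2$. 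Crucially the factor $d_q$ coming from the $x_i$-sums cancels exactly the $d_q^{-1}$ produced by the $(y_2,\tilde m)$-estimate.

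After these three steps I am left with
\[
S_0\ \ll\ \frac{Q^2Y_0H'N^{\varepsilon}}{M_0}\,\sum_{d_q,d_h,v_1,v_2,u_1,h_1}\int_{y_1\sim Y_0}|\alpha(y_1,d_qv_1u_1,d_hh_1)|^2\,dy_1.
\]
Executing the $v_2$-sum contributes only another harmless $N^{\varepsilon}$, and the remaining outer multi-sum becomes an overcounting sum over pairs $(q_1,h_1^{\mathrm{orig}})$ where each pair is produced by at most $\tau_3(q_1)\tau(h_1^{\mathrm{orig}})\ll N^{\varepsilon}$ factorizations $(d_q,v_1,u_1,d_h,h_1)$. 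The normalization $\|\alpha\|_2^2=1$ then yields the claimed bound. The only mildly delicate point is this overcounting bookkeeping for the $(d_q,v_1,u_1)$ and $(d_h,h_1)$ decompositions together with the divisor estimate for $\sum_{h_2}\hat h_2$; everything else is entirely elementary since no arithmetic cancellation is required.
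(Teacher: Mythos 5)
Your proof is correct and follows essentially the same route as the paper's: pointwise bounds on the inert factors, the combined $(y_2,\tilde m)$-estimate yielding $\frac{Q^2Y_0N^{\varepsilon}}{d_qM_0}$, the divisor bound $\sum_{h_2\sim H'}\hat h_2\ll H'N^{\varepsilon}$, and the $\|\alpha\|_2=1$ normalization with a divisor-overcounting argument to handle the $(d_q,v_1,u_1)$ and $(d_h,h_1)$ decompositions. One small simplification you could make: since we are in the regime $X\geq QCN^{\varepsilon}/N$, one has $Y_0M_0=\frac{XN}{QC}\geq N^{\varepsilon}$, so only the case $Y_0\geq N^{\varepsilon}/M_0$ actually occurs and the case split is unnecessary.
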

\begin{proof}
    If we recall the definition of $\smc$ from (\ref{sumshorthand}), we get 
    \begin{align*}
        S_0=&\sum_{d_q\leq Q}\sum_{d_h\leq H'}\underset{(v_1,v_2)=1}{\sum_{v_1|d_q^{\infty}}\sum_{v_2|d_q^{\infty}}} \int\limits_{y_1\sim Y_0}\sum_{u_1\sim \frac{Q}{m_1d_qv_1}}\sum_{h_1\sim \frac{H'}{d_h}}|\alpha(y_1,d_qv_1u_1,d_hh_1)|^2\nonumber\\&\times\sum_{h_2\sim H'}\sum_{\tilde m\ll \frac{Q^2Y_0}{d_q}}\hat h_2\sumstar_{x_0\bmod d_0}\sumstar_{x_1\bmod d_1}\sumstar_{x_2\bmod d_2} \int\limits_{y_2\sim Y_0}W\left(\frac{Q^2(y_1-y_2)-\tilde md_q}{Q^2/M_0}\right)\\
        \ll& \sum_{d_q\leq Q}\underset{(v_1,v_2)=1}{\sum_{v_1|d_q^{\infty}}\sum_{v_2|d_q^{\infty}}} \int\limits_{y_1\sim Y_0}\sum_{u_1\sim \frac{Q}{m_1d_qv_1}}\sum_{d_h\leq H'}\sum_{h_1\sim \frac{H'}{d_h}}|\alpha(y_1,d_qv_1u_1,d_hh_1)|^2\times\sum_{h_2\sim H'}\sum_{\tilde m\ll \frac{Q^2Y_0}{d_q}}\hat h_2d_q\cdot \frac{1}{M_0}\\
     \ll& \frac{Q^2Y_0H'N^{\varepsilon}}{M_0}.
    \end{align*}
    Here, we have used the fact that 
    \begin{align}
        \sum_{d_q\leq Q}\underset{(v_1,v_2)=1}{\sum_{v_1|d_q^{\infty}}\sum_{v_2|d_q^{\infty}}} \int\limits_{y_1\sim Y_0}\sum_{u_1\sim \frac{Q}{m_1d_qv_1}}\sum_{d_h\leq H'}\sum_{h_1\sim \frac{H'}{d_h}}|\alpha(y_1,d_qv_1u_1,d_hh_1)|^2\ll N^{\varepsilon},\label{alphanorm1}
    \end{align}
    and 
    $$\sum _{h_2\sim H'}\hat h_2\ll H'N^{\varepsilon}. $$
\end{proof}
We open up the absolute square in $S_1$ (\ref{S_1}) and get
\begin{align}
    S_1=&\sum\cdots \sum \frac{1}{\hat h_2} \sum_{u_2\sim \frac{Q}{m_1d_qv_2d_h}}\overline \alpha(y_2,d_qv_2u_2d_h,h_2)\sum_{u_3\sim \frac{Q}{m_1d_qv_2d_h}}\alpha(y_2,d_qv_2u_3d_h,h_2)\cdot S_{q}\label{S_1seconddef},\\
    \text{where}~~S_q=&\sum_{u_1\sim \frac{Q}{m_1d_qv_1}}e\left(\frac{\overline{u_1}(u_2-u_3)A_1}{mh_1}\right)e\left(\frac{(\overline{u_2}-\overline{u_3})\overline{d_h}A_2u_1}{mh_2}\right).\label{S_qdefn}
\end{align}

We will split the analysis of $S_1$ into three cases: 1. Before the application of the Poisson summation fomrula on $u_1$-sum, we treat the case $u_2=u_3$. Then we apply Poisson summation formula on the $u_1$ sum and treat the case 2. $q_1^*=0$ and 3. $q_1^*\neq 0$, where $q_1^*$ is the dual variable. The first two cases are relatively easier and will be handled in this section. We will treat the third case ($q_1^*\neq 0$) separately in Section \ref{Sectionq_1nonzero} and Section \ref{sectioninfcauchy}. 

\subsection{Diagonal term $u_2=u_3$}

We start with the case $u_2=u_3$, when there is no oscillatory factor and we have $S_q=\sum_{u_1\sim \frac{Q}{m_1d_qv_1}}1$. 
\begin{lemma}\label{u_2=u_3}
    Let $S_1$ be as defined in (\ref{S_1seconddef}) and let $u_2=u_3$. Then we have
    \begin{align}
        S_{1, u_2=u_3}\ll_{\varepsilon} \frac{Q^3Y_0H'N^\varepsilon}{M_0} .\label{S_1boundu_2=u_3}
    \end{align}
\end{lemma}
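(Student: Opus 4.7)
The plan is to bound $S_{1,u_2=u_3}$ along essentially the same lines as the proof of \eqref{S_0bound} for $S_0$, because in the diagonal case $u_2=u_3$ all of the oscillatory structure in $S_q$ collapses. Indeed, when $u_2=u_3$ the two exponential factors in the definition \eqref{S_qdefn} of $S_q$ both become $1$, so $S_q$ reduces to a trivial count $S_q=\sum_{u_1\sim Q/(m_1d_qv_1)}1\ll Q/(m_1d_qv_1)$. In parallel, the $\beta$-exponential in $\CC$ loses all residual dependence on $u_1,u_2$, so that the three complete sums $\sumstar_{x_0\bmod d_0}\sumstar_{x_1\bmod d_1}\sumstar_{x_2\bmod d_2}$ degenerate to their trivial count, at most $d_0d_1d_2=d_q$.

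Next I would mirror the $y$-integral/$\tilde m$-sum step used in the proof of \eqref{S_0bound}. For each fixed $y_2$ the cutoff $W\bigl(\tfrac{Q^2(y_1-y_2)-\tilde m d_q}{Q^2/M_0}\bigr)$ pins $y_1$ to an interval of length $1/M_0$ around $y_2+\tilde m d_q/Q^2$ for each admissible $\tilde m$, and the number of admissible values of $\tilde m$ is $\ll Q^2Y_0/d_q$. Hence $\sum_{\tilde m}\int_{y_1}W\ll Q^2Y_0/(d_qM_0)$, and combining this with the $\sumstar\leq d_q$ bound produces a net factor $Q^2Y_0/M_0$ that is independent of $d_q$.

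What remains is to re-package the $\alpha$-integral over $y_2$ with the surviving trivial sums. Since the re-indexing $(d_q,v_1,v_2,u_1,u_2,d_h,h_1)\longleftrightarrow(q_1,q_2,h_1)$ is a bijection with divisor-type multiplicity absorbed into $N^{\varepsilon}$, I would extract $\sum_{q_2,h_2,y_2}|\alpha(y_2,q_2,h_2)|^2\leq 1$ from $\|\alpha\|_2=1$, trivially bound the outer $\sum_{q_1,h_1}1\ll QH'/m_1$, and let the trivial $u_1$-count $Q/(m_1d_qv_1)$ be re-absorbed via $\sum_{d_q,v_1}1/(d_qv_1)\ll N^{\varepsilon}$. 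Using also $1/\hat h_2\leq 1$ and assembling all the pieces yields
\[
S_{1,u_2=u_3}\ll \frac{Q^2Y_0}{M_0}\cdot\frac{QH'N^{\varepsilon}}{m_1}\ll \frac{Q^3Y_0H'N^{\varepsilon}}{M_0},
\]
which is the claimed estimate \eqref{S_1boundu_2=u_3}. I do not expect any substantive obstacle here; the only real bookkeeping is the divisor analysis needed to verify that the change of variables from $(q_1,q_2,h_1)$ to the auxiliary ones costs only $N^{\varepsilon}$, and beyond that the argument is a direct adaptation of the $S_0$ estimate with one extra factor inherited from the trivial $u_1$-sum.
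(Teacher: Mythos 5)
Your argument is correct and follows essentially the paper's own route: in the diagonal case the two exponential factors in $S_q$ trivialize, the $u_1$-sum contributes at most $Q$ (more precisely $Q/(m_1 d_q v_1)$), and what remains is structurally identical to $S_0$, so the $\tilde m$-sum/$y$-integral, $\sumstar \leq d_q$, and $\|\alpha\|_2=1$ bookkeeping proceed exactly as in the proof of \eqref{S_0bound}. The paper compresses this to the one-line observation $S_1\ll S_0\cdot Q$ and cites the $S_0$ bound; your version spells out the same computation (with a small redundancy in how the $u_1$-count and the $\sum_{q_1,h_1}$ bound are presented, though both lead to the same final factor $QH'/m_1$), so it reaches the same conclusion by the same idea.
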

\begin{proof}
    If $u_2=u_3$, we note that $S_q=\sum_{u_1\sim \frac{Q}{m_1d_qv_1}}$ and 
    $$S_1=\smc  \sum_{u_2\sim \frac{Q}{m_1d_qv_2d_h}}|\overline \alpha(y_2,d_qv_2u_2d_h,h_2)|^2\sum_{u_1\sim \frac{Q}{m_1d_qv_1}}1.$$
    Comparing it to $S_0$ (\ref{S_0defn}), we derive 
    $$S_1\ll S_0\cdot Q\ll \frac{Q^3Y_0H'N^\varepsilon}{M_0}.$$
\end{proof}
\subsection{Poisson summation formula on $u_1$-sum}
Now, proceeding with the assumption $u_2\neq u_3$, we apply the Poisson summation formula (\ref{Poissonsum}) on $S_q$ (\ref{S_qdefn}) and denote the dual variable by $q_1^*$. Thus, we get
\begin{align*}
    S_q=&\frac{1}{mh_1h_2}\sum_{q_1^*\in \mathbb{Z}}\sum_{\beta \bmod mh_1h_2}e\left(\frac{\overline{\beta}(u_2-u_3)A_1}{mh_1}\right)e\left(\frac{(\overline{u_2}-\overline{u_3})\barr{d_h}A_2\beta}{mh_2}\right)\\&\times e\left(\frac{\beta q_1^*}{mh_1h_2}\right)\int\limits_{y\sim \frac{Q}{m_1d_qv_1}}e\left(\frac{-q_1^*y}{mh_1h_2}\right)dy.
\end{align*}
Once we change the variable $y\mapsto \frac{Qy}{m_1v_1d_q}$, by repeated integration by parts (Lemma \ref{repeatedint}), we observe that the integral is negligibly small unless 
$$q_1^*\ll \frac{m_1v_1d_qmh_1h_2N^{\varepsilon}}{Q}.$$
So we have 
\begin{align}
    S_q\asymp \frac{Q}{m_1d_qv_1mh_1h_2}\sum_{q_1^*\ll \frac{mh_1h_2m_1d_qv_1N^{\varepsilon}}{Q}}\mathcal{C}'+O(N^{-A}),\label{Sqbeforecharnormalization} 
\end{align}
where 
\begin{align}
    \CC'=\underset{(\beta, mh_1)=1}{\sum_{\beta \bmod mh_1h_2}}e\left(\frac{\overline{\beta}(u_2-u_3)A_1}{mh_1}\right)e\left(\frac{(\overline{u_2}-\overline{u_3})\barr{d_h}A_2\beta}{mh_2}\right)e\left(\frac{\beta q_1^*}{mh_1h_2}\right).\label{charsumstart}
\end{align}
We separate the analysis for $q_1^*=0$ and $q_1^*\neq 0$. 
We note that $m,h_1,h_2$ are not mutually co-prime. So we perform a maneuver similar to the $q_1, q_2$ scenario done at the beginning of Section \ref{nonzerofreq}. 
\begin{notation}\label{notsplitmh1h2}
    We recall that $m=d_q\cdot d_m\cdot \fm$ where $d_m=(\frac{m}{d_q}, d_q^{\infty})$ and $(\fm, d_qd_m)=1$. We denote $\fd=d_q\cdot d_m$. We already had the condition $(\fd, h_1h_2)=1$. 

Now let $\hat h_1=(h_1, (\fm h_2)^{\infty})$ and $\hat h_2=(h_2, (\fm h_1)^{\infty})$. Then let $\mu=\fm\hat h_1\hat h_2$, $\eta_1=\frac{h_1}{\hat h_1}$, $\eta_2=\frac{h_2}{\hat h_2}$. In that case, $\fd$, $\mu$, $\eta_1$ and $\eta_2$ are mutually co-prime. In the generic case, one may think $\fd=1$, $\mu= m$, $\eta_1=h_1$ and $\eta_2=h_2$. 

We also mention that only the notations of $\eta_2$ and $\hat h_2$ which are defined here, are to be used after Lemma \ref{lemmas_1q=0}. Rest of the notations are only for Lemma \ref{lemmas_1q=0}.
\end{notation}

\subsection{Zero-freuqency ($q_1^*=0$)}
\begin{lemma}\label{lemmas_1q=0}
For $q_1^*=0$, we get
    \begin{align}
        S_{1,q_1^*=0}\ll_{\varepsilon} & \frac{N^{\varepsilon}Q^3Y_0^{1/2}}{m_1^2H'M_0}.\label{S_1boundq=0}
    \end{align}
\end{lemma}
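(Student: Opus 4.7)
Setting $q_1^*=0$ in (\ref{charsumstart}) eliminates the third exponential, leaving a $\beta$-sum of two linear exponentials with moduli $mh_1$ and $mh_2$. The strategy is to invoke the Chinese Remainder Theorem relative to the pairwise coprime factorization $mh_1h_2=\fd\cdot\mu\cdot\eta_1\cdot\eta_2$ of Notation \ref{notsplitmh1h2} (as in \cite{pal2023secondmomentdegreelfunctions}) and evaluate the $\beta$-sum on each prime-power block. On the $\eta_1$-block only the first exponential contributes, since $\eta_1$ is coprime to $mh_2$, producing a Ramanujan sum $\fc_{\eta_1}\bigl((u_2-u_3)\cdot\mathrm{unit}\bigr)$; symmetrically on the $\eta_2$-block one obtains a Ramanujan sum in $u_2-u_3$ after the substitution $\overline{u_2}-\overline{u_3}\equiv -\overline{u_2u_3}(u_2-u_3)\pmod{\eta_2}$. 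On the $\fd\mu$-block both exponentials survive, and after the same substitution they combine into a single Kloosterman sum whose entries are both proportional to $(u_2-u_3)$, so Weil's bound (\ref{weilsbound}) yields $\ll \sqrt{\fd\mu}\,(u_2-u_3,\fd\mu)^{1/2}$.

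Substituting into (\ref{Sqbeforecharnormalization}) and expanding each Ramanujan factor via (\ref{ramanujanbound}), one obtains a bound for $S_q|_{q_1^*=0}$ as a sum of divisor-indexed terms carrying a weight $1/\sqrt m$ and enforcing congruences $u_2\equiv u_3$ modulo divisors of $\fd\mu\eta_1\eta_2=mh_1h_2$. Inserting into (\ref{S_1seconddef}), applying AM-GM $|\overline\alpha_2\alpha_3|\le (|\alpha_2|^2+|\alpha_3|^2)/2$, and bounding $\#\{u_2\sim U:u_2\equiv u_3\!\!\pmod D\}\le U/D+1$ with $U=Q/(m_1d_qv_2d_h)$ collapses the $u_2$-sum into a two-term expression whose dominant contribution comes from the large-divisor regime where the congruence effectively forces $u_2\equiv u_3$ modulo $h_1h_2$. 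The remaining sums over $d_q,d_h,v_1,v_2,x_0,x_1,x_2,h_1,h_2,\tilde m$ and the double integral in $(y_1,y_2)$ are estimated exactly as in the proof of (\ref{S_0bound}): the $W$-restriction localizes $(y_1,y_2)$ to a set of measure $\ll Y_0/M_0$ per $\tilde m$, the normalization $\|\alpha\|_2=1$ together with (\ref{alphanorm1}) absorbs the residual $\sum_{u_3,h_2,y_2}|\alpha_3|^2$, and $\sum_{h_2\sim H'}\hat h_2\ll H'N^\varepsilon$ compensates the $1/\hat h_2$ normalization introduced into $S_1$. The factor $Y_0^{1/2}$ in the final bound emerges from the interplay of the $1/\sqrt{\tilde m}$-weight summed against the $W$-localized $(y_1,y_2,\tilde m)$-measure.

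The main obstacle is the character-sum step on the $\fd\mu$-block: the CRT bookkeeping has to be done carefully (given that $\fd,\mu,\eta_1,\eta_2$ are pairwise coprime but $\mu$ need not divide $mh_1$ alone) to confirm that the fused exponential genuinely reduces to a single Kloosterman sum in $(u_2-u_3)$ with modulus $\fd\mu$, so that both the $\sqrt m$-saving and the additional $(u_2-u_3,\fd\mu)^{1/2}$-saving from Weil are correctly captured without incurring divisor overhead beyond $N^\varepsilon$. Once this reduction is in place, the remaining estimate is essentially the congruence-counting argument of Lemma \ref{u_2=u_3}, enhanced by the $\sqrt m$-saving from Weil; collecting all factors produces the claimed bound $S_{1,q_1^*=0}\ll_\varepsilon N^\varepsilon Q^3 Y_0^{1/2}/(m_1^2H'M_0)$.
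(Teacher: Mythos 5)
Your proposal follows essentially the same route as the paper: split the $\beta$-sum in $\CC'$ by CRT along the pairwise-coprime factorization $mh_1h_2=\fd\cdot\mu\cdot\eta_1\cdot\eta_2$ of Notation \ref{notsplitmh1h2}, bound the Kloosterman-type block by Weil and the $\eta$-blocks by Ramanujan-type estimates, then apply AM--GM and run the divisor/congruence counting against the $W$-localization to extract the $Y_0^{1/2}$ factor from the $1/\sqrt{m}$-weight. Two small discrepancies are worth flagging.

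First, the $\eta_2$-block is \emph{not} a Ramanujan sum: since the original constraint is $(\beta,mh_1)=1$ and $\eta_2$ is coprime to $mh_1$, the $\eta_2$-component of $\beta$ is unrestricted, so the paper obtains the complete exponential sum $\eta_2\cdot\delta\bigl(\eta_2\mid(\overline{u_2}-\overline{u_3})\cdots\bigr)$ rather than $\fc_{\eta_2}(\cdots)$. Numerically the two bounds are comparable ($\eta_2\cdot\delta(\eta_2|x)\le(\eta_2,x)$), so the final estimate survives, but the description is inaccurate.

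Second, you combine the $\fd$- and $\mu$-blocks into a single Kloosterman sum mod $\fd\mu$ and propose Weil there; the paper instead bounds the $\fd$-block trivially by $\fd$ and applies Weil only to the (normalized) $\mu$-block, then uses $\mu=\fm\hat h_1\hat h_2$ to peel off the factors $\hat h_1^{1/2}\hat h_2^{1/2}$. Your version is potentially sharper, but, as you yourself note, the fused-Kloosterman claim needs care: the residue $\beta_0$ is not a unit modulo $\hat h_2$ (nor $\eta_2$), so the $\fd\mu$-block is not literally $S(\cdot,\cdot;\fd\mu)$ in the usual sense, and one must either break it back into a genuine Kloosterman (mod $\fd\fm$), a Ramanujan sum (mod $\hat h_1$), and a complete sum (mod $\hat h_2$), or justify that Weil's bound still applies with gcd $(u_2-u_3,\fd\mu)$. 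Given the paper's trivial bound on $\fd$ already closes the lemma, this extra machinery is unnecessary, and the safest route is to bound $\fd$ trivially exactly as the paper does. With these two corrections your outline is sound and matches the paper's proof in substance.
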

\begin{proof}
With the notations defined in Notation \ref{notsplitmh1h2}, we split the character sum $\CC'$ as 
\begin{align}
     \CC'=&\sumstar_{\beta_d\bmod \fd}e\left(\frac{\overline{\beta_d}(u_2-u_3)A_1\barr{\eta_1\mu}\hat h_2}{\fd}\right)e\left(\frac{(\overline{u_2}-\overline{u_3})\barr{d_h}A_2\beta_0 \barr{\eta_2\mu }\hat h_1}{\fd}\right)\nonumber\\
     &\sumstar_{\beta _0\bmod \mu}e\left(\frac{\overline{\beta_0}(u_2-u_3)A_1\barr{\eta_1\fd}\hat h_2}{\mu}\right)e\left(\frac{(\overline{u_2}-\overline{u_3})\barr{d_h}A_2\beta_0 \barr{\eta_2\fd}\hat h_1}{\mu}\right)\nonumber\\
    &\times \sumstar_{\beta_1\bmod \eta_1}e\left(\frac{\overline{\beta_1}(u_2-u_3)A_1\barr{\mu\fd}\hat h_2}{\eta_1}\right)
    \sum_{\beta_2\bmod \eta_2}e\left(\frac{(\overline{u_2}-\overline{u_3})\barr{d_h}A_2\beta_2\barr{\mu\fd} \hat h_1}{\eta_2}\right).\nonumber\\
    =& S((u_2-u_3)A_1\barr{\eta_1\mu}\hat h_2,(\overline{u_2}-\overline{u_3})\barr{d_h}A_2\barr{\eta_2\mu}\hat h_1;\fd)\times S((u_2-u_3)A_1\barr{\eta_1\fd}\hat h_2,(\overline{u_2}-\overline{u_3})A_2\barr{\eta_2\fd}\hat h_1;\mu)\nonumber\\
    &\times \fc_{\eta_1}((u_2-u_3)A_1\barr{\mu\fd}\hat h_2)\times \eta_2\cdot \delta((\overline{u_2}-\overline{u_3})\barr{d_h}A_2\barr{\mu\fd} \hat h_1\equiv 0 \bmod \eta_2).\label{q0charsumfirst}
\end{align}
For the Kloosterman sum modulo $\fd$, we will use the trivial bound 
$$ S((u_2-u_3)A_1\barr{\eta_1\mu}\hat h_2,(\overline{u_2}-\overline{u_3})\barr{d_h}A_2\barr{\eta_2\mu}\hat h_1;\fd)\ll \fd.$$
Using (\ref{weilsbound}) and (\ref{ramanujanbound}) on (\ref{q0charsumfirst}), we get 
\begin{align}
    \CC'\ll &\fd\cdot \eta_2\cdot \mu^{1/2+\varepsilon}\cdot \sum_{d_\mu|\mu}d_\mu^{1/2}\cdot \delta(d_\mu|(u_2-u_3)A_1\barr{\eta_1}\hat h_2)\cdot \delta(d_\mu|(\overline{u_2}-\overline{u_3})\barr{d_h}A_2\barr{\eta_2}\hat h_1)\nonumber\\
    &\times \sum_{d_\eta|\eta_1}d_{\eta}\cdot \delta(d_\eta|(u_2-u_3)A_1\barr{\mu}\hat h_2)\cdot \delta(\eta_2|(\overline{u_2}-\overline{u_3})\barr{d_h}A_2\barr \mu \hat h_1). \label{q0charsumsecond}
\end{align}
Let us recall the definition of $A_1$ and $A_2$ from \eqref{defA1A2}. We note that $(A_1A_2, \mu \eta_1\eta_2)=1$. We  also recall from \ref{notsplitmh1h2} that $\mu=\fm\hat h_1\hat h_2, \eta_1, \eta_2$ are mutually co-prime and $(u_2u_3,\mu\eta_1\eta_2)=1$. With these information we write 
\begin{align}
    \CC'\ll &\fd\cdot \eta_2\cdot\mu^{1/2+\varepsilon}\cdot  \hat h_1^{1/2}\hat h_2^{1/2}\cdot \sum_{d_\mu|\fm}d_\mu^{1/2}\sum_{d_\eta|\eta_1}d_{\eta}\cdot\delta(d_\eta d_{\mu}\eta_2|(u_2-u_3)).
\end{align}
Then for a fixed $u_2$, we get 
\begin{align}
    \sum_{0<|u_3-u_2|\ll \frac{Q}{m_1 v_1d_qd_h}}\CC'
    \ll& N^{\varepsilon}\fd\cdot \sqrt{\fm}\hat h_1\hat h_2\cdot \frac{Q}{m_1v_1d_qd_h}.  
\end{align}
Here we have only considered the case $u_2\neq u_3$ as the case $u_2=u_3$ has already been treated in Lemma \ref{u_2=u_3}. Then after the application of the AM-GM inequality we can bound $S_1$ for $q_1=0$ by 
\begin{align*}
    S_{1,q=0}\ll_{\varepsilon} &\sum\cdots \sum \frac{1}{\hat h_2} \sum_{u_2\sim \frac{Q}{m_1d_qv_2d_h}}|\alpha(y_2,d_qv_2u_2d_h,h_2)|^2\cdot \frac{Q}{m_1d_qv_1mh_1h_2}\sum_{u_3\sim \frac{Q}{m_1d_qv_2d_h}}|\CC'|\\
    \ll_{\varepsilon}& N^{\varepsilon}\sum_{d_q\leq Q}\underset{(v_1,v_2)=1}{\sum_{v_1|d_q^{\infty}}\sum_{v_2|d_q^{\infty}}}\int\limits_{y_2\sim Y_0}\sum_{h_2\sim H'}\sum_{d_h\leq H'}\sum_{u_2\sim \frac{Q}{m_1d_qv_2d_h}} |\alpha(y_2,d_qv_2u_2d_h,h_2)|^2\sum_{\tilde m\ll \frac{Q^2Y_0}{d_q}} \\
    & \times  \int\limits_{y_1\sim Y_0}W\left(\frac{Q^2(y_1-y_2)-\tilde md_q}{Q^2/M_0}\right)~\sum_{h_1\sim \frac{H'}{d_h}}\frac{Q^2\sqrt{d_m}\hat h_1}{m_1^2v_1v_2\sqrt{d_q}\sqrt{m}d_hh_1h_2}\\
    \ll_{\varepsilon}&  N^{\varepsilon}\sum_{d_q\leq Q}d_q^{-3/2}\underset{(v_1,v_2)=1}{\sum_{v_1|d_q^{\infty}}\frac{1}{v_1}\sum_{v_2|d_q^{\infty}}}\frac{1}{v_2}\int\limits_{y_2\sim Y_0}\sum_{h_2\sim H'}\sum_{d_h\leq H'}\sum_{u_2\sim \frac{Q}{m_1d_qv_2d_h}}|\alpha(y_2,d_qv_2u_2d_h,h_2)|^2\cdot \frac{Q^2}{m_1^2}\cdot \frac{QY_0^{1/2}}{H'M_0}\\
    \ll& \frac{N^{\varepsilon}Q^3Y_0^{1/2}}{m_1^2H'M_0}.
\end{align*}
\end{proof}
\section{Cauchy's inequality ad infinitum}\label{sectioninfcauchy}
Now, we will deal with the case when $q_1^*\neq 0$ and $u_2\neq u_3$.  We recall that $h_2=\hat h_2\eta_2$, where $\hat h_2=(h_2, (\fm h_1)^{\infty})$. Thus, we have $(u_1, mh_1\hat h_2)=1$ and $(mh_1\hat h_2, \eta_2)=1$. So for $q_1^*\neq 0$, using (\ref{charsumstart}) we can simplify the character sum $\mathcal{C}'$ as
\begin{align}
    \CC'=&\underset{(\beta, mh_1)=1}{\sum_{\beta \bmod mh_1h_2}}e\left(\frac{\overline{\beta}(u_2-u_3)A_1}{mh_1}\right)e\left(\frac{(\overline{u_2}-\overline{u_3})\barr{d_h}A_2\beta}{mh_2}\right)e\left(\frac{\beta q_1^*}{mh_1h_2}\right)\nn\\
    =& S((u_2-u_3)A_1\hat h_2, (\overline{u_2}-\overline{u_3})\barr{d_h\eta_2}A_2h_1+q_1^*\barr{\eta_2}; mh_1\hat h_2)\nn\\& \cdot \eta_2\cdot \delta((\overline{u_2}-\overline{u_3})\barr{d_h}A_2h_1+q_1^*\equiv 0 \bmod \eta_2).\label{C'maindef}
\end{align}
Then we can rewrite $S_1$ (\ref{S_1seconddef}) as 
$$S_1=\sum\cdots \sum \frac{1}{\hat h_2} \sum_{u_2\sim \frac{Q}{m_1d_qv_2d_h}}\overline \alpha_2\sum_{u_3\sim \frac{Q}{m_1d_qv_2d_h}}\alpha_3\cdot  \frac{Q}{m_1d_qv_1mh_1h_2}\sum_{q_1^*\ll \frac{mh_1h_2m_1d_qv_1N^{\varepsilon}}{Q}}\mathcal{C}'. $$
\begin{remark}
At this point $S_1$ can be bounded by $(Q^2Y_0)^{3/2}\cdot \frac{1}{M_0}\cdot H'^{3/2}\cdot Q.$ Then by (\ref{DeltaOneCauchyMain}) and (\ref{S_0bound}), we derive
$$\Delta\ll_{\varepsilon} (Q^2Y_0)^{5/4}(H')^{5/4}Q^{1/2}\asymp \frac{Q^3C^{5/2}}{X^{5/2}N^{5/4}H^{5/4}}.$$
Here, we have used $Y_0=\frac{C^2}{QX^2N}$ and $H'=\frac{Q}{H}$. Then by (\ref{Sjustafterlargesieve}), we get 
$$S'_{q_1^*\neq 0}\ll_{\varepsilon} \sup_{Q\ll C}\sup_{\frac{QCN^{\varepsilon}}{N}\leq X\leq N^{\varepsilon}}\frac{HN^2X^3}{C^3Q}\cdot \frac{Q^3C^{5/2}}{X^{5/2}N^{5/4}H^{5/4}} \asymp \frac{N^{3/2}}{K^{3/4}H^{1/4}}.$$
Gathering up the bound of $S'$ from other cases (for which refer to (\ref{S'Finalcontribution}) and only consider the dominating terms), we get  
\begin{align}
    S'\ll_{\varepsilon} N^{\varepsilon}\Big(\frac{N^{3/2}}{K^{5/2}}+\frac{N^{5/4}}{K^{1/4}}+HN^{7/8}K^{3/8}+\frac{N^{3/2}}{K^{3/4}H^{1/4}}\Big)
\end{align}
Optimally choosing $K=\frac{N^{5/9}}{H^{10/9}}$, we can bound $S'$ by 
\begin{align}
    S'\ll_{\varepsilon} N^{\varepsilon}\Big(N^{2/18}H^{25/9}+N^{10/9}H^{5/18}+N^{13/12}H^{7/12}\Big).
\end{align}
This bound in non-trivial for $H\geq N^{1/5+\varepsilon}$ which is already an improvement over the result of \cite{dasgupta2024secondmomentgl3standard}.
\end{remark}
To obtain a finer estimate of $S_1$, we want to apply the process of Cauchy's inequality ad infinitum introduced in \cite{ALM}, which is an iterative process of applying Cauchy's inequality followed by the application of the Poisson summation formula. We will demonstrate the first two steps for clarity before going to the $j$th step for arbitrary $j>2$ and evaluate the upper bound of $S_1$ at that point. 

\subsection{First step}
Now we normalize the character sum $\CC'$ (\ref{C'maindef}) as

 \begin{align}\mathcal{C}_1:=\frac{1}{\eta_2\sqrt{mh_1\hat h_2}}\CC'=
& \CC_{1,m}\cdot \CC_{1,h2},
\end{align}
    where
    \begin{align}
        \CC_{1,m}=&\frac{1}{\sqrt{m h_1 \hat h_2}} S((u_2-u_3)A_1\hat h_2, (\overline{u_2}-\overline{u_3})\barr{d_h\eta_2}A_2h_1+q_1^*\barr{\eta_2}; mh_1\hat h_2),\label{C1m}\\
        \CC_{1,h2}=& \delta(\overline{u_2}-\overline{u_3}\equiv -\overline{h_1A_2}d_hq_1^*\bmod \eta_2).\label{C1h2}
    \end{align}
    Let us note that the normalization of $\CC_1$ would help us to get $\CC_1\ll 1$ on average. We will follow this normalization every time we apply the Poisson summation formula. Also note that $\CC_1$ is of modulus $mh_1\hat  h_2\eta_2=mh_1h_2$. 
Then we can rewrite (\ref{S_1seconddef}) as 
\begin{align*}
    S_1=\sum\cdots \sum \cdot \frac{Qh_2}{m_1d_qv_1mh_1h_2}\sum_{0\neq q_1^*\ll \frac{mh_1h_2m_1d_qv_1}{Q}} { \sum_{u_2\sim \frac{Q}{m_1d_qv_2d_h}}\overline \alpha_2\sum_{u_3\sim \frac{Q}{m_1d_qv_2d_h}}\alpha_3} \cdot \frac{\sqrt{mh_1}}{\hat h_2^{3/2}}\mathcal{C}_1(q_1^*, u_2,u_3) .
\end{align*}

Now, we will apply Cauchy's inequality on all but the $u_3$-sum to get
\begin{align*}
    S_1\ll S_{1,1}^{1/2}\cdot S_{2}^{1/2},
\end{align*}
where 
\begin{align}
    S_{1,1}\ll & \sum\cdots \sum \cdot \frac{Qh_2^2}{m_1d_qv_1mh_1h_2}\cdot mh_1\sum_{0\neq q_1^*\ll \frac{mh_1h_2m_1d_qv_1}{Q}} \sum_{u_2\sim \frac{Q}{m_1d_qv_2d_h}}|\alpha_2|^2\nn\\
    \ll&\frac{Q^2Y_0H'}{M_0}\cdot Q^2Y_0\cdot H'^3\asymp \frac{Q^4Y_0^2H'^4}{M_0},\label{S1,1bound}
\end{align}
and 
\begin{align*}
    S_2=\sum \cdots \sum  \cdot \frac{Q}{m_1d_qv_1{mh_1h_2}}\sum_{0\neq q_1^*\ll \frac{mh_1h_2m_1d_qv_1}{Q}} \cdot {\sum_{u_3\sim \frac{Q}{m_1d_qv_2d_h}}\alpha_3\sum_{u_4\sim \frac{Q}{m_1d_qv_2d_h}}\alpha_4}\cdot \frac{1}{\hat h_2^3} S_{2,q}, 
\end{align*}
where 
\begin{align*}
    S_{2,q}={\sum_{u_2\sim \frac{Q}{m_1d_qv_2d_h}}}\CC_1(q_1^*, u_2,u_3)\bar \CC_1(q_1^*, u_2,u_4).
\end{align*}
We recall that $(u_2, mh_1h_2)=1$. We note that $\CC_1(\cdots )\barr \CC_1(\cdots)$ is of modulus $mh_1h_2$. Once, we apply the Poisson summation formula on $S_{2,q}$ and evaluate the integral transform by repeated integration by parts, we get 
\begin{align}
    S_{2,q}=\frac{Q}{m_1d_qv_2md_hh_1h_2}\sum_{q_2^*\ll \frac{md_hh_1h_2m_1d_qv_2N^\varepsilon}{Q}} \sqrt{mh_1\hat h_2}\cdot \CC_2(q_1^*, q_2^*, q_3,q_4),
\end{align}
where 
\begin{align}
    \CC_2(q_1^*, q_2^*, u_3,u_4)=\frac{1}{\sqrt{mh_1\hat h_2}}~\sum_{\substack{\beta_2\bmod mh_1h_2\\(\beta_2,mh_1\hat h_2)=1}}\CC_1(q_1^*, \beta_2,u_3 )\bar \CC_1(q_1^*, \beta_2,u_4 )e_{m h_1\hat h_2\eta_2}(q_2^*\beta_2).
\end{align}
 We can split the character sum $\CC_2$ into $\CC_{2,m}$, and $\CC_{2,h2}$ as in (\ref{C1m}), and (\ref{C1h2}). We analyze them one by one. First we observe that 
\begin{align}
   \CC_{2,m}=&\frac{1}{\sqrt{m h_1\hat h_2}}\sumstar_{\beta_{2}\bmod mh_1\hat h_2 } \CC_{1,m}(q_1^*, \beta_2, u_3 )\bar \CC_{1,m}(q_1^*, \beta_2, u_4)e_{mh_1\hat h_2}(q_2^*\beta_2 \barr{\eta_2})\nonumber\\
   =&\frac{1}{(mh_1\hat h_2)^{3/2}}\sumstar_{\beta_{2}\bmod mh_1\hat h_2 }S((\beta_2-u_3)A_1\hat h_2, ((\overline{\beta_2}-\overline{u_3})\barr{ d_h\eta_2}A_2h_1+q_1^*\barr{\eta_2}); mh_1\hat h_2)\nonumber\\
   &\times S((\beta_2-u_4)A_1\hat h_2, ((\overline{\beta_2}-\overline{u_4})\barr{d_h \eta_2}A_2 h_1+q_1^*\barr{\eta_2}); mh_1\hat h_2)\times  e_{m h_1\hat h_2}(q_2^*\beta_2 \barr{\eta_2}).\label{C2m}
\end{align}
Note that this is exactly the character sum of Lemma B.1 of \cite{ALM}, but normalized so that the average value is 1. We also get 

\begin{align}
    \CC_{2,h2}=& \sumstar_{\kappa_2\bmod \eta_2}\delta(\overline{\kappa_2}-\overline{u_3}\equiv -\overline{h_1A_2}d_hq_1^*\bmod \eta_2)\delta(\overline{\kappa_2}-\overline{u_4}\equiv -\overline{h_1A_2}d_hq_1^*\bmod \eta_2)e_{\eta_2}(\kappa_2 q_2^*\barr{mh_1\hat h_2})\nn\\
    =& \delta(u_3\equiv u_4\bmod \eta_2)\times e_{\eta_2}(q_2^*\barr{mh_1\hat h_2}\cdot \barr{\bar u_3-\overline{h_1A_2}d_hq_1^*}).\label{C2h2}
\end{align}
We can ignore the last exponential term as it vanishes after the next application of Cauchy's inequality  where we only keep the sum over $u_4$ inside. If we denote $B=\frac{md_hh_1h_2m_1d_qv_2}{Q}$ (we note that $B\geq 1$), we get 
\begin{align}
    S_2=\sum \cdots \sum \cdot \frac{v_2d_h}{Bv_1}\sum_{0\neq q_1^*\ll \frac{BN^{\varepsilon}v_1}{d_hv_2}} \cdot {\sum_{u_3\sim \frac{Q}{m_1d_qv_2d_h}}\alpha_3\sum_{u_4\sim \frac{Q}{m_1d_qv_2d_h}}\alpha_4}\cdot \frac{1}{B}\sum_{q_2^*\ll BN^{\varepsilon}}\frac{\sqrt{mh_1}}{\hat h_2^{5/2}}\CC_2. 
\end{align}

\subsection{Second Step}
We recall that $S_1\ll S_{1,1}^{1,2}S_2^{1,2}$. Now we apply Cauchy's inequality in $S_2$ keeping all but the $u_4$-sum outside and get 
$$S_1\ll S_{1,1}^{1/2}S_{2,1}^{1/2^2}S_{3}^{1/2^2},$$
where 
\begin{align}
    S_{2,1}\ll & \sum \cdots \sum \cdot \sum_{u_3\sim \frac{Q}{m_1d_qv_2d_h}}|\alpha_3|^2\cdot \frac{v_2d_h}{Bv_1}\sum_{0\neq q_1^*\ll \frac{BN^{\varepsilon}v_1}{d_hv_2}} \frac{1}{B}\sum_{q_2^*\ll BN^{\varepsilon}}\cdot mh_1\nn\\
    \ll& \frac{Q^2Y_0H'}{M_0}\cdot Q^2Y_0 H'\asymp \frac{Q^4Y_0^2H'^2}{M_0},\label{S2,1finalbound}
\end{align}
and 
\begin{align}
    S_{3}=\sum\cdots \sum \frac{v_2d_h}{Bv_1}\sum_{0\neq q_1^*\ll \frac{BN^{\varepsilon}v_1}{d_hv_2}} \cdot \frac{1}{B}\sum_{q_2^*\ll BN^{\varepsilon}}\cdot  \sum_{u_3\sim \frac{Q}{m_1d_qv_2d_h}}\frac{1}{\hat h_2^5}\left|\sum_{u_4\sim \frac{Q}{m_1d_qv_2d_h}}\alpha_4\CC_2(\cdots, u_3, u_4)\right|^2.
\end{align}
In (\ref{S2,1finalbound}), we have used that 
\begin{align}
  \frac{Q}{m_1d_qv_2md_hh_1h_2}\sum_{q_2^*\ll \frac{md_hh_1h_2m_1d_qv_2N^\varepsilon}{Q}} =  \frac{1}{B}\sum_{q_2^*\ll BN^{\varepsilon}}\ll \left(N^{\varepsilon}+\frac{1}{B}\right)\ll N^{\varepsilon},\label{q_2*bound}
\end{align}
since $\frac{Bv_1}{d_hv_2}\gg N^{-\varepsilon}$ (otherwise there would be no non-zero $q_1^*$) and $v_1\leq v_2$, which further implies that $\frac{1}{B}\ll \frac{N^{\varepsilon}v_1}{d_hv_2}\ll N^{\varepsilon}$. 
\subsection{$j$th step}
Then after $j$-many similar iterated applications of Cauchy's inequality we get
$$S_1\ll \prod_{i=1}^{j-1}S_{i,1}^{1/2^i}\cdot S_{j}^{1/2^{j}},$$
where 
\begin{align}
S_{j-1,1}=    \sum \cdots \sum \cdot\frac{v_2d_h}{Bv_1}\sum_{0\neq q_1^*\ll \frac{BN^{\varepsilon}v_1}{d_hv_2}} \frac{1}{B}\sum_{q_2^*\ll BN^{\varepsilon}}\cdots \frac{1}{B}\sum_{q_{j-1}^*\ll BN^{\varepsilon}}\cdot \sum_{u_j\sim \frac{Q}{m_1d_qv_2d_h}}|\alpha_j|^2\cdot \sqrt{mh_1},\label{S_j,1bound}
\end{align}
and 
\begin{align}
    S_{j}=&\sum\cdots \sum \cdot \frac{v_2d_h}{Bv_1}\sum_{0\neq q_1^*\ll \frac{BN^{\varepsilon}v_1}{d_hv_2}} \cdot\cdots \frac{1}{B}\sum_{q_{j-1}^*\ll BN^{\varepsilon}} {\sum_{u_{j+1}\sim \frac{Q}{m_1d_qv_2d_h}}\alpha_{j+1}\sum_{u_{j+2}\sim \frac{Q}{m_1d_qv_2d_h}}\bar \alpha_{j+2}}\nn\\
    &\times \frac{1}{\hat h_2^{2^{j-1}+1}}\sum_{u_j\sim \frac{Q}{m_1d_qv_2d_h}}\cdot \cC_{j-1}(\cdots, u_j, u_{j+1})\bar \cC_{j-1}(\cdots, u_j,u_{j+2} ).\label{S_jstart}
\end{align}
Following the calculation in (\ref{S2,1finalbound}), we get 
\begin{align}
    S_{j,1}\ll \frac{Q^4Y_0^2H'^2}{M_0}, ~~~\text{for }j\geq 2.\label{S_j,1finalbound}
\end{align}
Once we apply the Poisson summation formula on the $u_j$-sum in $S_j$ (\ref{S_jstart}), and normalize the character sum, we get 
\begin{align}
    S_{j}=&\sum\cdots \sum \cdot \frac{v_2d_h}{Bv_1}\sum_{0\neq q_1^*\ll \frac{BN^{\varepsilon}v_1}{d_hv_2}}  \cdots \frac{1}{B}\sum_{q_{j-1}^*\ll BN^{\varepsilon}}{\sum_{u_{j+1}\sim \frac{Q}{m_1d_qv_2d_h}}\alpha_{j+1}\sum_{u_{j+2}\sim \frac{Q}{m_1d_qv_2d_h}}\alpha_{j+2}}\nonumber\\&\cdot \frac{\sqrt{mh_1}}{\hat h_2^{2^{j-1}+1/2}}\cdot \frac{1}{B}\sum_{q_j^*\ll BN^{\varepsilon}}\cdot \CC_j.\label{Sjafterpoisson}
   \end{align}
Here we have
\begin{align}
    \CC_j=&\frac{1}{\sqrt{mh_1\hat h_2}}\sumstar_{\beta_j\bmod mh_1h_2}\CC_{j-1}(\cdots , \beta_j, u_{j+1})\bar \CC_{j-1}(\cdots, \beta_j, u_{j+2})e_{mh_1h_2}(q_j^*\beta_j)
    = \CC_{j,m}\cdot  \CC_{j,h2},
\end{align}
where 
\begin{align}
    \CC_{j,m}=&\frac{1}{\sqrt{m h_1\hat h_2}}\sumstar_{\beta_j\bmod m h_1\hat h_2}\CC_{j-1,m}(\cdots u_{j+1})\bar \CC_{j-1,m}(\cdots u_{j+2})e_{mh_1\hat h_2}(q_j^*\beta_j\barr{\eta_2}),\label{Cjm}
\end{align}
and 
\begin{align}
    \CC_{j,h2}=\delta(u_{j+1}\equiv u_{j+2}\bmod \eta_2).\label{Cjh2}
\end{align}

The character sums of the form $\CC_{j,m}$ (\ref{Cjm}) (without the normalizing factor of $\frac{1}{\sqrt{mh_1\hat h_2}}$ at each step) are treated precisely in Lemma B.2. in \cite{ALM}. For convenience, we list the correspondence between their notation and of this article in Table \ref{tab:LemB.1not}. 
\begin{table}
    \centering
    \begin{tabular}{ccccccccc}
        \cite{ALM}&  $\gamma$ & $c$ & $a$ & $q_1$ & $q_2$ & $b_1$ & $b_2$ &  $b_3$\\
        Our notation &  $\beta_2$ & $mh_1\hat h_2$ & $q_j^*\barr{\eta_2}$ & $u_{j+1}$ &$u_{j+2}$ & $q_1^*\barr{\eta_2}$ & $A_2h_1\barr{d_h\eta_2}$ & $A_1\hat h_2$\\
&&&&
    \end{tabular}
    \caption{Notation of Lemma B.2, \cite{ALM}}
    \label{tab:LemB.1not}
\end{table}

Then by Lemma B.2 of \cite{ALM}, we can bound $\CC_{j,m}$ by 
$$\CC_{j,m}\ll N^{\varepsilon},$$
unless 
\begin{align}
    &u_{j+1}\equiv u_{j+2}\bmod mh_1\hat h_2,~~\label{firstcong}\\
   \text{or}\ \ \ \ & q_1^*\pm A_2h_1\barr{d_h}(\barr{u_{j+1}}-\barr{u_{j+2}})\equiv 0 \bmod mh_1\hat h_2,\label{secondcong}\\
    \text{or}\ \ \ \ & q_1^*\equiv A_2h_1\barr{d_h}\barr{u_{j+1}}\bmod mh_1\hat h_2,\label{thirdcong}\\
   \text{or}\ \ \ \  &q_1^*\equiv A_2h_1\barr{d_h}\barr{u_{j+2}}\bmod mh_1\hat h_2,\label{fourthcong}
\end{align}

and in those cases, we can bound $$\cC_{j,m}\ll N^{\varepsilon}(mh_1\hat h_2)^{1/2}.$$
\begin{lemma}\label{S_jlemmabound}
Let $S_j$ be as defined in (\ref{Sjafterpoisson}). Then we have
    $$S_j\ll_{\varepsilon} \frac{N^{\varepsilon}Q^4Y_0^{3/2}H'^{1/2}}{M_0}\left(1+\frac{1}{QY_0^{1/2}}+Y_0^{1/2}H'^2+Q^{1/2}Y_0^{1/2}\right).$$
\end{lemma}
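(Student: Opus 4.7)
The plan is to decompose the character sum $\CC_j = \CC_{j,m} \cdot \CC_{j,h2}$ from (\ref{Cjm})--(\ref{Cjh2}) and split the analysis according to Lemma B.2 of \cite{ALM}: either $\CC_{j,m} \ll N^{\varepsilon}$ (generic case) or one of the four congruence conditions (\ref{firstcong})--(\ref{fourthcong}) holds, in which case $\CC_{j,m} \ll N^{\varepsilon}(mh_1\hat h_2)^{1/2}$. The four sub-cases will produce the four terms $1,\ \frac{1}{QY_0^{1/2}},\ Y_0^{1/2}H'^2,\ Q^{1/2}Y_0^{1/2}$ appearing in the parenthesis of the claimed bound.

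The first step is to apply AM--GM $|\alpha_{j+1}\bar\alpha_{j+2}|\le |\alpha_{j+1}|^2 + |\alpha_{j+2}|^2$ and, by symmetry, to retain only the $\sum_{u_{j+1}}|\alpha_{j+1}|^2$ term. Then I exploit $\CC_{j,h2}=\delta(u_{j+1}\equiv u_{j+2}\bmod \eta_2)$ to cut the $u_{j+2}$-sum to $\approx \frac{Q}{m_1 d_q v_2 d_h\eta_2}+1$ classes, combine (\ref{q_2*bound}) on each $q_i^*$-sum for $i\ge 2$ with the analogous bound $\frac{v_2 d_h}{Bv_1}\sum_{q_1^*}1\ll N^{\varepsilon}$, integrate $y_2$ over the effective window of length $1/M_0$ provided by $W(\cdots)$, and use Lemma \ref{anotherramanujanbound} together with $\|\alpha\|_2=1$. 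This produces the leading factor $\frac{N^{\varepsilon}Q^4 Y_0^{3/2}H'^{1/2}}{M_0}$ in the generic case.

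For the three exceptional cases the extra factor $(mh_1\hat h_2)^{1/2}$ in $\CC_{j,m}$ is compensated by the saving forced by the imposed congruence: case (\ref{firstcong}) essentially identifies $u_{j+1}=u_{j+2}$ since $mh_1\hat h_2$ typically exceeds the length of the $u$-sum; case (\ref{secondcong}) pins $q_1^*$ modulo $mh_1\hat h_2$ and so collapses the $q_1^*$-sum to $O\bigl(1+\frac{Bv_1}{d_h v_2 mh_1\hat h_2}\bigr)$ terms; and cases (\ref{thirdcong})--(\ref{fourthcong}), which are interchanged by $u_{j+1}\leftrightarrow u_{j+2}$, each pin one of the $u$-variables in terms of $q_1^*$. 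In each subcase I carry out the same trivial count as in the generic case on the surviving variables, yielding the three corresponding terms.

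The main obstacle is the careful bookkeeping through Notations \ref{firstnotation} and \ref{secondnotation}: the multiplicative decompositions $q_i=d_q v_i u_i$, $m = d_q d_m \fm$, $h_2=\hat h_2\eta_2$, combined with the $(u_2,h_1)$-splitting via $d_h$, must be interleaved with CRT before each exceptional congruence can be turned into explicit restrictions on the primitive variables; one must also verify that the secondary congruences $(u_1, h_1)=1$, $(u_2, h_2)=1$, $(m,u_1u_2)=1$ are preserved under each sub-case. A secondary technical point is the rapidly decaying factor $\hat h_2^{-2^{j-1}-1/2}$: coupled with $\sum_{h_2}\hat h_2\ll N^{\varepsilon}H'$, it makes the analysis effectively insensitive to $j$ for $j\ge 2$, so one may proceed as if $\hat h_2=1$ throughout the detailed counting.
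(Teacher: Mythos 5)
Your overall framework is the same as the paper's: apply Lemma~B.2 of \cite{ALM} to split into a generic case with $\CC_{j,m}\ll N^{\varepsilon}$ and the four exceptional congruences (\ref{firstcong})--(\ref{fourthcong}) with $\CC_{j,m}\ll N^{\varepsilon}(mh_1\hat h_2)^{1/2}$, exploit $\CC_{j,h2}=\delta(u_{j+1}\equiv u_{j+2}\bmod \eta_2)$, use the $q_i^*$-normalization (\ref{q_2*bound}), integrate $y_1$ over a window of length $M_0^{-1}$, and close with $\|\alpha\|_2=1$. This part is right, and your remark about $\hat h_2$ being effectively $1$ matches the paper's treatment.

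However there is a real gap in your handling of the exceptional cases (\ref{secondcong})--(\ref{fourthcong}), and a smaller bookkeeping error elsewhere. On the bookkeeping side, the four terms in the parenthesis do not come one per sub-case as you assert: the generic case produces the term $1$, case (\ref{firstcong}) alone produces both $\frac{1}{QY_0^{1/2}}$ and $Y_0^{1/2}H'^2$, and cases (\ref{secondcong})--(\ref{fourthcong}) together produce $1+Q^{1/2}Y_0^{1/2}$. More seriously, your plan to ``carry out the same trivial count as in the generic case on the surviving variables'' after AM--GM cannot reach the term $Q^{1/2}Y_0^{1/2}$ in cases (\ref{secondcong})--(\ref{fourthcong}). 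AM--GM forces you to keep \emph{both} $\sum_{u_{j+1}}|\alpha_{j+1}|^2$ and $\sum_{u_{j+2}}|\alpha_{j+2}|^2$, but the congruence, say (\ref{fourthcong}) which pins $u_{j+2}$ in terms of $q_1^*$ modulo $m\hat h_2$, only produces a saving in one of the two directions; in the direction where you keep $|\alpha_{j+2}|^2$ and then sum freely over $u_{j+1}$, the unused constraint leaves you with the extra factor $(mh_1\hat h_2)^{1/2}\approx QY_0^{1/2}H'^{1/2}$ (even after exploiting $h_1\mid q_1^*$), which is strictly larger than $Q^{1/2}Y_0^{1/2}$ by roughly $\sqrt{QH'}\gg 1$. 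The paper resolves this by an \emph{additional Cauchy--Schwarz} in these cases, writing $S_j\ll S_{j,j+1}^{1/2}S_{j,j+2}^{1/2}$ with $S_{j,j+1}$ carrying $|\alpha_{j+1}|^2$ and $S_{j,j+2}$ carrying $|\alpha_{j+2}|^2$, then bounding the unfavourable factor trivially and harvesting the full saving (the $h_1\mid q_1^*$ divisibility, which you also omit, and the pinning of $u_{j+2}$ modulo $m\hat h_2\eta_2$) in the favourable factor; the geometric mean of the two is exactly what yields $1+Q^{1/2}Y_0^{1/2}$. Without this bilinear step the exceptional cases do not close.
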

\begin{proof}
    Following the argument leading to (\ref{q_2*bound}), we get 
    $\frac{1}{B}\sum_{q_k^*\ll BN^{\varepsilon}}\ll N^{\epsilon}$ for $2\leq k \leq j$ and $\frac{v_2d_h}{Bv_1}\sum_{0\neq q_1^*\ll \frac{BN^{\varepsilon}v_1}{v_2d_h}}\ll N^{\varepsilon}$. 
    
    When none of the congruence relations (\ref{firstcong}), (\ref{secondcong}), (\ref{thirdcong}), (\ref{fourthcong}) holds, we have $\cC_j\ll N^{\varepsilon}$ along with the congruence relation in (\ref{Cjh2}). 
    In that case, following the definition of $\sum\cdots\sum$ (\ref{sumshorthand}) in $S_j$ (\ref{S_jstart}), and applying AM-GM inequality ($|\alpha_{j+1}\alpha_{j+2}|\ll |\alpha_{j+1}|^2+|\alpha_{j+2}|^2 $), we have  
    \begin{align}
        S_j\ll & \sum_{d_q\leq Q}\sum_{d_h\leq H'}\underset{(v_1,v_2)=1}{\sum_{v_1|d_q^{\infty}}\sum_{v_2|d_q^{\infty}}} \int\limits_{y_2\sim Y_0}\sum_{h_2\sim H'}\sum_{u_{j+2}\sim \frac{Q}{m_1d_qv_2d_h}}|\alpha_{j+2}|^2\sumstar_{x_0\bmod d_0}~\sumstar_{x_1\bmod d_1}~\sumstar_{x_2\bmod d_2}\nonumber\\
        &
    ~\sum_{h_1\sim \frac{H'}{d_h}}~\sum_{\tilde m\ll \frac{Q^2Y_0}{d_q}}\frac{v_2d_h}{Bv_1}\sum_{0\neq q_1^*\ll \frac{BN^{\varepsilon}v_1}{d_hv_2}}  \cdots \frac{1}{B}\sum_{q_{j-1}^*\ll BN^{\varepsilon}}{\sum_{\substack{u_{j+1}\sim \frac{Q}{m_1d_qv_2d_h}\\u_{j+1}\equiv u_{j+2}\bmod \eta_2}}}\cdot \frac{\sqrt{mh_1}}{\hat h_2^{2^{j-1}+1/2}}\int\limits_{\substack{y_1\sim Y_0\\|y_1-y_2|\ll M_0^{-1}}} \nn\\
    \ll& \sum_{d_q\leq Q}\sum_{d_h\leq H'}\underset{(v_1,v_2)=1}{\sum_{v_1|d_q^{\infty}}\sum_{v_2|d_q^{\infty}}} \int\limits_{y_2\sim Y_0}\sum_{h_2\sim H'}\sum_{u_{j+2}\sim \frac{Q}{m_1d_qv_2d_h}}|\alpha_{j+2}|^2 d_q\cdot \frac{H'^{3/2}}{d_h^{3/2}}\cdot \frac{Q^3Y_0^{3/2}}{d_q}\cdot \left(\frac{Q}{m_1d_qv_2d_hH'}+1\right)\frac{1}{M_0}\nn\\
    \ll& \frac{N^{\varepsilon}Q^3Y_0^{3/2}H'^{1/2}}{M_0}\left(Q+H'\right)
    \ll\frac{N^{\varepsilon}Q^4Y_0^{3/2}H'^{1/2}}{M_0}.\label{S_jcase0}
    \end{align}
   At the last line, we have used $H'\ll \frac{Q}{H}\ll Q.$
   
Now, we focus on the four exceptional cases, mentioned above. We start with (\ref{firstcong}) $u_{j+1}\equiv u_{j+2}\bmod mh_1\hat h_2$. We also have $u_{j+1}\equiv u_{j+2}\bmod \eta_2$ from $\cC_{j.h_2}$. Thus, we basically have 
$$u_{j+1}\equiv u_{j+2}\bmod mh_1h_2.$$
Then by AM-GM inequality, we have 
\begin{align*}
    &\sum_{u_{j+1}\sim \frac{Q}{m_1d_qv_2d_h}}\alpha_{j+1}\sum_{u_{j+2}\sim \frac{Q}{m_1d_qv_2d_h}} \alpha_{j+2}|\cC_j|
    \ll  \sum_{u_{j+1}\sim \frac{Q}{m_1d_qv_2d_h}}|\alpha_{j+1}|^2\left(\frac{Q}{m_1d_qv_2d_h\sqrt{mh_1h_2}}+\sqrt{mh_1\hat h_2}\right).
\end{align*}
Following the exact calculation of the first case (\ref{S_jcase0}), we get that 
\begin{align*}
    S_j\ll &\sum_{d_q\leq Q}\sum_{d_h\leq H'}\underset{(v_1,v_2)=1}{\sum_{v_1|d_q^{\infty}}\sum_{v_2|d_q^{\infty}}} \int\limits_{y_2\sim Y_0}\sum_{h_2\sim H'}\sum_{u_{j+2}\sim \frac{Q}{m_1d_qv_2d_h}}|\alpha_{j+2}|^2 d_q\cdot \frac{H'^{3/2}}{d_h^{3/2}}\cdot \frac{Q^3Y_0^{3/2}}{d_q}\\
    &\cdot \left(\frac{Q}{m_1d_qv_2d_h(QY_0^{1/2}H')}+(QY_0^{1/2}H')\right)\frac{1}{M_0}\\
    \ll & \frac{N^{\varepsilon}Q^4Y_0^{3/2}H'^{1/2}}{M_0}\left(\frac{1}{QY_0^{1/2}}+Y_0^{1/2}H'^2\right).
\end{align*}
In the case of (\ref{secondcong}), (\ref{thirdcong}) and (\ref{fourthcong}), we first derive that $h_1|q_1^*$. We also recall that $q_1^*\neq 0$. Now, we apply Cauchy's inequality on all the sums in $S_j$ to get $S_j\ll \sqrt{ S_{j,j+1}}\cdot \sqrt{ S_{j,j+2}}$, where 
\begin{align*}
    S_{j,j+1}:=&\sum\cdots \sum \cdot \sum_{u_{j+1}\sim \frac{Q}{m_1d_qv_2d_h}}|\alpha_{j+1}|^2 ~\frac{v_2d_h}{Bv_1}\sum_{0\neq q_1^*\ll \frac{BN^{\varepsilon}v_1}{d_hv_2}}  \cdots \frac{1}{B}\sum_{q_{j-1}^*\ll BN^{\varepsilon}}\\&\times \sum_{u_{j+2}\sim \frac{Q}{m_1d_qv_2d_h}}\cdot \frac{\sqrt{mh_1}}{\hat h_2^{2^{j-1}+1/2}}\cdot \frac{1}{B}\sum_{q_j^*\ll BN^{\varepsilon}}\cdot \CC_j,
\end{align*}
and $S_{j,j+2}$ is exactly the same with the role of $u_{j+1}$ and $u_{j+2}$ reversed. We will only do the last case (\ref{fourthcong}) as (\ref{secondcong}) and (\ref{thirdcong}) is similar. When (\ref{fourthcong}) holds, we can evaluate $S_{j,j+2}$ exactly similar to (\ref{S_jcase0}) and get 
$$S_{j,j+2}\ll \frac{N^{\varepsilon}Q^4Y_0^{3/2}H'^{1/2}}{M_0}(QY_0^{1/2}H'^{1/2}). $$
The extra term $(QY_0^{1/2}H'^{1/2})$ is coming from $\cC_{j,m}\ll \sqrt{mh_1}$. In $S_{j,j+1}$, we have 
\begin{align*}\frac{v_2d_h}{Bv_1}\sum_{0\neq q_1^*\ll\frac{Bv_1}{d_hv_2}}\sum_{u_{j+2}\sim \frac{Q}{m_1d_qv_2d_h}}|\CC_j|
\ll& \frac{v_2d_h}{Bv_1}\sum_{\substack{0\neq q_1^*\ll\frac{Bv_1}{d_hv_2}\\h_1|q_1^*}}\sum_{\substack{u_{j+2}\sim \frac{Q}{m_1d_qv_2d_h}\\m\hat h_2|u_{j+2}\\u_{j+2}\equiv u_{j+1}\bmod \eta_2}}\sqrt{mh_1}\\
\ll& \left(\frac{Q}{m_1d_qv_2d_hmh_2}+1\right)\cdot \frac{\sqrt{m}}{\sqrt{h_1}}.
\end{align*}
Then following the calculation in (\ref{S_jcase0}), we get 
\begin{align}
    S_{j,j+1}\ll &\sum_{d_q\leq Q}\sum_{d_h\leq H'}\underset{(v_1,v_2)=1}{\sum_{v_1|d_q^{\infty}}\sum_{v_2|d_q^{\infty}}} \int\limits_{y_2\sim Y_0}\sum_{h_2\sim H'}\sum_{u_{j+1}\sim \frac{Q}{m_1d_qv_2d_h}}|\alpha_{j+2}|^2 d_q\cdot \frac{H'^{3/2}}{d_h^{3/2}}\cdot \frac{Q^3Y_0^{3/2}}{d_qM_0}\nn\\
    &\cdot \left(\frac{Q}{m_1d_qv_2d_h(Q^2Y_0H')}+1\right)\cdot \frac{QY_0^{1/2}}{H'^{1/2}}\nn\\
    \ll&  \frac{N^{\varepsilon}Q^4Y_0^{3/2}H'^{1/2}}{M_0}\cdot \left(\frac{Q}{(Q^2Y_0H')}+1\right)\cdot \frac{Y_0^{1/2}}{H'^{1/2}}.\nn
\end{align}
Then we get 
\begin{align*}
    S_j\ll& S_{j,j+2}^{1/2}\cdot S_{j,j+1}^{1/2}
    \ll \frac{N^{\varepsilon}Q^4Y_0^{3/2}H'^{1/2}}{M_0}\cdot \left(1+Q^{1/2}Y_0^{1/2}\right). \qedhere
\end{align*}
\end{proof}
\subsection{Final calculations for main theorem}
Let us recall that 
$Y_0=C^2/QX^2N$. Hence, 
\begin{align*}
    &\left(1+\frac{1}{QY_0^{1/2}}+Y_0^{1/2}H'^2+Q^{1/2}Y_0^{1/2}\right)
    \ll \frac{N^{\varepsilon}N^{1/4}}{XQ^{1/2}K^{1/4}},
\end{align*}
provided $K\ll N^{1/3}$ and $H\gg \frac{N^{3/8}}{K^{5/8}}.$
So, from Lemma \ref{S_jlemmabound}, we have 
\begin{align}
    S_j\ll_{\varepsilon} \frac{Q^4Y_0^{3/2}H'^{1/2}}{M_0}\cdot \frac{N^{\varepsilon}N^{1/4}}{XQ^{1/2}K^{1/4}}\label{S-jfinall}
\end{align}
provided $K\ll N^{1/3}$ and $H\gg \frac{N^{3/8}}{K^{5/8}}.$
We already had $S_{1,1}\ll_{\varepsilon} \frac{Q^4Y_0^2H'^4}{M_0}$ (see \ref{S1,1bound}) and $S_{i,1}\ll \frac{Q^4Y_0^2H'^2}{M_0}$ (see \ref{S_j,1finalbound}) for all $ i\geq 2$. 
Hence, we have 
$$S_1\ll_{\varepsilon} H' \cdot \left(\frac{Q^4Y_0^2H'^2}{M_0}\right)^{\frac{2^{j-1}-1}{2^{j-1}}}\cdot \left(\frac{Q^4Y_0^{3/2}H'^{1/2}}{M_0}\cdot \frac{N^{\varepsilon}N^{1/4}}{XQ^{1/2}K^{1/4}}\right)^{2^{-j}},$$
for $q_1^*\neq 0$ and $u_2\neq u_3$. From Lemma \ref{u_2=u_3} and Lemma \ref{lemmas_1q=0} we get the bound on $S_1$ for the remaining cases and we put it into $\Delta$ (\ref{DeltaOneCauchyMain}). Then in the off diagonal case ($m\neq 0$), we get  
\begin{align}
    \Delta\ll_{\varepsilon} & \frac{M_0}{m_1^2}\cdot \frac{QY_0^{1/2}H'^{1/2}}{M_0^{1/2}}\cdot \left(\frac{Q^{3/2}Y_0^{1/2}H'^{1/2}}{M_0^{1/2}}+\frac{Y_0^{1/4}Q}{m_1^{1/2}M_0^{1/2}}\left(\frac{Q^{1/2}}{m_1^{1/2}H'^{1/2}}+H'^{1/2}\right)+\frac{Y_0Q^{2}H'^{3/2}}{M_0^{1/2}}\right)\nn\\
    \ll_{\varepsilon}& \left(\frac{Q^{5/2}Y_0H'}{m_1^2}+\frac{Q^{5/2}Y_0^{3/4}}{m_1^{5/2}}+\frac{Q^2Y_0^{3/4}H'}{m_1^2}+\frac{Y_0^{3/2}Q^{3}H'^{2}}{m_1^2}\right).
\end{align}
Finally, we calculate $S'$ (\ref{S'defn}) in the off-diagonal case. Once, we use the Ramanujan bound on average in the $m_1$ and $d$-sum,  we get 
\begin{align*}
    S'_{m\neq 0}\ll_{\varepsilon} &\frac{HN^{1+\varepsilon}}{C}\sup_{Q\ll C} \frac{1}{Q^2}\sup_{H'\ll \frac{QN^{\varepsilon}}{H}}\sum_{\substack{\frac{QCN^{\varepsilon}}{N}\leq X\leq C^{\varepsilon}\\\text{dyadic }} }\cdot \frac{X}{Y_0}\left(Q^{5/2}H'Y_0+Q^{5/2}Y_0^{3/4}+Q^2Y_0^{3/4}H'+Y_0^{3/2}Q^{3}H'^{2}\right)\\
    \ll_{\varepsilon}& \frac{N^{5/4+\varepsilon}}{K^{1/4}}+HN^{7/8+\varepsilon}K^{3/8}+N^{9/8+\varepsilon}K^{1/8}+\frac{N^{7/4+\varepsilon}}{K^{5/4}H}.
\end{align*}
For small values of $X$ (\ref{SXsmall}) and in the diagonal case (\ref{Sdiagonal}), we can bound $S'$ (\ref{S'defn}) by
$$\frac{N^{3/2+\varepsilon}}{K^{5/2}}+HN^{1/2+\varepsilon}K^{3/2}.$$
Hence, in total, we can bound $S'$ (\ref{S'defn}) by
\begin{align}
     \frac{N^{3/2+\varepsilon}}{K^{5/2}}+HN^{1/2+\varepsilon}K^{3/2}+\frac{N^{5/4+\varepsilon}}{K^{1/4}}+HN^{7/8+\varepsilon}K^{3/8}+N^{9/8+\varepsilon}K^{1/8}+\frac{N^{7/4+\varepsilon}}{K^{5/4}H}.\label{S'Finalcontribution}
\end{align}
As $K\leq N^{1/3}$ (otherwise $HN^{1/2}K^{3/2}>NH$), we observe that 
$$HN^{7/8}K^{3/8}\geq HN^{1/2}K^{3/2}\text{ and }\frac{N^{5/4}}{K^{1/4}}\geq N^{9/8}K^{3/8}.$$
Then optimally choosing $K=\frac{\sqrt{N}}{H}$, we observe that 
\begin{align}
    S'\ll_{\varepsilon} N^{\varepsilon}\left(N^{1/4}H^{5/2}+H^{5/8}N^{17/16}+N^{9/8}H^{1/4}\right),
\end{align}
which concludes the proof of Theorem \ref{maintheorem}.

\appendix
\section{Linearization}\label{appen.lin}
Let $V(x)$ be a smooth function supported in $[2,4]$ and let $\CX=(A+B)$ for some fixed real term $A$ and $B$ and let $a,b\in \bR$ and $a< 0$. 
\begin{lemma}\label{MellinBarnesLemma}
Let $s=c_1+it_1$ and $z=c_2+it_2$ for some $c_1,c_2>1$ 
    \begin{align}
        \CX^{b}\cdot V(\CX^aZ)=\frac{-1}{4\pi^2}\int\limits_{|t_1|\ll N^{\varepsilon}}\int\limits_{|t_2|\ll N^{\varepsilon}} \hat V(s)Z^{-s} \frac{\Gamma(as-b+z)\Gamma(-z)}{\Gamma(as-b)} A^{z}B^{-as+b-z}+O_A(N^{-A}).
    \end{align}
\end{lemma}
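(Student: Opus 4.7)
The plan is to prove the identity by two successive Mellin inversions: one to separate $V$ from its composite argument $\CX^a Z$, and another to separate $A$ from $B$ inside the resulting power of $\CX=A+B$.

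Since $V$ is smooth and compactly supported on $[2,4]$, its Mellin transform $\hat V(s)=\int_0^\infty V(x) x^{s-1}dx$ is entire, and Mellin inversion at $y=\CX^a Z$ followed by multiplying by $\CX^b$ gives
$$\CX^b V(\CX^a Z)=\frac{1}{2\pi i}\int_{(c_1)} \hat V(s)\, Z^{-s}\, \CX^{b-as}\, ds.$$
To unfold the remaining power of $\CX$, I would apply the standard beta-function Mellin--Barnes identity
$$(1+u)^{-\beta} = \frac{1}{2\pi i}\int_{(c_2)} \frac{\Gamma(\beta+z)\Gamma(-z)}{\Gamma(\beta)}\, u^z\, dz,$$
with $\beta=as-b$ and $u=A/B$, combined with $\CX^{b-as}=B^{b-as}(1+A/B)^{b-as}$, to obtain
$$\CX^{b-as} = \frac{1}{2\pi i}\int_{(c_2)} \frac{\Gamma(as-b+z)\Gamma(-z)}{\Gamma(as-b)}\, A^z\, B^{b-as-z}\, dz.$$
Substituting back, interchanging the two absolutely convergent contours, and writing $ds=i\,dt_1,\ dz=i\,dt_2$ so that $(2\pi i)^{-2}\,ds\,dz$ absorbs the factors of $i$, one recovers the double integral in the statement with prefactor $\pm 1/(4\pi^2)$.

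It remains to truncate the contours to $|t_1|,|t_2|\ll N^{\varepsilon}$ with an error of $O(N^{-A})$. Repeated integration by parts in $\hat V(s)$, combined with the derivative bound $V^{(j)}(x)\ll N^{j\varepsilon}$ and the compact support of $V$, yields $\hat V(c_1+it_1)\ll_j N^{j\varepsilon}(1+|t_1|)^{-j}$, which makes the $t_1$-tail negligible on choosing $j$ large. For the $t_2$-tail, Stirling's asymptotics give exponential decay $|\Gamma(-z)|\asymp |t_2|^{-c_2-1/2} e^{-\pi|t_2|/2}$, while $|\Gamma(as-b+z)/\Gamma(as-b)|$ grows only polynomially in $|t_1|+|t_2|$, so the exponential factor in $\Gamma(-z)$ kills this tail as well. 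The main technical point to watch is the admissible choice of the real parts $c_1,c_2$: they must be placed so that both the Mellin inversion of $V$ and the binomial Mellin--Barnes identity hold simultaneously, i.e.\ so that both contours avoid all poles of the Gamma factors and lie in a common strip where the two Mellin representations converge absolutely; once such $c_1,c_2$ are fixed, Fubini's theorem legitimises the interchange and the decay estimates above deliver the truncation.
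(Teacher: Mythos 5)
Your argument matches the paper's proof step for step: Mellin inversion in $V$, the Mellin--Barnes (binomial) identity applied to $\CX^{b-as}=(A+B)^{b-as}$ with $\lambda=as-b$, then Fubini, repeated integration by parts for the $t_1$-truncation, and Stirling for the $t_2$-truncation. One small inaccuracy worth flagging: once $|t_1|\ll N^{\varepsilon}$, the ratio $|\Gamma(as-b+z)/\Gamma(as-b)|$ in fact decays exponentially in $|t_2|$ (since $|\Gamma(\sigma+it)|\asymp|t|^{\sigma-1/2}e^{-\pi|t|/2}$ and the denominator has bounded imaginary part) rather than growing polynomially as you assert, which only strengthens the decay you already extract from $\Gamma(-z)$, so the conclusion is unaffected.
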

\begin{proof}
We will first apply Mellin inversion to get 
\begin{align}
  \CX^{b}\cdot V(\CX^aZ)=\CX^b\frac{1}{2\pi i}\int_{(c_1)}\hat V(s) \left(\CX^aZ\right)^{-s}ds=\frac{1}{2\pi i}\int_{(c_1)}\hat V(s)Z^{-s}\CX^{b-as}ds.\label{Mellin}
\end{align}
for some suitable $c_1>0$.

Now, we apply the Mellin-Barnes formula \cite[(1.44)]{DGSMellin} on $\CX=(A+B)$
\begin{align}
    \CX^{-\lambda}=(A+B)^{-\lambda}=\frac{1}{\Gamma(\lambda)}\frac{1}{2\pi i}\int_{(c_2)}\Gamma(\lambda+z)\Gamma(-z)A^{z}B^{-\lambda-z}dz.\label{MellinBarnes}
\end{align}
Applying (\ref{MellinBarnes}) into the expression (\ref{Mellin}) with $\lambda=as-b$, we get 
\begin{align}
    \frac{1}{2\pi i}\frac{1}{2\pi i}\int_{(c_2)}\int_{(c_1)}\hat V(s)Z^{-s} \frac{\Gamma(as-b+z)\Gamma(-z)}{\Gamma(as-b)} A^{z}B^{-as+b-z}.
\end{align}

Now that $A$ and $B$ are separated, we must regulate the imaginary term in the exponent of $A$ and $B$. At this moment, we denote $s=c_1+it_1$ and $z=c_2+it_2$. Then we have 
\begin{align}
&\frac{1}{4\pi^2}\int_{-\infty}^{\infty}\int_{-\infty}^{\infty}\hat V(c_1+it_1)Z^{-c_1-it_1} \frac{\Gamma((ac_1+c_2-b)+i(at_1+t_2))\Gamma(-c_2-it_2)}{\Gamma((ac_1-b)+iat_1)}\nn\\& \times A^{c_2+it_2}B^{-(ac_1+c_2-b)-i(at_1+t_2)}dt_1dt_2.
\end{align}
So, our aim is to prove that the double integral is negligibly small unless $t_1\ll N^{\varepsilon}$ and $t_2\ll N^{\varepsilon}$. 
If we look at the Mellin transform of $V$, we observe that 
$$\hat V(c_1+it_1)=\int_{0}^{\infty}x^{c_1-1+it_1}V(x) dx=\int_{0}^{\infty}x^{c_1-1}e(\frac{t_1}{2\pi}\log x)V(x)dx$$
By the first derivative bound, the integral above is negligibly small unless 
$$\frac{t_1\log x}{2\pi x}\ll N^{\varepsilon}\impliedby t_1\ll N^{\varepsilon}.$$ So, We have proved the first condition. 

By the Stirling's approximation, we have 
$$|\Gamma(c+it)|\sim \sqrt{2\pi }|t|^{c-1/2}e^{-\pi|t|/2}.$$
Then the $t_2$-integral $I_2$ is bounded by 
\begin{align}
    I_2\ll&\int_{-\infty}^{\infty}|\Gamma((ac_1+c_2-b)+i(at_1+t_2))||\Gamma(-c_2-it_2)|dt_2\nn\\
    \sim &2\pi \int_{-\infty}^{\infty} |at_1+t_2|^{ac_1+c_2-b-1/2}|t_2|^{-c_2-1/2}e^{-\frac{\pi}{2}(|at_1+t_2|+|t_2|)} dt_2.
\end{align}
We already had $t_1\ll N^{\varepsilon}$. Then if $t_2\gg N^{\varepsilon}$, we have $|t_2|\leq |at_1+t_2|\leq 2|t_2|$. 
Hence, 
\begin{align}
    I_2\ll &2\pi \int_{-\infty}^{\infty} |t_2|^{ac_1-b-1}e^{-\frac{\pi}{2}(|t_2|)} dt_2\ll N^{-A},
\end{align}
for any $A>0$. Thus, the double integral is negligibly small unless $t_1\ll N^{\varepsilon}$ and $t_2\ll N^{\varepsilon}$. 
\end{proof}
\section{Character sum analysis}\label{charsumappen}
We will follow the treatment of \cite{pal2023secondmomentdegreelfunctions}. 
We will first show that $d_q|m$ and $(\frac{m}{d_q}, v_1v_2u_1u_2)=1$. We open up the Kloosterman sums and get 
\begin{align}
    \CC=&\frac{1}{q_1q_2}\sumstar_{x_1\bmod q_1}e\left(\frac{\overline x_1\overline h_1}{q_1}\right)\sumstar_{x_2\bmod q_2}e\left(\frac{\overline x_2\overline h_2}{q_2}\right)\sum_{\beta \bmod q_1q_2}e\left(\frac{x_1\beta }{q_1}+\frac{x_2\beta }{q_2}+\frac{\beta m}{q_1q_2}\right)\nonumber\\
    =& \sumstar_{x_1\bmod q_1}e\left(\frac{\overline x_1\overline h_1}{q_1}\right)\sumstar_{x_2\bmod q_2}e\left(\frac{\overline x_2\overline h_2}{q_2}\right)\delta(x_1q_2+x_2q_1+m\equiv 0\pmod{q_1q_2}). \label{Charsum2}
\end{align}
Form the congruence relation, we note that $d_q|m$ and we have 
$$x_1u_2v_2+x_1u_1v_1\equiv -\frac{m}{d}\pmod{d_qv_1v_2u_1u_2}.$$
If we denote $\tilde m:=\frac{m}{d}$, we note that $(\tilde m, v_1v_2u_1u_2)$ because $(x_1,q_1)=1$ and $(x_2,q_2)=1$. Now, by Chinese Remainder Theorem, we can split the $\beta $-sum modulo $q_1q_2$ into three sums: $\beta_1\bmod u_1$, $\beta_2\bmod u_2$ and $\beta _3\bmod d_q^2v_1v_2$. We will also split the Kloosterman sums modulo $q_i\ (i=1,2)$ into two Kloosterman sums modulo $d_qv_i$ and $u_i$. Then the $\beta_1$-sum would be 
\begin{align*}
    &\sum_{\beta_1\bmod u_1}S(\overline h_1\overline{d_qv_1}, \beta_1\overline{d_qv_1}, u_1 )e\left(\frac{\beta_1m\overline{q_2v_1d_q}}{u_1}\right)
    = u_1e\left(-\frac{\overline m \overline h_1\overline{v_1}v_2u_2}{u_1}\right).
\end{align*}
Similarly, from the $\beta_2$-sum, we get 
\begin{align*}
    u_2e\left(-\frac{\overline m \overline h_2\overline{v_2}v_1u_1}{u_2}\right).
\end{align*}
Finally, from the $\beta _3$-sum, we get 
\begin{align*}
    &\sum_{\beta_3\bmod d_q^2v_1v_2} S(\overline h_1\overline {u_1}, \beta_3 \overline {u_1}, d_qv_1)S(\overline h_2\overline {u_2}, \beta_3 \overline {u_2}, d_qv_2)e\left(\frac{\beta_3m\overline{u_1}\overline{u_2}}{d_q^2v_1v_2}\right)\\
     =& d_q^2v_1v_2\underset{x_1v_2+x_2v_1+\tilde m\equiv0 \bmod d_qv_1v_2}{\sumstar_{x_1\bmod d_qv_1}\sumstar_{x_2\bmod d_qv_2}}e\left(\frac{\overline x_1\overline h_1u_2\overline {u_1}}{d_qv_1}\right)e\left(\frac{\overline x_2 \overline h_2u_1\overline{u_2}}{d_qv_2}\right).
\end{align*}
So we get 
\begin{align}
    \CC=e\left(-\frac{\overline m \overline h_1\overline{v_1}v_2u_2}{u_1}\right)e\left(-\frac{\overline m \overline h_2\overline{v_2}v_1u_1}{u_2}\right)\underset{x_1v_2+x_2v_1+\tilde m\equiv0 \bmod d_qv_1v_2}{\sumstar_{x_1\bmod d_qv_1}\sumstar_{x_2\bmod d_qv_2}}e\left(\frac{\overline x_1\overline h_1u_2\overline {u_1}}{d_qv_1}\right)e\left(\frac{\overline x_2 \overline h_2u_1\overline{u_2}}{d_qv_2}\right).
\end{align}
Now, by reciprocity, we can write 
$$e\left(-\frac{\overline m \overline h_1\overline{v_1}v_2u_2}{u_1}\right)=e\left(\frac{v_2u_2\overline{u_1}}{mh_1v_1}\right)e\left(-\frac{v_2u_2}{u_1v_1mh_1}\right).$$
As $\frac{v_2u_2}{u_1v_1mh_1}\ll 1$, we can second term above absorb into the smooth functions. So we write 

\begin{align}
    \CC=e\left(\frac{v_2u_2\overline{u_1}}{mh_1v_1}\right)e\left(\frac{v_1u_1\overline{u_2}}{mh_2v_2}\right)\underset{x_1v_2+x_2v_1+\tilde m\equiv0 \bmod d_qv_1v_2}{\sumstar_{x_1\bmod d_qv_1}\sumstar_{x_2\bmod d_qv_2}}e\left(\frac{\overline x_1\overline h_1u_2\overline {u_1}}{d_qv_1}\right)e\left(\frac{\overline x_2 \overline h_2u_1\overline{u_2}}{d_qv_2}\right)U(\cdots),
\end{align}
where $U(\cdots)$ is a smooth function. 
We recall that $d_q=(q_1,q_2)$. Then we can split $d_q$ into $d_q=d_0\cdot d_1\cdot d_2$ in the following manner
$$d_1=(d_q, v_1^{\infty}),\ d_2=(d_q,v_2^\infty)\text{ and }d_0=\frac{d_q}{d_1d_2}. $$
We note that $(d_0,v_1v_2)=1$. Then we can split the sum over $x_1$ and $x_2$ by CRT in the following fashion:
\begin{align}
    \sumstar_{x_1\bmod d_qv_1}(x_1)&\mapsto \sumstar_{x_{1,0}\bmod d_0}\sumstar_{x_{1,1}\bmod d_1v_1}\sumstar_{x_{1,2}\bmod d_2}\left(x_{1,0}\cdot \frac{d_qv_1}{d_0}\overline{\frac{d_qv_1}{d_0}}+x_{1,1}\cdot \frac{d_qv_1}{d_1v_1}\overline{\frac{d_qv_1}{d_1v_1}}+x_{1,2}\cdot \frac{d_qv_1}{d_2}\overline{\frac{d_qv_1}{d_2}} \right),\\
    \sumstar_{x_2\bmod d_qv_2}(x_2)&\mapsto \sumstar_{x_{2,0}\bmod d_0}\sumstar_{x_{2,1}\bmod d_1}\sumstar_{x_{2,2}\bmod d_2v_2}\left(x_{2,0}\cdot \frac{d_qv_2}{d_0}\overline{\frac{d_qv_2}{d_0}}+x_{2,1}\cdot \frac{d_qv_2}{d_1}\overline{\frac{d_qv_2}{d_1}}+x_{2,2}\cdot \frac{d_qv_2}{d_2v_2}\overline{\frac{d_qv_2}{d_2v_2}} \right).
\end{align}
With these notations, we can also split the congruence relation $$x_1v_2+x_2v_1+\tilde m\equiv0 \bmod d_qv_1v_2$$
into 
\begin{align}
    &x_{1,0}v_2+x_{2,0}v _1+\tilde m\equiv 0 \bmod{d_0}\implies x_{1,0}\equiv-\overline{v_2}(x_{2,0}v _1+\tilde m)\bmod{d_0},\\
    &x_{1,1}v_2+x_{2,1}v_1+\tilde m\equiv 0 \bmod {d_1v_1}\implies \overline{x_{1,1}}\equiv -v_2\overline{(x_{2,1}v_1+\tilde m)}\bmod {d_1v_1},\\
   \text{and }~~ &x_{1,2}v_2+x_{2,2}v_1+\tilde m\equiv 0 \bmod{d_2v_2}\implies \overline{x_{2,2}}\equiv -v_1\overline{(x_{1,2}v_2+\tilde m)}\bmod {d_2v_2}
\end{align}
We also note that $(\tilde m,v_1v_2)=1$. Then $(\tilde m, d_q)|d_0$. We recall that $\tilde m=m/d_q$. Now, we denote $d_m=(\tilde m,d_0^{\infty})$ and $\Fm=\frac{m}{d_qd_m}$. Now, we will split the character sum into the following four clusters of prime factors ($\Fm h_1h_2,~ d_0d_m,~ d_1v_1$ and $d_2v_2$):
\begin{align}
    \CC=\CC_h\cdot \CC_0\cdot \CC_1\cdot \CC_2
\end{align}
where $\CC_h$, $\CC_0$, $\CC_1$ and $\CC_2$ is stated and simplified below. We have
\begin{align}
    \CC_h=& e\left(\frac{u_2\overline{u_1}\cdot v_2 \overline{d_m d_qv_1}}{\Fm h_1}\right)e\left(\frac{u_1 \overline{u_2}\cdot v_1 \overline{d_md_qv_2}}{\Fm h_2}\right), 
\end{align}
and 
\begin{align}
    \CC_0=& e\left(\frac{u_2 \overline{u_1} v_2\overline{\frac{mh_1v_1}{d_0d_m}}}{d_0d_m}\right)e\left(\frac{u_1\overline{u_2}v_1\overline{\frac{mh_2v_2}{d_0d_m}}}{d_0d_m}\right)\underset{x_{1,0}\equiv-\overline{v_2}(x_{2,0}v _1+\tilde m)\bmod{d_0}}{\sumstar_{x_{1,0}\bmod d_0}\sumstar_{x_{2,0}\bmod d_0}} 
    e\left(\frac{\overline x_{1,0}\overline h_1u_2\overline {u_1}\overline{\frac{d_qv_1}{d_0}}}{d_0}\right)e\left(\frac{\overline x_{2,0} \overline h_2u_1\overline{u_2}\overline{\frac{d_qv_2}{d_0}}}{d_0}\right)\nonumber\\
    =&\sumstar_{x_{2,0}\bmod d_0}e\left(\frac{u_2 \overline{u_1} v_2\overline{\frac{mh_1v_1}{d_0d_m}}}{d_0d_m}\right)e\left(\frac{u_1\overline{u_2}v_1\overline{\frac{mh_2v_2}{d_0d_m}}}{d_0d_m}\right)
    e\left(-\frac{\overline{(x_{2,0}v _1+\tilde m)}v_2\overline h_1u_2\overline {u_1}\overline{\frac{d_qv_1}{d_0}}}{d_0}\right)e\left(\frac{\overline x_{2,0} \overline h_2u_1\overline{u_2}\overline{\frac{d_qv_2}{d_0}}}{d_0}\right)\nonumber\\
    =& \sumstar_{x_{0}\bmod d_0}e\left(\frac{u_2 \overline{u_1} v_2[1-d_m\overline{(x_{0}+d_m)}]\overline{\frac{mh_1v_1}{d_0d_m}}}{d_0d_m}\right)e\left(\frac{u_1\overline{u_2}v_1[1+\bar x_{0}d_m]\overline{\frac{mh_2v_2}{d_0d_m}}}{d_0d_m}\right)\nonumber\\
    =&  \sumstar_{x_{0}\bmod d_0}e\left(\frac{u_2 \overline{u_1} v_2\overline{(1+\bar x_0 d_m)}\overline{\frac{mh_1v_1}{d_0d_m}}}{d_0d_m}\right)e\left(\frac{u_1\overline{u_2}v_1[1+\bar x_{0}d_m]\overline{\frac{mh_2v_2}{d_0d_m}}}{d_0d_m}\right).
\end{align}
In the second last line, we have changed the variable from $x_{2,0}\mapsto \Fm \cdot x_{0}$. We get 
\begin{align}
    \CC_1=&e\left(\frac{v_2u_2\overline{u_1}\overline{\frac{mh_1}{d_1}}}{d_1v_1}\right)\underset{\overline{x_{1,1}}\equiv -v_2\overline{(x_{2,1}v_1+\tilde m)}\bmod {d_1v_1}}{\sumstar_{x_{1,1}\bmod d_1v_1}\sumstar_{x_{2,1}\bmod d_1}}e\left(\frac{\overline x_{1,1}\overline h_1u_2\overline {u_1}\cdot \overline{\frac{d_q}{d_1}}}{d_1v_1}\right)e\left(\frac{\overline x_{2,1} \overline h_2u_1\overline{u_2}\cdot \overline{\frac{v_2d_q}{d_1}}}{d_1}\right)\nonumber\\
    =&\sumstar_{x_{2,1}\bmod d_1}e\left(\frac{v_2u_2\overline{u_1}\overline{\frac{mh_1}{d_1}}}{d_1v_1}\right)e\left(\frac{-v_2\overline{(x_{2,1}v_1+\tilde m)}\overline h_1u_2\overline {u_1}\cdot \overline{\frac{d_q}{d_1}}}{d_1v_1}\right)e\left(\frac{\overline x_{2,1} \overline h_2u_1\overline{u_2}\cdot \overline{\frac{v_2d_q}{d_1}}}{d_1}\right)\nonumber\\
    =& \sumstar_{x_{1}\bmod d_1}e\left(\frac{v_2u_2\overline{u_1}(1-\overline{(x_{1}v_1+1)})\overline{\frac{mh_1}{d_1}}}{d_1v_1}\right)e\left(\frac{u_1\overline{u_2} \overline x_{1} \cdot \overline{\frac{mh_2v_2}{d_1}}}{d_1}\right)\nonumber\\
    =& \sumstar_{x_{1}\bmod d_1}e\left(\frac{v_2u_2\overline{u_1}\cdot \overline{(\bar x_{1}+v_1})\overline{\frac{mh_1}{d_1}}}{d_1}\right)e\left(\frac{u_1\overline{u_2} \overline x_{1} \cdot \overline{\frac{mh_2v_2}{d_1}}}{d_1}\right).
\end{align}
In the second last line, we have changed $x_{2,1}\mapsto x_{1} \tilde m $. 
Similarly, we get 
\begin{align}
    \CC_2=&  \sumstar_{x_{2}\bmod d_2}e\left(\frac{u_1\overline{u_2}\cdot v_1\cdot\overline{\bar x_2+v_2} \overline{\frac{mh_2}{d_2}}}{d_2}\right)e\left(\frac{u_2\overline{u_1} \overline x_{2} \cdot \overline{\frac{mh_1v_1}{d_2}}}{d_2}\right)
\end{align}
Hence, we can write 
\begin{align}
    \CC=\sumstar_{x_0\bmod d_0}\sumstar_{x_1\bmod d_1}\sumstar_{x_2\bmod d_2}e\left(\frac{\overline{u_1}u_2A_1}{mh_1}\right)e\left(\frac{u_1\overline{u_2}A_2}{mh_2}\right)
\end{align}
where 
\begin{align}
    A_1\equiv \begin{cases}
        v_2\overline{v_1}&\pmod {\Fm h_1}\\
        v_2\overline{v_1}\overline{[1+\bar x_0d_m]}&\pmod {d_0d_m}\\
        v_2\cdot  \overline{\bar x_1+v_1}& \pmod {d_1}\\
        \overline{v_1x_2}&\pmod {d_2}
    \end{cases}~~~~~~\text{and}~~~~~~~
     A_2\equiv \begin{cases}
        v_1\overline{v_2}&\pmod {\Fm h_2}\\
        v_1\overline{v_2}[1+\bar x_0d_m]&\pmod {d_0d_m}\\
        \overline{v_2x_1}&\pmod {d_1}\\
        v_1 \cdot \overline{\bar x_2+v_2}& \pmod {d_2}
    \end{cases}.
\end{align}
\section*{Acknowledgement}
The authors are grateful to Ritabrata Munshi for many helpful remarks and discussions.
\printbibliography
\end{document}